%
%
%
%
\documentclass{ps}
%
\usepackage{amsmath,amsfonts}
\usepackage{color}
\usepackage[applemac]{inputenc}
\usepackage{mathrsfs}
\usepackage{hyperref}
\usepackage{amssymb}
\usepackage{latexsym,graphicx}

\def \O{\mathcal{O}}

\def \1{\mbox{\textbf{1}}}

\def\leftB{[\![}
\def\rightB{]\!]}

\def\L{{\cal L}}
\newcommand \A[1]{{\bf (#1)}}

\def\O{{\cal{O}}}

\def\F{{\cal F}}

\def\bint#1^#2{\displaystyle{\int_{#1}^{#2}}}
\def\bsum#1^#2{\displaystyle{\sum_{#1}^{#2}}}

\def\xdt_#1{X_#1(\Delta t)}

\newtheorem{THM}{Theorem}[section]

\newtheorem{PROP}{Proposition}[section]

\newtheorem{LEMME}{Lemma}[section]
\newtheorem{REM}{Remark}[section]

\newcommand{\R}{\mathbb{R}}
\newcommand{\N}{\mathbb{N}}
\renewcommand{\P}{\mathbb{P}}
\newcommand{\E}{\mathbb{E}}
\renewcommand{\L}{\mathbb{L}}




\newcommand{\delimleft}[2]{\ifcase #1\or
    \bigl#2\or %
    \Bigl#2\or %
    \biggl#2\or %
    \Biggl#2\or %
    \left#2\fi}
\newcommand{\delimright}[2]{\ifcase #1\or
    \bigr#2\or %
    \Bigr#2\or %
    \biggr#2\or %
    \Biggr#2\or %
    \right#2\fi}




\DeclareMathOperator*{\Argmin}{Argmin}


\begin{document}
\title{Multi-level stochastic approximation algorithms}
\author{N. Frikha}\address{LPMA, Universit\'e Paris Diderot, Bt. Sophie Germain, 5 Rue Thomas Mann, 75205 Paris, Cedex 13, frikha@math.univ-paris-diderot.fr, \url{http://www.proba.jussieu.fr/pageperso/frikha/}}
\date{\today}
\begin{abstract} This paper studies multi-level stochastic approximation algorithms. Our aim is to extend the scope of the multilevel Monte Carlo method recently introduced by Giles \cite{Giles:08} to the framework of stochastic optimization by means of stochastic approximation algorithm. We first introduce and study a two-level method, also referred as statistical Romberg stochastic approximation algorithm. Then, its extension to multi-level is proposed. We prove a central limit theorem for both methods and describe the possible optimal choices of step size sequence. Numerical results confirm the theoretical analysis and show a significant reduction in the initial computational cost.
\end{abstract}
\subjclass{60F05, 62K12, 65C05, 60H35}
\keywords{Multi-level Monte Carlo methods, stochastic approximation, Ruppert\&Polyak averaging principle, Euler scheme}
\maketitle

\section{Introduction}
In this paper we propose and analyze a multi-level paradigm for stochastic optimization problem by means of stochastic approximation schemes. The multi-level Monte Carlo method introduced by Heinrich \cite{heinrich2001} and popularized in numerical probability by \cite{Keb:2005} and \cite{Giles:08} allows to significantly increase the computational efficiency of the expectation of an $\R$-valued non-simulatable random variable $Y$ that can only be strongly approximated by a sequence $(Y^{n})_{n\geq1}$ of easily simulatable random variables (all defined on the same probability space) as the \emph{bias parameter} $n$ goes to infinity with a \emph{weak error} or \emph{bias} $\E[Y]-\E[Y^{n}]$ of order $n^{-\alpha}$, $\alpha>0$. Let us be more specific. In this context, the standard Monte Carlo method uses the statistical estimator $M^{-1} \times \sum_{j=1}^{M} Y^{n,j}$ where the $(Y^{n,j})_{j\in \leftB 1,M\rightB}$ are $M$ independent copies of $Y^{n}$. Given the order of the weak error, a natural question is to find the optimal choice of the sample size $M$ to achieve a global error. If the weak error is of order $n^{-\alpha}$ then for a total error of order $n^{-\alpha}$ ($\alpha \in [1/2,1]$), the minimal computation necessary for the standard Monte Carlo algorithm is obtained for $M=n^{2\alpha}$, see \cite{duff:glyn:1995}. So, if the computational cost required to simulate one sample of $U^{n}$ is of order $n$ then the optimal computational cost of the Monte Carlo method is $C_{MC}=C\times n^{2\alpha+1}$, for a positive constant $C>0$.

In order to reduce the complexity of the computation, the principle of the multi-level Monte Carlo method introduced by Giles \cite{Giles:08} as a generalization of Kebaier's approach \cite{Keb:2005} consists in using the telescopic sum
$$
\E[Y^{m^{L}}] = \E[Y^{1}] + \sum_{\ell=1}^{L} \E[Y^{m^{\ell}} - Y^{^{m^{\ell-1}}}],
$$

\noindent where $m\in \N^{*}\backslash{\left\{1\right\}}$ satisfies $m^{L}=n$. For each level $\ell \in \left\{1,\cdots,L\right\}$ the numerical computation of $\E[Y^{m^{\ell}}-Y^{m^{\ell-1}}]$ is achieved by the standard Monte Carlo method with $N_{\ell}$ independent samples of $(Y^{m^{\ell-1}}, Y^{m^{\ell}})$. An important point is that the random sample $Y^{m^{\ell}}$ and $Y^{m^{\ell-1}}$ are perfectly correlated. Then the expectation $\E[Y^{n}]$ is approximated by the following multi-level estimator
$$
\frac{1}{N_0} \sum_{j=1}^{N_0} Y^{1,j} + \sum_{\ell=1}^{L} \frac{1}{N_\ell} \sum_{j=1}^{N_{\ell}}\left( Y^{m^{\ell},j} - Y^{m^{\ell-1},j} \right),
$$

\noindent where for each level $\ell$, $(Y^{m^{\ell},j})_{j\in \leftB1, N_{\ell}\rightB}$ is a sequence of i.i.d. random variables with the same law as $Y^{m^{\ell}}$. 

Based on an analysis of the variance, Giles \cite{Giles:08} proposed an optimal choice for the sequence $(N_{\ell})_{1 \leq \ell \leq L}$ which minimizes the total complexity of the algorithm. More recently, Ben Alaya and Kebaier \cite{ben:keb:12} proposed a different analysis to obtain the optimal choice of the parameters relying on a Lindeberg Feller central limit theorem (CLT) for the multi-level Monte Carlo algorithm. To obtain a global error of order $n^{-\alpha}$, both approaches allow to achieve a complexity of order $n^{2\alpha} (\log n)^2$ if the $L^{2}(\P)$ strong approximation rate $\E|U^{n}-U|^2]$ of $U$ by $U^{n}$ is of order $1/n$. Hence, the multi-level Monte Carlo method is significantly more effective than the crude Monte Carlo and the statistical Romberg methods. Originally introduced for the computation of expectations involving stochastic differential equation (SDE), it has been widely applied to various problems of numerical probability, see Giles \cite{Giles:08b}, Dereich \cite{Dereich11}, Giles, Higham and Mao \cite{Giles:09} among others. We refer the interested reader to the webpage: \url{http://people.maths.ox.ac.uk/gilesm/mlmc_community.html}.

In the present paper, we are interested in broadening the scope of the multi-level Monte Carlo method to the framework of stochastic approximation (SA) algorithm. Introduced by Robbins and Monro \cite{Robbins1951}, these recursive simulation based algorithms appear as effective and widely used procedures to solve inverse problems. To be more specific, their aim is to find a zero of a continuous function $h:\R^{d} \rightarrow \R^{d}$ which is unknown to the experimenter but can only be estimated through experiments. Successfully and widely investigated from both a theoretical and applied point of view since this seminal work, such procedures are now commonly used in various contexts such as convex optimization since minimizing a function amounts to finding a zero of its gradient. In the general Robbins-Monro procedure, the function $h$ writes $h(\theta):=\E[H(\theta,U)]$ where $H:\R^d\times \R^q \rightarrow \R^d$ and $U$ is an $\R^q$-valued random vector. To estimate the zero of $h$, they proposed the algorithm
\begin{equation}
\label{RMorigine}
\theta_{p+1} = \theta_p - \gamma_{p+1} H(\theta_p,U^{p+1}), \ \ p\geq0
\end{equation}

\noindent where $(U^{p})_{p\geq1}$ is an i.i.d. sequence of copies of $U$ defined on a probability space $(\Omega, \mathcal{F}, \mathbb{P})$, $\theta_0$ is independent of the innovation of the algorithm with $\E|\theta_0|^2<+\infty$ and $\gamma=(\gamma_{p})_{p \geq 1}$ is a sequence of non-negative deterministic and decreasing steps satisfying the assumption
\begin{equation}
\label{STEP}
\sum_{p \geq 1} \gamma_{p} = + \infty, \ \ \mbox{and} \ \ \sum_{p\geq1} \gamma_{p}^{2} < + \infty.
\end{equation} 

\noindent When the function $h$ is the gradient of a convex potential, the recursive procedure \eqref{RMorigine} is a stochastic gradient algorithm. Indeed, replacing $H(\theta_p, U^{p+1})$ by $h(\theta_p)$ in \eqref{RMorigine} leads to the usual deterministic descent gradient procedure. When $h(\theta)=k(\theta)-\ell$, $\theta \in \mathbb{R}$, where $k$ is a monotone function, say increasing, which writes $k(\theta)=\E[K(\theta,U)]$, $K: \mathbb{R} \times \mathbb{R}^{q} \rightarrow \mathbb{R}$ being a Borel function and $\ell$ a given desired level, then setting $H=K-\ell$, the recursive procedure \eqref{RMorigine} aims to compute the value $\bar{\theta}$ such that $k(\bar{\theta})=\ell$. 

As in the case of the Monte Carlo method described above, the random vector $U$ is not directly simulatable (at a reasonable cost) but can only be approximated by another sequence of easily simulatable random vectors $((U^{n})^{p})_{p\geq1}$, which strongly approximates $U$ as $n\rightarrow +\infty$ with a standard weak discretization error (or bias) $\E[f(U)]-\E[f(U^{n})]$ of order $n^{-\alpha}$ for a specific class of functions $f\in \mathcal{C}$. The computational cost required to simulate one sample of $U^{n}$ is of order $n$ that is $Cost(U^{n})=K\times n$ for some positive constant $K$. One standard situation corresponds to the case of a discretization of an SDE by means of an Euler-Maruyama scheme with $n$ time steps.  

Some typical applications are the computations of the implied volatility or the implied correlation which both boil down to finding the zero of a function which writes as an expectation. Computing the Value-at-Risk and the Conditional Value-at-Risk of a financial portfolio when the dynamics of the underlying assets are given by an SDE also appears as an inverse problem for which a SA scheme may be devised, see e.g. \cite{bar:fri:pag:09, bar:fri:pag:09:2}. The risk minimization of a financial portfolio by means of SA has been investigated in \cite{bar:fri:pag:10, fri:14}. For more applications and a complete overview in the theory of stochastic approximation, the reader may refer to \cite{Duflo1996}, \cite{Kushner2003} and \cite{ben:met:pri}.

The important point here is that the function $h$ is generally neither known nor computable (at least at reasonable cost) and since the random variable $U$ cannot be simulated, estimating $\theta^*$ using the recursive scheme \eqref{RMorigine} is not possible. Therefore, two steps are needed to compute $\theta^*$:
\begin{trivlist}
\item[-] the first step consists in approximating the zero $\theta^*$ of $h$ by the zero $\theta^{*,n}$ of $h^{n}$ defined by $h^{n}(\theta):=\E[H(\theta, U^{n})]$, $\theta \in \R^d$. It induces \emph{an implicit weak error} which writes 
$$
\mathcal{E}_{D}(n):= \theta^{*}-\theta^{*,n}.
$$ 

\noindent Let us note that $\theta^{*,n}$ appears as a proxy of $\theta^*$ and one would naturally expect that $\theta^{*,n}\rightarrow \theta^{*}$ as the bias parameter $n$ tends to infinity.


\item[-] the second step consists in approximating $\theta^{*,n}$ by $M\in \N^{*}$ steps of the following SA scheme
\begin{equation}
\label{RM}
\theta^{n}_{p+1} = \theta^{n}_{p} - \gamma_{p+1} H(\theta^{n}_{p}, (U^{n})^{p+1}), \ p \in \leftB 0, M-1\rightB,
\end{equation}

\noindent where $((U^{n})^{p})_{p\in \leftB1,M\rightB}$ is an i.i.d. sequence of random variables with the same law as $U^{n}$, $\theta^{n}_0$ is independent of the innovation of the algorithm with $\sup_{n\geq1}\E[|\theta^{n}_0|^2]<+\infty$ and $\gamma=(\gamma_{p})_{p \geq 1}$ is a sequence of non-negative deterministic and decreasing steps satisfying \eqref{STEP}.
This induces a \emph{statistical error} which writes 
$$
\mathcal{E}_S(n, M, \gamma):= \theta^{*,n} - \theta^{n}_{M}.
$$
\end{trivlist}

The global error between $\theta^{*}$, the quantity to estimate, and its implementable approximation $\theta^{n}_M$ can be decomposed as follows:
\begin{align*}
\mathcal{E}_{glob}(n, M, \gamma) & = \theta^{*} - \theta^{*,n} + \theta^{*,n}- \theta^{n}_M\\
& := \mathcal{E}_{D}(n) + \mathcal{E}_S(n, M, \gamma).
\end{align*} 

The first step of our analysis consists in investigating the behavior of the \emph{implicit weak error} $\mathcal{E}_{D}(n)$. Under mild assumptions on the functions $h$ and $h^{n}$, namely the local uniform convergence of $(h^{n})_{n\geq1}$ towards $h$ and a mean reverting assumption of $h$ and $h^{n}$, we prove that $\lim_n \mathcal{E}_{D}(n) = 0$. We next show that under additional assumption, namely the local uniform convergence of $(Dh^{n})_{n\geq1}$ towards $Dh$ and the non-singularity of $Dh(\theta^*)$, the rate of convergence of the \emph{standard weak error} $h^{n}(\theta)-h(\theta)$, for a fixed $\theta \in \R^d$, transfers to the \emph{implicit weak error} $\mathcal{E}_{D}(n) = \theta^*-\theta^{*,n}$.  

Regarding the \emph{statistical error} $\mathcal{E}_S(n, M, \gamma):= \theta^{*,n}- \theta^{n}_M$, it is well-known that under standard assumptions, i.e. a mean reverting assumption on $h^{n}$ and a growth control of the $L^{2}(\P)$-norm of the noise of the algorithm, the Robbins-Monro theorem guarantees that $\lim_{M}\mathcal{E}_S(n, M, \gamma)=0$ for each fixed $n\in \N^{*}$, see Theorem \ref{RM:THM} below. Moreover, under mild technical conditions, a CLT holds at rate $\gamma^{-1/2}(M)$, that is, for each fixed $n\in\N^*$, $\gamma^{-1/2}(M) \mathcal{E}_S(n, M, \gamma)$ converges in distribution to a normally distributed random variable with mean zero and finite covariance matrix, see Theorem \ref{CLT_SA} below. The reader may also refer to \cite{fri:men:12, fat:fri:13} for some recent developments on non-asymptotic deviation bounds for the statistical error. In particular if we set $\gamma(p)=\gamma_0/p$, $\gamma_0 >0$, $p\geq1$, the weak convergence rate is $\sqrt{M}$ provided that $ 2\mathcal{R}e(\lambda_{min}) \gamma_0 > 1$ where $\lambda_{min}$ denotes the eigenvalue of $Dh(\theta^*)$ with the smallest real part. However, this local condition on the Jacobian matrix of $h$ at the equilibrium is difficult to handle in practical situation. 

To circumvent such a difficulty, it is fairly well-known that the key idea is to carefully smooth the trajectories of a converging SA algorithm by averaging according to the \emph{Ruppert \& Polyak averaging principle}, see e.g. \cite{Ruppert1991, Polyak1992}. It consists in devising the original SA algorithm \eqref{RM} with a slow decreasing step and to simultaneously compute the empirical mean $(\bar{\theta}^{n}_{p})_{p\geq1}$ (which $a.s.$ converges to $\theta^{*,n}$) of the sequence $(\theta^{n}_{p})_{p\geq0}$ by setting
\begin{align}
\label{RM_AV_int}
\bar{\theta}^{n}_{p} & = \frac{\theta^{n}_0 + \theta^{n}_1+ \cdots + \theta^{n}_p}{p+1} = \bar{\theta}^{n}_{p-1} - \frac{1}{p+1}\left(\bar{\theta}^{n}_{p-1}-\theta^{n}_{p}\right).
\end{align}

The statistical error now writes $\mathcal{E}_S(n, M, \gamma):= \theta^{*,n} - \bar{\theta}^{n}_M$ and under mild assumptions a CLT holds at rate $\sqrt{M}$ without any stringent condition on $\gamma_0$.

Given the order of the implicit weak error and a step sequence $\gamma$ satisfying \eqref{STEP} a natural question is to find the optimal balance between the value of $n$ and the number $M$ of steps in \eqref{RM} in order to achieve a given global error. This problem was originally investigated in \cite{duff:glyn:1995} for the standard Monte Carlo method. The error between $\theta^*$ and the approximation $\theta^{n}_M$ writes $\theta^{n}_{M} - \theta^{*} = \theta^{n}_{M} - \theta^{*,n} + \theta^{*,n} - \theta^{*}$ suggesting to select $M=\gamma^{-1}(1/n^{2\alpha})$, where $\gamma^{-1}$ is the inverse function of $\gamma$, when the \emph{weak error} is of order $n^{-\alpha}$. However, due to the non-linearity of the SA algorithm \eqref{RM}, the methodology developed in \cite{duff:glyn:1995} does not apply in our context. The key tool to tackle this question consists in linearizing the dynamic of $(\theta^{n}_p)_{p \in \leftB1,M\rightB}$ around its target $\theta^{*,n}$, quantifying the contribution of the non linearities in the space variable $\theta^{n}_{p}$ and the innovations and finally exploiting stability arguments from SA schemes. Optimizing with respect to the usual choice of the step sequence, the minimal computational cost (to achieve an error of order $n^{-\alpha}$) given by $C_{\text{SA}} = K \times n \times \gamma^{-1}(1/n^{2\alpha})$ is reached by setting $\gamma(p)=\gamma_0/p$, $p\geq1$, provided that the constant $\gamma_0$ satisfies a stringent condition involving $h^{n}$, leading to a complexity of order $n^{2\alpha+1}$. Considering the empirical mean sequence $(\bar{\theta}^{n}_{p})_{p \in \leftB1,n^{2\alpha} \rightB}$ instead of the crude SA estimate also allows to reach the optimal complexity for free without any condition on $\gamma_0$. 

To increase the computational efficiency for the estimation of $\theta^*$ by means of SA algorithm, we investigate in a second part multi-level SA algorithms. The first one is a two-level method, also referred as the statistical Romberg SA method. It consists in approximating the unique zero $\theta^*$ of $h$ by $\Theta^{sr}_n = \theta^{n^{\beta}}_{M_1} + \theta^{n}_{M_2} - \theta^{n^{\beta}}_{M_2}$, $\beta \in (0,1)$. The couple $(\theta^{n}_{M_2}, \theta^{n^{\beta}}_{M_2})$ is computed using $M_2$ independent copies of $(U^{n},U^{2n})$. Moreover the random samples used to obtain $\theta^{n^{\beta}}_{M_1}$ are independent of those used for the computation of $(\theta^{n}_{M_2}, \theta^{n^{\beta}}_{M_2})$. For an implicit weak error of order $n^{-\alpha}$, we prove a CLT for the sequence $(\Theta^{sr}_n)_{n\geq1}$ through which we are able to optimally set $M_1$, $M_2$ and $\beta$ with respect to $n$ and the step sequence $\gamma$. The intuitive idea is that when $n$ is large, $(\theta^{n}_{p})_{p\in \leftB0,M_2\rightB}$ and $(\theta^{n^{\beta}}_{p})_{p\in \leftB0,M_2\rightB}$ are close to the SA scheme $(\theta_{p})_{p\in \leftB0,M_2\rightB}$ devised with the innovation variables $(U^{p})_{p\geq1}$ so that the correction term writes $ \theta^{n}_{M_2} -\theta_{M_2} - (\theta^{n^{\beta}}_{M_2}- \theta_{M_2})$. Then we quantify the two main contributions in this decomposition, namely the one due to the non linearity in the space variables $(\theta^{n^{\beta}}_p,\theta^{n}_p, \theta_p)_{p \in \leftB0,M_2\rightB}$ and the one due to the non linearity in the innovation variables $(U^{n^{\beta},p}, U^{n,p},  U^{p})_{p\geq1}$. Under mild smoothness assumption on the function $H$, the weak rate of convergence is ruled by the non linearity in the innovation variables for which we use the weak convergence of the normalized error $n^{\rho}(U^{n}-U)$, $\rho \in (0,1/2]$. The optimal choice of the step sequence is again $\gamma_p = \gamma_0/p$, $p\geq1$ and induces a complexity for the procedure given by $C_{\text{SA-SR}}=K\times n^{2\alpha + 1/(1+\rho)}$, provided that $\gamma_0$ satisfies again a condition involving $h^{n}$ which is difficult to handle in practice. By considering the empirical mean sequence $\bar{\Theta}^{sr}_n = \bar{\theta}^{n^{\beta}}_{M_3} + \bar{\theta}^{n}_{M_4} - \bar{\theta}^{n^{\beta}}_{M_4},$ where $(\bar{\theta}^{n^{\beta}}_{p})_{p\in \leftB 0, M_3 \rightB}$ and $(\bar{\theta}^{n}_{p},\bar{\theta}^{n^{\beta}}_{p})_{p\in \leftB 0, M_4 \rightB}$ are respectively the empirical means of the sequences $(\theta^{n^{\beta}}_{p})_{p\in \leftB 0,M_3 \rightB}$ and $(\theta^{n}_{p},\theta^{n^{\beta}}_{p})_{p \in \leftB 0, M_4 \rightB}$ devised with the same slow decreasing step sequence, this optimal complexity is reached for free by setting $M_3=n^{2\alpha}$, $M_4=n^{2\alpha-1/(1+\rho)}$ without any condition on $\gamma_0$.

Moreover, we generalize this approach to the case of multi-level SA method. In the spirit of \cite{Giles:08} for Monte Carlo path simulation, the multi-level SA scheme estimates $\theta^{*,n}$ by computing the quantity $\Theta^{ml}_{n} = \theta^{1}_{M_0} + \sum_{\ell=1}^{L} \theta^{m^{\ell}}_{M_\ell} - \theta^{m^{\ell -1}}_{M_\ell}$ where for every $\ell$, the couple $(\theta^{m^{\ell}}_{M_\ell}, \theta^{m^{\ell -1}}_{M_\ell})$ is obtained using $M_\ell$ independent copies of $(U^{m^{\ell-1}}, U^{m^{\ell}})$. Here again to establish a CLT for this estimator (in the spirit of \cite{ben:keb:12} for the Monte Carlo path simulation), our analysis follows the lines of the methodology developed so far. The optimal computational cost to achieve an accuracy of order $1/n$ is reached by setting $M_0 = \gamma^{-1}(1/n^{2})$, $M_{\ell}=\gamma^{-1}(m^{\ell}\log(m)/(n^2 \log(n) (m-1)))$, $\ell=1, \cdots, L$ in the case $\rho=1/2$. Once again the step sequence $\gamma(p)=\gamma_0/p$, $p\geq1$, is optimal among the usual choices of step sequence and it induces a complexity for the procedure given by $C_{\text{SA-ML}} = K \times n^{2} (\log(n))^2$. We thus recover the rates as in the multi-level Monte Carlo path simulation for SDE obtained in \cite{Giles:08} and \cite{ben:keb:12}.

The paper is organized as follows. In the next section we state our main results and list the assumptions. Section \ref{proof:sec} is devoted to the proofs. In Section \ref{num:res:sec} numerical results are presented to confirm the theoretical analysis. Finally, Section \ref{technical:res:sec} is devoted to technical results which are useful throughout the paper.


\section{Main results}\label{gen:frame:sec}
In the present paper, we make no attempt to provide an exhaustive discussion related to convergence results of SA schemes. We refer the interested readers to \cite{Duflo1996}, \cite{Kushner2003} and \cite{ben:met:pri} among others for developments and a more complete overview in SA theory. In the next section, we first recall some basic facts concerning stable convergence (following the notations of \cite{Jac:Prot:98}) and list classical results of SA theory. 

\subsection{Preliminaries}
%
%
%
%

For a sequence of $E$-valued ($E$ being a Polish space) random variables $(X_n)_{n\geq1}$ defined on a probability space $(\Omega, \mathcal{F}, \P)$, we say that $(X_n)_{n\geq1}$ converges in law stably to $X$ defined on an extension $(\tilde{\Omega}, \tilde{\mathcal{F}}, \tilde{\P})$ of $(\Omega, \mathcal{F}, \P)$ and write $X_n\overset{stably }{\Longrightarrow} X$, if for all bounded random variable $U$ defined on $(\Omega, \mathcal{F}, \P)$  and for all $h: E \rightarrow \R$ bounded continuous, one has
$$
\E[U h(X_n)] \rightarrow \tilde{\E}[Uh(X)], \ \ n\rightarrow + \infty.
$$

This convergence is obviously stronger than convergence in law that we denote by ``$\Longrightarrow$''. Stable convergence was introduced in \cite{ren:63} and notably investigated in \cite{ald:eagl:78}. The following lemma is a basic result on stable convergence that will be useful throughout the paper. We refer to \cite{Jac:Prot:98}, Lemma 2.1 for a proof. Here, $E$ and $F$ will denote two Polish spaces. We consider a sequence $(X_n)_{n\geq1}$ of $E$-valued random variable defined on $(\Omega, \F)$.
\begin{LEMME}
\label{lemme:conv:stab}
Let $(Y_n)_{n\geq1}$ be a sequence of $F$-valued random variable defined on $(\Omega, \F)$ satisfying
$$
 Y_n \overset{\P}{\longrightarrow} Y
$$

\noindent where $Y$ is defined on $(\Omega, \F)$. If $X_n \overset{stably}{\Longrightarrow} X$ where $X$ is defined on an extension of $(\Omega, \F)$ then, we have
$$
(X_n,Y_n) \overset{stably }{\Longrightarrow} (X,Y).
$$ 

Let us note that this result remains valid when $Y_n=Y$, for all $n\geq1$
\end{LEMME}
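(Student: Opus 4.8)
The plan is to verify the defining property of stable convergence for the pair $(X_n,Y_n)$ directly: for every bounded random variable $U$ on $(\Omega,\F)$ and every bounded continuous $g:E\times F\to\R$, I want to show $\E[U g(X_n,Y_n)]\to\tilde{\E}[U g(X,Y)]$. The natural first move is to insert $Y$ in place of $Y_n$ and split
\[
\E[U g(X_n,Y_n)] = \E\big[U\,(g(X_n,Y_n)-g(X_n,Y))\big] + \E[U g(X_n,Y)],
\]
so that it suffices to treat the two terms separately: show the first is negligible, and identify the limit of the second, which is exactly the asserted special case $Y_n\equiv Y$.

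For the second term I would first reduce to product test functions $g(x,y)=h(x)k(y)$ with $h,k$ bounded continuous. For such $g$ one has $\E[U h(X_n)k(Y)] = \E[(U k(Y))\,h(X_n)]$, and since $U k(Y)$ is again a bounded random variable on $(\Omega,\F)$, the hypothesis $X_n\overset{stably}{\Longrightarrow}X$ applied with this new weight gives convergence to $\tilde{\E}[(U k(Y))h(X)] = \tilde{\E}[U g(X,Y)]$. To pass from products to an arbitrary bounded continuous $g$ I would invoke tightness: since $E,F$ are Polish and $(X_n)$ converges in law (stable convergence implies convergence in law by taking $U=1$), while the law of $Y$ is a single Radon measure, the laws of $(X_n,Y)$ are tight together with the law of $(X,Y)$ on the extension; on a large compact set $g$ can be uniformly approximated by finite linear combinations of products $h\otimes k$ via Stone--Weierstrass, and the tightness controls the error outside the compact set. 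This yields $\E[U g(X_n,Y)]\to\tilde{\E}[U g(X,Y)]$, which is also precisely the final assertion of the lemma.

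For the first term it suffices, since $U$ and $g$ are bounded, to prove $g(X_n,Y_n)-g(X_n,Y)\to 0$ in probability and then conclude by dominated convergence. Here I would again use tightness: fix $\varepsilon,\delta>0$, choose compacts $K_E\subset E$ and $K_F\subset F$ with $\P(X_n\notin K_E)$, $\P(Y_n\notin K_F)$, $\P(Y\notin K_F)$ all $<\delta$ uniformly in $n$, and use that $g$ is uniformly continuous on $K_E\times K_F$ to obtain $\eta>0$ with $|g(x,y)-g(x,y')|<\varepsilon$ whenever $(x,y),(x,y')\in K_E\times K_F$ and $d_F(y,y')<\eta$. On the event where $X_n\in K_E$, $Y_n,Y\in K_F$ and $d_F(Y_n,Y)<\eta$ the increment is below $\varepsilon$ (the first argument being identical in both evaluations), so
\[
\P(|g(X_n,Y_n)-g(X_n,Y)|>\varepsilon)\le 3\delta + \P(d_F(Y_n,Y)\ge\eta),
\]
and the last term tends to $0$ because $Y_n\overset{\P}{\longrightarrow}Y$. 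Letting $n\to\infty$ then $\delta\to0$ gives the convergence in probability.

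The main obstacle is the approximation step in the second term: the test function $g$ is merely bounded continuous on the non-compact Polish space $E\times F$, so the reduction to product functions is not immediate and must be carried out carefully through a joint tightness argument combined with Stone--Weierstrass on compacts, keeping track of the interplay between the weak limit living on the extension and the original-space weight $U$. By contrast, the convergence-in-probability estimate for the first term is routine once uniform continuity on compacts and the tightness of $(X_n)$ are in hand.
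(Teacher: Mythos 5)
The paper offers no proof of this lemma: it defers entirely to \cite{Jac:Prot:98}, Lemma 2.1. Your proposal is essentially the classical argument behind that reference: split $\E[Ug(X_n,Y_n)]$ into $\E[U(g(X_n,Y_n)-g(X_n,Y))]$ plus $\E[Ug(X_n,Y)]$, kill the first term by tightness and uniform continuity on compacts (your estimate here is complete and correct; tightness of $(Y_n)$ follows from $Y_n\overset{\P}{\to}Y$ via Prokhorov), and for the second term exploit the key point that $U\,k(Y)$ is again an admissible bounded weight on $(\Omega,\F)$ --- which is precisely what stability buys over mere convergence in law; without it the conclusion is false, as the example $Y\sim\mathcal{N}(0,1)$, $X_n=(-1)^nY$ shows, since then $X_n\Longrightarrow\mathcal{N}(0,1)$ but $(X_n,Y)$ has two subsequential joint laws.

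The one step you leave schematic --- and flag yourself as the main obstacle --- does contain a genuine quantifier circularity as written: the Stone--Weierstrass approximant $\phi=\sum_i h_i\otimes k_i$ is chosen \emph{after} the compact $K_E\times K_F$, and its global sup norm $\|\phi\|_\infty$ is not controlled by $\|g\|_\infty$; hence the off-compact error $(\|g\|_\infty+\|\phi\|_\infty)\,\P\left((X_n,Y)\notin K_E\times K_F\right)$ cannot be shrunk by taking larger compacts, since enlarging them changes $\phi$. Two standard repairs exist. First, within your route: the finite sums of products form an algebra, so you may replace $\phi$ by $P(\phi)$ where $P$ is a polynomial approximating the truncation $t\mapsto(t\wedge(\|g\|_\infty+\varepsilon))\vee(-\|g\|_\infty-\varepsilon)$ uniformly on $[-\|\phi\|_\infty,\|\phi\|_\infty]$; then $P(\phi)$ is still in the algebra, still $2\varepsilon$-approximates $g$ on the compact, and is now globally bounded by $\|g\|_\infty+2\varepsilon$, which closes your estimate. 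Second, and closer to the argument in the cited literature, one can avoid Stone--Weierstrass altogether by discretizing $Y$: cover $K_F$ by finitely many balls $B(y_j,\eta)$, set $Y^{\eta}$ equal to the nearest center, write $\E[Ug(X_n,Y^{\eta})]=\sum_j\E\left[(U\1_{A_j})\,g(X_n,y_j)\right]$ with $A_j=\{Y^{\eta}=y_j\}$, and apply the stable convergence hypothesis to each bounded weight $U\1_{A_j}$ and each bounded continuous $g(\cdot,y_j)$; the discretization error is then controlled exactly as in your first-term estimate. In short: right approach, all details correct except one, and that one is a known wrinkle with a routine fix.
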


We illustrate this notion by the Euler-Maruyama discretization scheme of a diffusion process $X$ solution of an SDE. The following results will be useful in the sequel in order to illustrate multi-level SA methods. We first introduce some notations, namely for $x\in \R^q$
$$
f(x) = \begin{pmatrix}
b_1(x) & \sigma_{11}(x) & \cdots & \sigma_{1q'}(x) \\
b_2(x) & \sigma_{21}(x) & \cdots & \sigma_{2 q'}(x) \\
\vdots      & \vdots                    & \cdots  & \vdots \\
b_q(x) & \sigma_{q1}(x) & \cdots & \sigma_{q q'}(x)
\end{pmatrix}
$$

\noindent and $dY_t = (dt \  dW^{1}_t \  \cdots \  dW^{q'}_t)^{T}$ where $b:\R^q\rightarrow \R^q$, $\sigma:\R^q\rightarrow \R^q\times \R^{q'}$. Here as below $u^T$ denotes the transpose of the vector $u$. The dynamic of $X$ will be written in the compact form
$$
\forall t \in [0,T], \ X_t = x + \int_0^t f(X_s) dY_s
$$

\noindent with its Euler-Maruyama scheme with time step $\Delta=T/n$ 
\begin{equation*}
X^{n}_t = x+ \int_0^t f(X^{n}_{\phi_n(s)}) dY_s.
\end{equation*}

We introduce the following smoothness assumption on the coefficients:
\begin{trivlist}
\item[\A{HS}] The coefficients $b, \sigma$ are uniformly Lipschitz continuous.
\item[\A{HD}] The coefficients $b, \sigma$ satisfy \A{HS} and are continuously differentiable. 
\end{trivlist}

The following result is due to \cite{Jac:Prot:98}, Theorem 3.2 p.276 and Theorem 5.5, p.293.

\begin{THM}\label{weak:conv:euler}Assume that \A{HD} holds. Then, the process $V^{n}:=X^{n}-X$ satisfies
$$
\sqrt{\frac{n}{T}} V^{n} \overset{stably }{ \Longrightarrow} V, \ \ as \ \ n\rightarrow + \infty
$$

\noindent the process $V$ being defined by $V_0=0$ and
\begin{equation}
dV^{i}_t = \sum_{j=1}^{q'+1} \sum_{k=1}^{q} f^{'ij}_k(X_t) \left[ V^{k}_t dY^{j}_t - \sum_{\ell=1}^{q'+1} f^{k\ell}(X_t) dZ^{\ell j}_t\right]
\label{PROC:U}
\end{equation}

\noindent where $f^{'ij}_k$ is the $k$th partial derivative of $f^{ij}$ and 
\begin{align*}
\forall (i,j) \in \leftB 2, q'+1 \rightB \times \leftB 2, q'+1\rightB, \ Z^{ij}_t & = \frac{1}{\sqrt{2}} \sum_{1 \leq k, \ell \leq q} \int_0^t \sigma^{ik}(X_s) \sigma^{j \ell}(X_s) dB^{k \ell}_s, \\
\forall j \in \leftB1, q'+1 \rightB, \  Z^{1j} & = 0, \\
\forall i \in \leftB1, q'+1 \rightB, Z^{i1}    &  = 0,
\end{align*}

\noindent where $B$ is a standard $(q')^2$-dimensional Brownian motion defined on an extension $(\tilde{\Omega} , \tilde{\F},(\tilde{\F}_t)_{t\ge 0},\tilde{\P})$ of $(\Omega,\F,(\F_t)_{t\ge 0},\P)$ and independent of $W$.
\end{THM}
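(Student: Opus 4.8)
The plan is to view the normalized error $U^{n}:=\sqrt{n/T}\,V^{n}$ as the solution of a linear SDE perturbed by vanishing terms, and to isolate the single contribution that survives the normalization and drives the stable limit. Subtracting the two integral equations defining $X^{n}$ and $X$ gives
$$
V^{n}_t = \int_0^t \big[ f(X^{n}_{\phi_n(s)}) - f(X_s) \big]\, dY_s,
$$
and I would split the integrand along the frozen-coefficient decomposition
$$
f(X^{n}_{\phi_n(s)}) - f(X_s) = \big[ f(X^{n}_{s}) - f(X_s) \big] + \big[ f(X^{n}_{\phi_n(s)}) - f(X^{n}_s) \big].
$$
Under \A{HD} the coefficient $f$ is $C^{1}$, so a first-order Taylor expansion turns the first bracket into $Df(X_s)\,V^{n}_s + o(V^{n}_s)$; after multiplication by $\sqrt{n/T}$ this is the \emph{stability} part and, in the limit, it yields the linear feedback term $\sum_{j,k} f^{'ij}_k(X_t)\,V^{k}_t\, dY^{j}_t$ of \eqref{PROC:U}. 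The second bracket is the \emph{discretization} part and carries the genuine fluctuations.

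To treat the discretization part I would use that on each interval $[\phi_n(s),s]$ the map $\phi_n$ is constant, so that $X^{n}_s - X^{n}_{\phi_n(s)} = f(X^{n}_{\phi_n(s)})\,(Y_s - Y_{\phi_n(s)})$, and a first-order expansion of $f$ gives
$$
\big[ f(X^{n}_{\phi_n(s)}) - f(X^{n}_s) \big]^{ij} \approx - \sum_{k,\ell} f^{'ij}_k(X^{n}_{\phi_n(s)})\, f^{k\ell}(X^{n}_{\phi_n(s)})\, \big( Y^{\ell}_s - Y^{\ell}_{\phi_n(s)} \big).
$$
Integrating against $dY^{j}_s$ and normalizing by $\sqrt{n/T}$ then produces, for each pair of Brownian indices, the normalized iterated integral
$$
\sqrt{\tfrac{n}{T}} \int_0^t \big( Y^{\ell}_s - Y^{\ell}_{\phi_n(s)} \big)\, dY^{j}_s,
$$
whose stable limit is the process $Z$ of the statement. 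The diagonal indices yield predictable compensators converging to ordinary time integrals, while the off-diagonal parts survive as genuine martingale contributions; this is exactly why $Z$ is driven by the auxiliary Brownian motion $B$, independent of $W$, the normalizing factor $1/\sqrt{2}$ reflecting that $\mathbb{E}\big[(\int_{\phi_n(s)}^s (W_u - W_{\phi_n(s)})\,dW_u)^2\big]$ integrates to a quantity of order $T^2/(2n)$.

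The main obstacle is precisely this stable functional central limit theorem for the normalized iterated integrals, jointly with $X$, $X^{n}$ and the driving $W$. I would cast the relevant quantities as a sequence of local martingales and invoke a stable martingale limit theorem: the hypotheses to verify are the convergence of the predictable brackets $\langle \cdot, \cdot \rangle$ to those of $Z$, the asymptotic orthogonality of these martingales with $W$ (which forces the limit $B$ to be independent of $W$ and upgrades the convergence to \emph{stable} convergence), and a Lindeberg-type negligibility condition, all of which follow from \A{HD} together with standard $L^{p}$ moment bounds on $X$, $X^{n}$ and $V^{n}=X^{n}-X$ (the latter of order $n^{-1/2}$). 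Finally, collecting the stable limits of the input data and using Lemma \ref{lemme:conv:stab} to combine them with the almost sure convergence $X^{n}\to X$, I would pass to the limit in the linear equation solved by $U^{n}$ by a stability (continuity) argument for SDEs with respect to their coefficients and driving terms; since \eqref{PROC:U} is a linear equation with Lipschitz coefficients it admits a unique solution, which identifies the stable limit of $\sqrt{n/T}\,V^{n}$ with $V$.
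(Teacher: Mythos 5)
You should know that the paper itself offers no proof of this statement---it is quoted directly from Jacod and Protter (1998), Theorems 3.2 and 5.5---and your sketch faithfully reconstructs the architecture of that proof: the splitting of the error into a linearized feedback part plus a discretization part, the Taylor expansion on each mesh interval producing the normalized iterated integrals $\sqrt{n/T}\int_0^t (Y^{\ell}_s - Y^{\ell}_{\phi_n(s)})\,dY^{j}_s$, the stable martingale CLT for these (convergence of brackets, asymptotic orthogonality to $W$ forcing the independence of $B$, Lindeberg negligibility, with the bracket computation correctly accounting for the factor $1/\sqrt{2}$), and the identification of the limit as the unique solution of the linear equation \eqref{PROC:U}. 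The only step you state loosely is the final passage to the limit: the solution map of an SDE is \emph{not} continuous under mere stable convergence in law of the driving terms, and the rigorous version of your ``stability argument'' is the Kurtz--Protter uniform-tightness (``good sequence'') criterion for weak convergence of stochastic integrals, which is precisely the tool Jacod and Protter invoke and which holds here because the normalized integrators are martingales with uniformly controlled brackets (plus finite-variation drifts).
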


We will also use the following result which is due to \cite{ben:keb:12}, Theorem 4. 
\begin{THM}\label{weak:conv:euler:2step}Let $m\in \N^{*}\backslash{\left\{1\right\}}$. Assume that \A{HD} holds. Then, we have
$$
\sqrt{\frac{m^{\ell}}{(m-1)T}} (X^{m^{\ell}}-X^{m^{\ell-1}}) \overset{stably }{ \Longrightarrow} V, \ \ as \ \ \ell\rightarrow + \infty.
$$
\end{THM}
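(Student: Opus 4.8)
The plan is to reduce the statement to the single-scheme stable CLT of Theorem~\ref{weak:conv:euler} by analysing the coupled system driving the difference process and isolating the one martingale whose bracket dictates the limit. Writing $D^{\ell} := X^{m^\ell} - X^{m^{\ell-1}}$ and using that both schemes solve the same equation driven by $Y$ on the common Brownian path, one has
\begin{equation*}
D^{\ell}_t = \int_0^t \left[ f(X^{m^\ell}_{\phi_{m^\ell}(s)}) - f(X^{m^{\ell-1}}_{\phi_{m^{\ell-1}}(s)}) \right] dY_s .
\end{equation*}
The first move is to split the integrand by inserting the mixed term $f(X^{m^{\ell-1}}_{\phi_{m^\ell}(s)})$, so that the integrand becomes
\begin{equation*}
\left[ f(X^{m^\ell}_{\phi_{m^\ell}(s)}) - f(X^{m^{\ell-1}}_{\phi_{m^\ell}(s)}) \right] + \left[ f(X^{m^{\ell-1}}_{\phi_{m^\ell}(s)}) - f(X^{m^{\ell-1}}_{\phi_{m^{\ell-1}}(s)}) \right].
\end{equation*}
Under \A{HD} a first-order Taylor expansion turns the first bracket into $Df(X_{\phi_{m^\ell}(s)}) D^{\ell}_{\phi_{m^\ell}(s)}$ up to a remainder, producing a \emph{linear} feedback in $D^{\ell}$; the second bracket is the \emph{source}, measuring the coarse scheme at a fine node against its value at the preceding coarse node.

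Second, I would identify the dominant source martingale. Since $X^{m^{\ell-1}}_{\phi_{m^\ell}(s)} - X^{m^{\ell-1}}_{\phi_{m^{\ell-1}}(s)} = \int_{\phi_{m^{\ell-1}}(s)}^{\phi_{m^\ell}(s)} f(X^{m^{\ell-1}}_{\phi_{m^{\ell-1}}(u)}) dY_u$, the source is, at leading order, $Df(X_s) f(X_s)$ times the Brownian ``overshoot'' $W_{\phi_{m^\ell}(s)} - W_{\phi_{m^{\ell-1}}(s)}$ accumulated since the last coarse node. The heart of the matter is the predictable bracket of the resulting martingale: on a coarse cell of length $h=T/m^{\ell-1}$ subdivided into $m$ fine steps of length $\delta=h/m$, the overshoot increments produce a conditional second moment proportional to $\delta^2 \, m(m-1)/2 = h^2(m-1)/(2m)$. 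Summing over the $m^{\ell-1}$ coarse cells and passing to the occupation integral of the squared coefficients yields a bracket of order $T(m-1)/(2 m^\ell)$; this is precisely why the renormalisation by $\sqrt{m^\ell/((m-1)T)}$, with its extra $1/\sqrt{m-1}$, recovers the \emph{same} limiting variance as the single-scheme error, so that the limit is the very process $V$ of \eqref{PROC:U} and not a rescaling of it.

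Third, I would feed the normalised source into the functional stable CLT for triangular arrays of martingales underlying \cite{Jac:Prot:98} and Theorem~\ref{weak:conv:euler}. This requires checking a Lindeberg/negligibility condition for the overshoot increments (immediate from the moment bounds available for the Euler scheme under \A{HD}), the convergence of the predictable bracket computed above, and the orthogonality of the normalised source martingale against $W$ and against time; these last conditions are exactly what force the limiting noise to be carried by a Brownian motion $B$ \emph{independent} of $W$ and to carry the $1/\sqrt{2}$ factor of the $Z^{ij}$, while the off-diagonal $\sigma^{ik}\sigma^{j\ell}$ structure is recovered as in \cite{Jac:Prot:98}.

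Finally, I would close the loop on the linear feedback: with the source converging stably and functionally, and with $\sqrt{m^\ell/((m-1)T)}\,D^{\ell}$ shown to be $L^2$-bounded via a Gronwall stability estimate on the linear equation (using \A{HD} to control all remainders uniformly in $\ell$), the normalised difference solves a linear SDE whose coefficients converge and whose source converges stably, so continuity of the solution map transfers the stable convergence to the limit $V$ of \eqref{PROC:U}. The main obstacle is genuinely the bracket computation across the two interlaced grids together with the uniform-in-$\ell$ $L^2$ control of the non-dominant terms: one must verify that the second-order Taylor remainders, the drift part of the overshoot, and the discrepancy between $\phi_{m^\ell}$- and $\phi_{m^{\ell-1}}$-frozen coefficients all vanish after normalisation, so that only the $m(m-1)/2$ overshoot contribution survives in the limit.
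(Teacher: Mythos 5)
The paper does not prove this statement at all: it is quoted verbatim from \cite{ben:keb:12} (Theorem 4), so there is no internal proof to compare against. Your sketch is a sound reconstruction of the argument used there, which indeed runs along the Jacod--Protter lines you describe: writing $D^{\ell}=X^{m^{\ell}}-X^{m^{\ell-1}}$ as a stochastic integral against $dY$, splitting off the linear feedback $Df\,D^{\ell}$ from the source term built on the coarse scheme between interlaced grid nodes, and identifying the limit through the bracket of the overshoot martingale. Your bracket arithmetic is the crux and it is right: per coarse cell of length $h=T/m^{\ell-1}$ the conditional bracket is $\delta^{2}m(m-1)/2=h^{2}(m-1)/(2m)$, i.e.\ a density $(m-1)T/(2m^{\ell})$, so that after multiplication by $m^{\ell}/((m-1)T)$ one recovers the density $1/2$ of the single-scheme case --- this is exactly why the limit is the \emph{same} process $V$ of \eqref{PROC:U}, factor $1/\sqrt{2}$ included, and why the asymptotic variances of consecutive errors satisfy $\var(e_{c}-e_{f})=\var(e_{c})-\var(e_{f})$ rather than adding. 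Your checks that the $\Delta t\cdot dW$, $\Delta W\cdot dt$ and $\Delta t\cdot dt$ combinations of the overshoot vanish after normalisation, and that orthogonality to $W$ forces the limiting noise onto an independent Brownian motion $B$, are also the right negligibility conditions. Two caveats on the write-up: under \A{HD} the coefficients are only $\mathcal{C}^{1}$ with Lipschitz derivative of $f$ unavailable beyond boundedness, so there are no genuine ``second-order Taylor remainders''; one must instead control $f(y)-f(x)-Df(x)(y-x)=\bigl(\int_{0}^{1}(Df(x+u(y-x))-Df(x))\,du\bigr)(y-x)$ via continuity of $Df$, localization and tightness, as \cite{Jac:Prot:98} do. And the final step --- transferring stable convergence of the source through the linear equation --- is not literally ``continuity of the solution map'': it requires the stability theorem for SDEs driven by stably converging semimartingales satisfying a uniform tightness condition (Theorem 5.5 of \cite{Jac:Prot:98}, or the Kurtz--Protter good-sequence framework), combined with the Gronwall $L^{2}$ bound you state; this is precisely the machinery the cited reference invokes, so with these two points made precise your route is the proof.
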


We now turn our attention to SA. There are various theorems that guarantee the $a.s.$ and/or $L^{p}$ convergence of SA algorithms. We provide below a general result in order to derive the $a.s.$ convergence of such procedures. It is also known as \emph{Robbins-Monro Theorem} and covers most situations (see the remark below).

\begin{THM}[Robbins-Monro Theorem]\label{RM:THM}
Let $H: \R^d \times \R^q \rightarrow \R^d$ a Borel function and $U$ a $\R^q$-valued random vector with law $\mu$. Define 
$$
\forall \theta \in \R^d, \ h(\theta) = \E[H(\theta,U)],
$$

\noindent and denote by $\theta^{*}$ the (unique) solution to $h(\theta) = 0$. Suppose that $h$ is a continuous function that satisfies the mean-reverting assumption
\begin{equation}
\forall \theta \in \R^d, \theta \neq \theta^{*}, \ \ \langle \theta-\theta^{*} , h(\theta) \rangle>0.
\label{MeanRevert_Ass}
\end{equation}

\noindent Let $\gamma=(\gamma_p)_{p\geq1}$ be a sequence of gain parameters satisfying \eqref{STEP}. Suppose that 
\begin{equation}
\forall \theta \in \R^d, \ \ \E|H(\theta,U)|^2 \leq C(1+ |\theta-\theta^{*}|^2)
\label{Growth_Cond}
\end{equation}

Let $(U_p)_{p\geq1}$ be an i.i.d. sequence of random vectors with common law $\mu$ and $\theta_0$ a random vector independent of $(U_p)_{p\geq 1}$ satisfying $\E |\theta_0|^2< +\infty$. Then, the recursive procedure defined by
\begin{equation}
\theta_{p+1} = \theta_{p} - \gamma_{p+1} H(\theta_p,U_{p+1}), \ p \geq0
\label{SA_Alg}
\end{equation}

\noindent satisfies 
$$
\theta_p \overset{a.s.}{\longrightarrow} \theta^{*},  \ as \ p\rightarrow +\infty.
$$
\end{THM}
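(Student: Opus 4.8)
The plan is to use the standard Lyapunov/martingale approach based on the quadratic function $V(\theta) = |\theta - \theta^{*}|^{2}$ together with the classical Robbins--Siegmund almost-supermartingale convergence lemma. Let $\mathcal{F}_{p} = \sigma(\theta_{0}, U_{1}, \ldots, U_{p})$ denote the natural filtration, so that $\theta_{p}$ is $\mathcal{F}_{p}$-measurable while $U_{p+1}$ is independent of $\mathcal{F}_{p}$ with law $\mu$. First I would expand the square using the recursion \eqref{SA_Alg}:
$$
|\theta_{p+1} - \theta^{*}|^{2} = |\theta_{p} - \theta^{*}|^{2} - 2\gamma_{p+1}\langle \theta_{p} - \theta^{*}, H(\theta_{p}, U_{p+1})\rangle + \gamma_{p+1}^{2}|H(\theta_{p}, U_{p+1})|^{2}.
$$
Taking conditional expectation given $\mathcal{F}_{p}$, using that $\E[H(\theta_{p}, U_{p+1})\mid \mathcal{F}_{p}] = h(\theta_{p})$ together with the growth control \eqref{Growth_Cond} evaluated at the $\mathcal{F}_{p}$-measurable point $\theta_{p}$, I obtain
$$
\E[V(\theta_{p+1})\mid \mathcal{F}_{p}] \leq (1 + C\gamma_{p+1}^{2})V(\theta_{p}) + C\gamma_{p+1}^{2} - 2\gamma_{p+1}\langle \theta_{p} - \theta^{*}, h(\theta_{p})\rangle.
$$

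Next, since $\sum_{p}\gamma_{p}^{2} < \infty$ by \eqref{STEP}, the coefficients $C\gamma_{p+1}^{2}$ are summable, and the final term is non-negative thanks to the mean-reverting assumption \eqref{MeanRevert_Ass}. I would therefore apply the Robbins--Siegmund lemma to the displayed inequality, with $V_{p} = V(\theta_{p})$, $a_{p} = b_{p} = C\gamma_{p+1}^{2}$ and $c_{p} = 2\gamma_{p+1}\langle \theta_{p} - \theta^{*}, h(\theta_{p})\rangle \geq 0$. Its conclusion yields two facts simultaneously: the sequence $(V(\theta_{p}))_{p}$ converges $a.s.$ to a finite non-negative random variable $V_{\infty}$, and the series $\sum_{p}\gamma_{p+1}\langle \theta_{p} - \theta^{*}, h(\theta_{p})\rangle$ converges $a.s.$ In particular $(\theta_{p})_{p}$ is $a.s.$ bounded.

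It then remains to prove that $V_{\infty} = 0$. Writing $g(\theta) := \langle \theta - \theta^{*}, h(\theta)\rangle$, which is continuous and non-negative, the summability of $\sum_{p}\gamma_{p+1}g(\theta_{p})$ combined with the divergence $\sum_{p}\gamma_{p+1} = \infty$ from \eqref{STEP} forces $\liminf_{p}g(\theta_{p}) = 0$ $a.s.$, since otherwise the series would diverge. Using the $a.s.$ boundedness of $(\theta_{p})_{p}$, I would extract along a suitable subsequence a limit point $\theta_{\infty}$ with $g(\theta_{\infty}) = 0$ by continuity; the strict mean-reverting inequality \eqref{MeanRevert_Ass} then forces $\theta_{\infty} = \theta^{*}$. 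Hence $\theta^{*}$ is a limit point of $(\theta_{p})_{p}$, so $V(\theta_{p}) = |\theta_{p} - \theta^{*}|^{2}$ vanishes along that subsequence; since $V(\theta_{p})$ converges to $V_{\infty}$, this gives $V_{\infty} = 0$ and thus $\theta_{p}\to \theta^{*}$ $a.s.$

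The main obstacle is this last identification step rather than the martingale estimate: the almost-supermartingale lemma only delivers convergence of $V(\theta_{p})$ and summability of the drift, and one must rule out convergence to a spurious point by coupling the divergence of $\sum_{p}\gamma_{p}$ with the continuity of $h$ and the strict mean-reverting inequality away from $\theta^{*}$. Some care is also needed to carry out the subsequence extraction $\omega$ by $\omega$ on the almost-sure event where both conclusions of the lemma hold.
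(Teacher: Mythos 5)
Your proof is correct, but note that the paper itself offers no proof of Theorem \ref{RM:THM}: it is recalled as the classical Robbins--Monro theorem and referred to the standard literature (\cite{Duflo1996}, \cite{Kushner2003}, \cite{ben:met:pri}), so there is no in-paper argument to compare against. What you give is precisely the standard textbook route: the quadratic Lyapunov function $V(\theta)=|\theta-\theta^*|^2$, the one-step expansion with conditional expectation, the Robbins--Siegmund almost-supermartingale lemma applied with summable perturbations $C\gamma_{p+1}^2$ (valid by \eqref{STEP}) and non-negative drift $c_p=2\gamma_{p+1}\langle \theta_p-\theta^*,h(\theta_p)\rangle$ (valid by \eqref{MeanRevert_Ass}), and finally the identification $V_\infty=0$ by combining $\sum_p \gamma_p=+\infty$ with the continuity of $h$ and the strict mean-reversion away from $\theta^*$ via an $\omega$-by-$\omega$ subsequence extraction. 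All the steps are sound; the limit-identification argument, which you correctly single out as the delicate part, is carried out properly (the $\liminf$ of $g(\theta_p)$ must vanish, a bounded subsequence converges to a zero of $g$, and \eqref{MeanRevert_Ass} forces that zero to be $\theta^*$, whence the already-convergent sequence $V(\theta_p)$ has limit $0$). The only detail you gloss over is the integrability needed to take conditional expectations and to invoke Robbins--Siegmund, namely $\E\,|\theta_p-\theta^*|^2<+\infty$ for every $p$; this follows by a one-line induction from $\E\,|\theta_0|^2<+\infty$ and the growth condition \eqref{Growth_Cond}, so it is a routine omission rather than a gap.
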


Let us point out that the Robbins-Monro theorem also covers the framework of stochastic gradient algorithm. Indeed, if the function $h$ is the gradient of a convex potential $L$, namely $h=\nabla L$ where $L\in \mathcal{C}^{1}(\R^d,\R_+)$, that satisfies: $\nabla L$ is Lipschitz, $|\nabla L|^2 \leq C(1+L)$ and $\lim_{|\theta|\rightarrow + \infty} L(\theta) = +\infty$ then, $\Argmin  L$ is non-empty and according to the following standard lemma $\theta \mapsto \frac{1}{2}|\theta-\theta^{*}|^2$ is a Lyapunov function so that the sequence $(\theta_n)_{n\geq1}$ defined by \eqref{SA_Alg} converges $a.s.$ to $\theta^*$. 

\begin{LEMME} Let $L\in \mathcal{C}^{1}(\R^d,\R_+)$ be a convex function, then
$$
\forall \theta, \theta' \in \R^d,  \ \ \langle \nabla L(\theta) - \nabla L (\theta'), \theta-\theta' \rangle \geq 0.
$$ 

Moreover, if $\Argmin L$ is non-empty, then one has
$$
\forall \theta \in \R^d \backslash \Argmin L, \forall \theta^{*} \in \Argmin L, \ \ \langle \nabla L(\theta), \theta-\theta^{*} \rangle >0.
$$
\end{LEMME}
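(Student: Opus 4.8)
The plan is to reduce both assertions to the standard first-order (gradient) characterization of a differentiable convex function, namely
$$
L(\theta') \geq L(\theta) + \langle \nabla L(\theta), \theta' - \theta\rangle, \qquad \theta, \theta' \in \R^d.
$$
First I would establish this inequality directly from the definition of convexity: for $t \in (0,1]$ convexity gives $L(\theta + t(\theta'-\theta)) \leq (1-t)L(\theta) + t L(\theta')$, so the difference quotient $t^{-1}\big(L(\theta + t(\theta'-\theta)) - L(\theta)\big)$ is bounded above by $L(\theta') - L(\theta)$; letting $t \downarrow 0$ and using $L \in \mathcal{C}^{1}(\R^d,\R_+)$, the quotient converges to the directional derivative $\langle \nabla L(\theta), \theta'-\theta\rangle$, which yields the claimed inequality.

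For the monotonicity statement, I would apply the gradient inequality twice, once as written and once with the roles of $\theta$ and $\theta'$ exchanged, obtaining
$$
L(\theta') - L(\theta) \geq \langle \nabla L(\theta), \theta'-\theta\rangle, \qquad L(\theta) - L(\theta') \geq \langle \nabla L(\theta'), \theta-\theta'\rangle.
$$
Adding these two inequalities cancels the left-hand sides and leaves $0 \geq -\langle \nabla L(\theta) - \nabla L(\theta'), \theta - \theta'\rangle$, which is exactly the desired monotonicity $\langle \nabla L(\theta) - \nabla L(\theta'), \theta-\theta'\rangle \geq 0$.

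For the second (strict) statement, let $\theta^{*} \in \Argmin L$ and $\theta \in \R^d \backslash \Argmin L$. I would use the gradient inequality with base point $\theta$ and comparison point $\theta^{*}$, i.e. $L(\theta^{*}) \geq L(\theta) + \langle \nabla L(\theta), \theta^{*}-\theta\rangle$, which rearranges to
$$
\langle \nabla L(\theta), \theta - \theta^{*}\rangle \geq L(\theta) - L(\theta^{*}).
$$
Since $\theta^{*}$ is a global minimizer while $\theta$ is not, one has $L(\theta) > L(\theta^{*}) = \min_{\R^d} L$, so the right-hand side is strictly positive and $\langle \nabla L(\theta), \theta-\theta^{*}\rangle > 0$ follows. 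Notably this route produces the strict inequality directly from the gap in values and does not require separately invoking $\nabla L(\theta^{*})=0$.

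The argument is essentially routine and I do not expect a genuine obstacle; the only points requiring a little care are the passage from the difference quotient to the directional derivative when deriving the gradient inequality (legitimate precisely because $L \in \mathcal{C}^{1}$), and the observation that membership $\theta \notin \Argmin L$ forces the \emph{strict} value gap $L(\theta) > L(\theta^{*})$. It is exactly this strictness that upgrades the merely monotone (non-strict) bound to the strict positivity needed to make $\theta \mapsto \tfrac{1}{2}|\theta-\theta^{*}|^2$ a Lyapunov function in the preceding application to the Robbins-Monro scheme.
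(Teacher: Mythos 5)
Your proof is correct and complete: the first-order inequality $L(\theta') \geq L(\theta) + \langle \nabla L(\theta), \theta'-\theta\rangle$ is derived properly from the difference quotient (with $\mathcal{C}^1$ justifying the passage to the limit), symmetrizing it gives the monotonicity, and the strict value gap $L(\theta) > L(\theta^{*})$ forced by $\theta \notin \Argmin L$ correctly yields the strict inequality without ever needing $\nabla L(\theta^{*})=0$. The paper itself states this as a standard lemma and provides no proof, so there is nothing to compare against; your argument is exactly the canonical one the paper implicitly relies on.
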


Now, we provide a result on the weak rate of convergence of SA algorithm. In standard situations, it is well-known that a stochastic algorithm $(\theta_p)_{p\geq1}$ converges to its target at a rate $\gamma^{-1/2}_p$. We also refer to \cite{fri:men:12, fat:fri:13} for some recent developments on non-asymptotic deviation bounds. More precisely, the sequence $(\gamma^{-1/2}_p(\theta_p - \theta^*))_{p\geq1}$ converges in distribution to some normal distribution with a covariance matrix based on $\E[ H(\theta^*,U)H(\theta^*,U)^{T}]$ where $U$ is the noise of the algorithm. The following result is due to \cite{Pell:98} (see also \cite{Duflo1996}, p.161 Theorem 4.III.5) and has the advantage to be local, in the sense that a CLT holds on the set of convergence of the algorithm to an equilibrium which makes possible a straightforward application to multi-target algorithms.

\begin{THM}\label{CLT_SA}Let $\theta^*\in \left\{ h=0 \right\}$. Suppose that $h$ is twice continuously differentiable in a neighborhood of $\theta^*$ and that $Dh(\theta^*)$ is a stable $d\times d$ matrix, $i.e.$ all its eigenvalues have strictly positive real parts. Assume that the function $H$ satisfies the following regularity and growth control property
$$
\theta \mapsto \E\left[ H(\theta,U)H(\theta,U)^{T}\right]  \ \mbox{is continuous on}\  \R^d, \ \ \exists \varepsilon>0 \  s.t. \ \theta \mapsto \E\left[|H(\theta,U)|^{2+\varepsilon}\right] \ \mbox{is locally bounded on} \ \R^d.
$$

Assume that the noise of the algorithm is not degenerated at the equilibrium, that is $\Gamma(\theta^*) :=  \E\left[H(\theta^*,U)H(\theta^*,U)^{T}\right] $ is a positive definite deterministic matrix.

The step sequence of the procedure \eqref{SA_Alg} is given by $\gamma_p = \gamma(p)$, $p\geq1$, where $\gamma$ is a positive function defined on $[0, + \infty[$ decreasing to zero. We assume that $\gamma$ satisfies one of the following assumptions:
\begin{itemize}

\item $\gamma$ varies regularly with exponent $(-a)$, $a \in [0,1)$, that is, for any $x>0$, $\lim_{t\rightarrow +\infty}\gamma(tx)/\gamma(t)=x^{-a}$. In this case, set $\zeta=0$.

\item for $t\geq1$, $\gamma(t)=\gamma_0/t$ and $\gamma_0$ satisfies $2\mathcal{R}e(\lambda_{min}) \gamma_0 > 1$, where $\lambda_{min}$ denotes the eigenvalue of $Dh(\theta^*)$ with the lowest real part. In this case, set $\zeta = 1/(2\gamma_0)$.
\end{itemize} 

Then, on the event $\left\{ \theta_p \rightarrow \theta^*\right\}$, one has
$$
\gamma(p)^{-1/2}\left( \theta_p - \theta^*\right)  \Longrightarrow  \mathcal{N}\left(0, \Sigma^{*}\right) 
$$

\noindent where $\Sigma^*:=\int_0^{\infty} \exp\left(-s(Dh(\theta^*)- \zeta I_d)\right)^{T} \Gamma(\theta^*) \exp\left(-s(Dh(\theta^*)-\zeta I_d)\right) ds$.
\end{THM}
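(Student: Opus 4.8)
The plan is to reduce the statement, conditionally on the event $\{\theta_p \to \theta^*\}$, to a central limit theorem for a triangular array of martingale increments. Setting $\Delta_p := \theta_p - \theta^*$, I first linearize the mean field: since $h$ is twice continuously differentiable near $\theta^*$ and $h(\theta^*)=0$, one has $h(\theta_p) = Dh(\theta^*)\Delta_p + r_p$ with $|r_p| = o(|\Delta_p|)$, while the innovation splits as $H(\theta_p, U_{p+1}) = h(\theta_p) + \Delta M_{p+1}$, where $\Delta M_{p+1} := H(\theta_p,U_{p+1}) - h(\theta_p)$ is an $\mathcal{F}_p$-martingale increment for the filtration $\mathcal{F}_p := \sigma(\theta_0, U_1, \dots, U_p)$. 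The recursion \eqref{SA_Alg} then becomes
$$
\Delta_{p+1} = (I_d - \gamma_{p+1} Dh(\theta^*))\Delta_p - \gamma_{p+1} r_p - \gamma_{p+1}\Delta M_{p+1}.
$$
Next I study the normalized error $\bar\Delta_p := \gamma(p)^{-1/2}\Delta_p$. Dividing by $\gamma(p+1)^{1/2}$ and expanding the ratio $(\gamma(p)/\gamma(p+1))^{1/2}$ to first order, the effective one-step operator becomes $I_d - \gamma(p+1)(Dh(\theta^*) - \zeta I_d)$: for a regularly varying step the correction term is negligible, giving $\zeta = 0$, whereas for $\gamma(t) = \gamma_0/t$ the ratio produces exactly the shift $\zeta = 1/(2\gamma_0)$. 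This is precisely where the condition $2\mathcal{R}e(\lambda_{min})\gamma_0 > 1$ enters: it guarantees that the shifted matrix $Dh(\theta^*) - \zeta I_d$ is still stable, so that the transition products $\Pi_{k,p} := \prod_{j=k+1}^{p}(I_d - \gamma_j(Dh(\theta^*) - \zeta I_d))$ contract like $\exp(-(Dh(\theta^*)-\zeta I_d)\sum_{k < j \le p}\gamma_j)$.

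Before identifying the limit I would establish the $L^2$ rate $\E[|\Delta_p|^2 \mathbf{1}_{\{\theta_p \to \theta^*\}}] = \domin{\gamma(p)}$. This simultaneously yields tightness of $(\bar\Delta_p)_{p\geq1}$ and shows that the contribution of the remainder, namely $\gamma(p)^{-1/2}\sum_{k=1}^p \Pi_{k,p}\gamma_k r_{k-1}$, is negligible in probability because $r_{k-1} = o(|\Delta_{k-1}|)$. The rate follows from a recursive Gronwall-type inequality for $p \mapsto \E[|\Delta_p|^2\mathbf{1}_{\{\theta_p\to\theta^*\}}]$ derived from the linearized recursion, combining the stability of the shifted Jacobian, the growth control \eqref{Growth_Cond}, and elementary comparison estimates for the step sequence.

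The core of the argument is then a Lindeberg-Feller central limit theorem for the martingale array obtained by solving the linearized recursion explicitly, namely
$$
X_{p,k} := -\gamma(p)^{-1/2}\Pi_{k,p}\gamma_k \Delta M_k, \qquad 1 \le k \le p.
$$
I would verify the two classical hypotheses: first the conditional Lindeberg condition, which follows from the locally bounded $(2+\varepsilon)$-moment of $\theta \mapsto \E[|H(\theta,U)|^{2+\varepsilon}]$ together with the geometric decay of $\Pi_{k,p}$; and second the convergence of the predictable quadratic variation $\sum_{k=1}^p \E[X_{p,k}X_{p,k}^T \mid \mathcal{F}_{k-1}]$ towards $\Sigma^*$. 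For the latter I would use that $\E[\Delta M_k \Delta M_k^T \mid \mathcal{F}_{k-1}] \to \Gamma(\theta^*)$ on $\{\theta_p\to\theta^*\}$, by continuity of $\theta \mapsto \E[H(\theta,U)H(\theta,U)^T]$ and $\theta_{k-1}\to\theta^*$, so that the bracket is asymptotically $\sum_{k=1}^p \gamma(p)^{-1}\gamma_k^2 \Pi_{k,p}\Gamma(\theta^*)\Pi_{k,p}^T$; replacing $\Pi_{k,p}$ by its matrix-exponential approximation and passing to a Riemann-sum limit produces exactly the integral defining $\Sigma^*$, equivalently the unique solution of the Lyapunov equation $(Dh(\theta^*)-\zeta I_d)^T\Sigma^* + \Sigma^*(Dh(\theta^*)-\zeta I_d) = \Gamma(\theta^*)$.

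The main obstacles are the sharp $L^2$ rate on the convergence event and the evaluation of the limiting predictable variation: both hinge on precise asymptotics of the matrix products $\Pi_{k,p}$ and on careful bookkeeping of the two step-size regimes, in particular the emergence of the shift $\zeta$ and the exact role of the condition $2\mathcal{R}e(\lambda_{min})\gamma_0>1$ in keeping $Dh(\theta^*)-\zeta I_d$ stable. Since this is the classical central limit theorem of Pelletier, I would follow \cite{Pell:98} (see also \cite{Duflo1996}, Theorem 4.III.5) for these delicate technical estimates rather than reproducing them in full.
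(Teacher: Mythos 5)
Your sketch is precisely the standard Pelletier-style argument that the paper itself does not reproduce — Theorem \ref{CLT_SA} is imported by citation from \cite{Pell:98} (see also \cite{Duflo1996}, Theorem 4.III.5) — and it is also the architecture the paper redeploys to prove its own variants such as Lemma \ref{lem:conv:opt:tradeoff}: linearization with a quadratic remainder, the matrix products $\Pi_{k,p}$, a Hall--Heyde martingale CLT via the conditional Lindeberg condition plus conditional variance, and identification of $\Sigma^*$ through the Lyapunov equation $(Dh(\theta^*)-\zeta I_d)^{T}\Sigma^* + \Sigma^*(Dh(\theta^*)-\zeta I_d) = \Gamma(\theta^*)$. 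The one point you gloss over is that the statement is local, on the non-adapted event $\{\theta_p \rightarrow \theta^*\}$, so the $L^2$ rate $\mathcal{O}(\gamma(p))$ and the bracket convergence cannot come from a direct Gronwall recursion with the indicator inside the expectation and instead require a localization/stopping-time argument; since you explicitly defer these delicate estimates to \cite{Pell:98}, where exactly this is handled, the proposal is correct and essentially matches the cited proof.
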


\begin{REM} In SA theory it is also said that $-Dh(\theta^*)$ is a Hurwitz matrix, that is all its eigenvalue has strictly negative real part. 
The assumption on the step sequence $(\gamma_{n})_{n\geq1}$ is quite general and includes polynomial step sequences. In practical situation, the above theorem is often applied to the usual gain $\gamma_p=\gamma(p) = \gamma_0 p^{-a}$, with $1/2 < a \leq 1$, which notably satisfies \eqref{STEP}.

\end{REM}

Hence we clearly see that the optimal weak rate of convergence is achieved by choosing $\gamma_p = \gamma_0 /p$ with $2 \mathcal{R} e(\lambda_{min}) \gamma_0 >1$. However the main drawback with this choice is that the constraint on $\gamma_0$ is difficult to handle in practical implementation. Moreover it is well-known that in this case the asymptotic covariance matrix is not optimal, see e.g. \cite{Duflo1996} or \cite{ben:met:pri} among others.

As mentioned in the introduction, a solution consists in devising the original SA algorithm \eqref{SA_Alg} with a slow decreasing step $\gamma=(\gamma_{p})_{p\geq1}$, where $\gamma$ varies regularly with exponent $(-a)$, $a \in (1/2,1)$ and to simultaneously compute the empirical mean $(\bar{\theta}_{p})_{p\geq1}$ of the sequence $(\theta_{p})_{p\geq0}$ by setting
\begin{align}
\label{RM_AV}
\bar{\theta}_{p} & = \frac{\theta_0 + \theta_1+ \cdots + \theta_p}{p+1} = \bar{\theta}_{p-1} - \frac{1}{p+1}\left(\bar{\theta}_{p-1}-\theta_{p}\right).
\end{align}

The following result states the weak rate of convergence for the sequence $(\bar{\theta}_{p})_{p\geq1}$. In particular, it shows that the optimal weak rate of convergence and the optimal asymptotic covariance matrix can be obtained without any condition on $\gamma_0$.  For a proof, the reader may refer to \cite{Duflo1996}, p.169.

\begin{THM}\label{rip:polyak:thm} Let $\theta^*\in \left\{ h=0 \right\}$. Suppose that $h$ is twice continuously differentiable in a neighborhood of $\theta^*$ and that $Dh(\theta^*)$ is a stable $d\times d$ matrix, $i.e.$ all its eigenvalues have positive real parts. Assume that the function $H$ satisfies the following regularity and growth control property
$$
\theta \mapsto \E\left[H(\theta,U)H(\theta,U)^{T}\right]  \ \mbox{is continuous on}\  \R^d, \ \ \exists b>0 \  s.t. \ \theta \mapsto \E\left[|H(\theta,U)|^{2+b}\right] \ \mbox{is locally bounded on} \ \R^d.
$$

Assume that the noise of the algorithm is not degenerated at the equilibrium, that is $\Gamma(\theta^*) :=  \E\left[H(\theta^*,U)H(\theta^*,U)^{T}\right]$ is a positive definite deterministic matrix.

The step sequence of the procedure \eqref{SA_Alg} is given by $\gamma_p = \gamma(p)$, $p\geq1$, where $\gamma$ varies regularly with exponent $(-a)$, $a \in (1/2,1)$. Then, on the event $\left\{ \theta_p \rightarrow \theta^*\right\}$, one has
$$
\sqrt{p} \left( \bar{\theta}_p - \theta^* \right) \Longrightarrow  \mathcal{N}\left(0,  Dh(\theta^*)^{-1} \Gamma(\theta^*) (Dh(\theta^*)^{-1})^{T} \right).
$$

\end{THM}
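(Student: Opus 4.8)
The plan is to follow the Polyak--Juditsky averaging approach, reducing the behavior of $\bar\theta_p$ to a martingale central limit theorem. Write $\delta_p = \theta_p - \theta^*$ and $A = Dh(\theta^*)$. Since the statement is local (it holds on the event $\{\theta_p \to \theta^*\}$), I would first localize: modify $H$ outside a neighborhood of $\theta^*$ so that the resulting globally convergent algorithm coincides with the original one eventually on $\{\theta_p \to \theta^*\}$, which is harmless for the limiting law. On this neighborhood $h$ is $\mathcal{C}^2$ with $h(\theta^*)=0$, so a Taylor expansion gives $h(\theta_p) = A\delta_p + R_p$ with $|R_p| \le C|\delta_p|^2$, and the algorithm \eqref{SA_Alg} rewrites as $\delta_{p+1} = \delta_p - \gamma_{p+1}(A\delta_p + R_p + \Delta M_{p+1})$, where $\Delta M_{p+1} := H(\theta_p,U_{p+1}) - h(\theta_p)$ is a martingale increment with respect to the natural filtration.

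The core identity is obtained by solving for $A\delta_p$, namely $A\delta_p = -\gamma_{p+1}^{-1}(\delta_{p+1}-\delta_p) - R_p - \Delta M_{p+1}$, and averaging over $p = 0,\dots,n-1$. Multiplying by $\sqrt{n}$ I obtain $\sqrt{n}\,A\bar\delta_{n-1}$ as a sum of three contributions: a telescoping term $n^{-1/2}\sum_{p=0}^{n-1}\gamma_{p+1}^{-1}(\delta_{p+1}-\delta_p)$, a remainder term $n^{-1/2}\sum_{p=0}^{n-1}R_p$, and the principal martingale term $-n^{-1/2}\sum_{p=0}^{n-1}\Delta M_{p+1}$. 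For the martingale term I would apply a CLT for martingale arrays: the predictable bracket $n^{-1}\sum_{p=0}^{n-1}\E[\Delta M_{p+1}\Delta M_{p+1}^T \mid \mathcal{F}_p]$ converges to $\Gamma(\theta^*)$ because $\theta_p \to \theta^*$ and $\theta\mapsto \E[H(\theta,U)H(\theta,U)^T]$ is continuous, while the Lindeberg condition follows from the local boundedness of $\theta \mapsto \E[|H(\theta,U)|^{2+b}]$. This yields $-n^{-1/2}\sum_{p=0}^{n-1}\Delta M_{p+1} \Longrightarrow \mathcal{N}(0,\Gamma(\theta^*))$.

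It remains to show the other two contributions are negligible, and here the two-sided restriction $a \in (1/2,1)$ enters decisively. A prerequisite is the standard a priori $L^2$ rate $\E|\delta_p|^2 = O(\gamma_p)$ for a stable $A$, which I would establish (or invoke) first. The remainder term is then controlled in $L^1$ by $C n^{-1/2}\sum_{p=0}^{n-1}\E|\delta_p|^2 \le C n^{-1/2}\sum_{p=0}^{n-1}\gamma_p = O(n^{1/2-a})$, which tends to $0$ precisely because $a > 1/2$. The telescoping term I would handle by Abel summation, producing a boundary contribution of order $\gamma_n^{-1}|\delta_n|$ together with a sum weighted by the increments $\gamma_{p+1}^{-1} - \gamma_p^{-1}$; using the regular variation of $\gamma$ with exponent $-a$ (so these increments are of order $\gamma_p^{-1}/p$) and the $L^2$ rate, one checks this term is $o_{\P}(\sqrt{n})$, where $a<1$ guarantees $\gamma_n^{-1}$ does not grow too fast. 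Combining the three estimates via Slutsky gives $\sqrt{n}\,A\bar\delta_{n-1} \Longrightarrow \mathcal{N}(0,\Gamma(\theta^*))$, and since $A$ is invertible, $\sqrt{n}\,\bar\delta_{n-1} \Longrightarrow \mathcal{N}(0, A^{-1}\Gamma(\theta^*)(A^{-1})^T)$, the shift from index $n-1$ to $n$ being asymptotically immaterial.

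I expect the main obstacle to be the Abel-summation control of the telescoping term: it is the step that genuinely requires the regularly-varying structure of the gain with exponent $a<1$, and it must be reconciled simultaneously with the remainder estimate that needs $a>1/2$, so the whole argument hinges on the interplay of both constraints on $a$.
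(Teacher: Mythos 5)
Your proof is correct and coincides with the approach the paper itself relies on: Theorem \ref{rip:polyak:thm} is quoted from \cite{Duflo1996} without an internal proof, but the paper's proof of the analogous averaged statement (Lemma \ref{lem:conv:mean}) follows exactly your route --- solve the linearized recursion for $Dh(\theta^*)(\theta_p-\theta^*)$, Abel-transform the telescoping term, absorb the boundary and second-order remainder terms using the a priori rate $\E|\theta_p-\theta^*|^2=O(\gamma_p)$ (its Lemma \ref{sstrongerror:tech:lemme}) together with the two-sided constraint $1/2<a<1$, and apply the Hall--Heyde martingale CLT (Lindeberg via the $2+b$ moment, conditional variance converging to $\Gamma(\theta^*)$ by continuity) to the dominant term. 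The localization step you sketch, turning the purely local hypotheses into the global mean-reversion needed for the $L^2$ bound, is the standard device for obtaining the conclusion ``on the event $\{\theta_p\to\theta^*\}$'' and matches how the cited reference handles it.
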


\subsection{Main assumptions}
We list here the required assumptions in our framework to derive our asymptotic results and make some remarks. 
\begin{trivlist}
\item[\A{HWR1}] There exists $\rho \in (0,1/2]$, 
$$
n^{\rho}(U^{n}-U) \overset{stably }{ \Longrightarrow} V, \ \ as \ \ n\rightarrow + \infty
$$

\noindent where $V$ is an $\R^{q}$-valued random variable eventually defined on an extension $(\tilde{\Omega} , \tilde{\F}, \tilde{\P})$ of $(\Omega,\F,\P)$.

\item[\A{HWR2}] There exists $\rho \in (0,1/2]$, 
$$
m^{\ell \rho}(U^{m^{\ell}} - U^{m^{\ell-1}}) \overset{stably }{ \Longrightarrow} V^{m} \ \ as \ \ \ell \rightarrow + \infty
$$

\noindent where $V^{m}$ is an $\R^{q}$-valued random variable eventually defined on an extension $(\tilde{\Omega} , \tilde{\F}, \tilde{\P})$ of $(\Omega,\F,\P)$.

\item[\A{HSR}] There exists $\delta>0$,
$$
\sup_{n\geq1}\E\left[|n^{\rho}(U^{n}-U)|^{2+\delta}\right] < +\infty.
$$

\item[\A{HR}] There exists $b\in (0,1]$, 
$$
\sup_{n \in \N^{*}, (\theta, \theta') \in (\R^d)^2} \frac{\E[|H(\theta, U^{n})-H(\theta', U^{n})|^2]}{|\theta-\theta'|^{2b}}< +\infty.
$$

\item[\A{HDH}] For all $\theta \in \R^d$, $\P(U \notin \mathcal{D}_{H,\theta}) =0$ with $\mathcal{D}_{H,\theta}:=\left\{x \in \R^q: x\mapsto H(\theta,x) \mbox{ is differentiable at } x \right\}$.

\item[\A{HLH}] For all $(\theta, \theta', x) \in (\R^d)^2 \times \R^q, \ |H(\theta,x)-H(\theta',x)| \leq C (1+ |x|^{r}) |\theta-\theta'|$, for  some  $C, r>0$.

\item[\A{HI}] There exists $\delta>0$ such that for all $R>0$, we have $\sup_{\left\{\theta: |\theta|\leq R, \ n\in\N^{*}\right\}}\E[|H(\theta,U^{n})|^{2+\delta}]<+\infty$. The sequence $
(\theta \mapsto \E[H(\theta,U^{n}) H(\theta, U^{n})^{T}])_{n\geq1}$ converges locally uniformly towards $\theta \mapsto \E[H(\theta, U) H(\theta, U)^{T}]$. The function $\theta \mapsto \E[H(\theta, U) H(\theta, U)^{T}]$ is continuous and $ \E[H(\theta^{*}, U) H(\theta^{*}, U)^{T}]$ is a positive deterministic matrix.

\item[\A{HMR}] There exists $\underline{\lambda}>0$ such that $\forall n \geq1$
$$
\forall \theta \in \R^d, \ \langle  \theta-\theta^{*,n}, h^{n}(\theta) \rangle  \geq \underline{\lambda} |\theta-\theta^{*,n}|^2.
$$

\end{trivlist}

We will denote $\lambda_{m}$ the lowest real part of the eigenvalues of $Dh(\theta^*)$. We will assume that the step sequence is given by $\gamma_p = \gamma(p)$, $p\geq1$, where $\gamma$ is a positive function defined on $[0, + \infty[$ decreasing to zero and satisfying one of the following assumptions:
\begin{trivlist}
\item[\A{HS1}] $\gamma$ varies regularly with exponent $(-a)$, $a \in [0,1)$, that is, for any $x>0$, $\lim_{t\rightarrow +\infty}\gamma(tx)/\gamma(t)=x^{-a}$.

\item[\A{HS2}] for $t\geq1$, $\gamma(t)=\gamma_0/t$ and $\gamma_0$ satisfies $
2 \underline{\lambda} \gamma_0 > 1$.
 
\end{trivlist}

\begin{REM} Assumption \A{HR} is trivially satisfied when $\theta \mapsto H(\theta,x)$ is Hlder-continuous with modulus having polynomial growth in $x$. However, it is also satisfied when $H$ is less regular. For instance, it holds for $H(\theta,x)=\mbox{\bf{1}}_{\left\{ x \leq \theta \right\}}$ under the additional assumption that $U^{n}$ has a bounded density (uniformly in $n$). 
\end{REM}

\begin{REM}\label{eigenvalue:rem} Assumption \A{HMR} already appears in \cite{Duflo1996} and \cite{ben:met:pri}, see also \cite{fri:men:12} and \cite{fat:fri:13} in another context. It allows to control the $L^2$-norm $\E[|\theta^{n}_p-\theta^{*,n}|^2]$ with respect to the step $\gamma(p)$ uniformly in $n$, see LemmaÊ\ref{sstrongerror:tech:lemme} in Section \ref{technical:res:sec}. As discussed in \cite{Kushner2003}, Chapter 10, Section 5, if one considers the projected version of the algorithm \eqref{RM} on a bounded convex set $D$ (for instance an hyperrectangle $\Pi_{i=1}^{d} [a_i,b_i]$) containing $\theta^{*,n}$, $\forall n\geq1$, as very often happens from a practical point of view, this assumption can be localized on $D$, that is it holds on $D$ instead of $\R^d$. In this case, a sufficient condition is $\inf_{\theta \in D, n\in\N^*} \lambda_{min}((Dh^{n}(\theta)+Dh^{n}(\theta)^{T})/2)>0$, where $\lambda_{min}(A)$ denotes the lowest eigenvalue of the matrix $A$. 

We also want to point out that if it is satisfied then one has $\lambda_m \geq\underline{\lambda}$. Indeed, writing $h^{n}(\theta) =  \int_{0}^{1}Dh^{n}(t \theta + (1-t) \theta^{*,n}) (\theta-\theta^{*,n}) dt$, for all $\theta \in \R^d$, we clearly have
\begin{align*}
\langle  \theta-\theta^{*,n}, h^{n}(\theta) \rangle & = \int_{0}^{1}  \langle \theta - \theta^{*,n}, \frac{Dh^{n}(t \theta + (1-t) \theta^{*,n}) + Dh^{n}(t \theta + (1-t) \theta^{*,n})^{T}}{2} (\theta-\theta^{*,n}) \rangle dt \\
& \geq \underline{\lambda} |\theta-\theta^{*,n}|^2.
\end{align*}
Using the local uniform convergence of $(Dh^{n})_{n\geq1}$ and the convergence of $(\theta^{*,n})_{n\geq1}$ toward $\theta^*$, by passing to the limit $n\rightarrow + \infty$ in the above inequality, we obtain
$$
\forall \theta \in K, \ \int_{0}^{1} \langle \theta - \theta^{*}, \frac{Dh(t \theta + (1-t) \theta^{*}) + Dh(t \theta + (1-t) \theta^{*})^{T}}{2} (\theta-\theta^{*}) \rangle dt  \geq \underline{\lambda} |\theta-\theta^{*}|^2
$$ 

\noindent where $K$ is a compact set such that $\theta^*+ u_{m} \in K$, $u_m$ being the eigenvector associated to the eigenvalue of $Dh(\theta^*)$ with the lowest real part. Hence, selecting $\theta = \theta^*+ \varepsilon u_{m} $ in the previous inequality and passing to the limit $\varepsilon \rightarrow 0$, we get $\lambda_m \geq \underline{\lambda}$.
\end{REM}

\begin{REM}Assumptions \A{HWR1}, \A{HWR2} and $\A{HSR}$ allow to establish a CLT for the multi-level SA estimators presented in sections \ref{pres:mlv:sec} and \ref{ml:sec}. They include the case of the value at time $T$ of an SDE, namely $U=X_T$ approximated by its continuous Euler-Maruyama scheme $U^{n}=X^{n}_T$ with $n$ steps. Under \A{HD} one has $\rho=1/2$. Moreover, $U$ may depend on the whole path of an SDE. For instance, one may have $U=L_T$ the local time at level $0$ of a one-dimensional continuous and adapted diffusion process and the approximations may be given by
$$
U^{n}= \sum_{i=1}^{[nt]} f\left(u_n X_{\frac{i-1}{n}},\sqrt{n}\left( X_{\frac{i}{n}} - X_{\frac{i-1}{n}} \right)\right).
$$

Then under some assumptions on the function $f$ and the coefficients $b, \ \sigma$, the weak and strong rate of convergence is $\rho=1/4$, see \cite{Jacod98} for more details. Let us note that we do not know what happens when $\rho>1/2$ which includes the case of higher order schemes for discretization schemes of SDE.

\end{REM}

\subsection{On the implicit weak error}\label{implicit:discret:sec}

As already observed the approximation of $\theta^*$ solution of $h(\theta)=\E[H(\theta,U)]=0$ is affected by two errors: the \emph{implicit discretization error} and the \emph{statistical error}. Our first results concern the convergence of $\theta^{*,n}$ toward $\theta^*$ and its convergence rate as $n\rightarrow + \infty$.

\begin{THM}
\label{thm:conv:disc}
For all $n \in \N^*$, assume that $h$ and $h^n$ satisfy the mean reverting assumption \eqref{MeanRevert_Ass} of Theorem \ref{RM:THM}. Moreover, suppose that $(h^{n})_{n\geq1}$ converges locally uniformly towards $h$. Then, one has
$$
\theta^{*,n} \rightarrow \theta^* \ \ \mbox{as} \ \ n\rightarrow + \infty.
$$

Moreover, suppose that $h$ and $h^n$, $n\geq1$, are continuously differentiable and that $Dh(\theta^*)$ is non-singular. Assume that $(Dh^{n})_{n\geq1}$ converges locally uniformly to $Dh$. If there exists $\alpha \in \R^{*}$ such that
$$
\forall \theta \in \R^d, \  \lim_{n\rightarrow + \infty} n^{\alpha} (h^n(\theta) - h(\theta)) = \mathcal{E}(h,\alpha, \theta),
$$

\noindent then, one has
$$
 \lim_{n\rightarrow + \infty} n^{\alpha} (\theta^{*,n} - \theta^{*}) = -Dh^{-1}(\theta^*) \mathcal{E}(h,\alpha, \theta^*).
$$

\end{THM}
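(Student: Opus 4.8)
The plan is to establish the two assertions in turn, exploiting in each case the interplay between the mean-reverting structure and the (local) uniform convergence to transfer information from $h$ to the family $(h^n)$.

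For the convergence $\theta^{*,n}\to\theta^*$, I would argue directly that $\theta^{*,n}$ is trapped inside arbitrarily small balls centred at $\theta^*$. Fix $R>0$ and consider the sphere $S_R=\{\theta:|\theta-\theta^*|=R\}$. By continuity of $h$ and the mean-reverting assumption \eqref{MeanRevert_Ass}, the quantity $c_R:=\min_{\theta\in S_R}\langle\theta-\theta^*,h(\theta)\rangle$ is attained by compactness and is strictly positive. Since $(h^n)$ converges to $h$ uniformly on the compact set $S_R$, for $n$ large one has $\langle\theta-\theta^*,h^n(\theta)\rangle\geq c_R/2>0$ for every $\theta\in S_R$. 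I then claim that $\theta^{*,n}$ lies in the open ball $\{|\theta-\theta^*|<R\}$. Indeed, if $|\theta^{*,n}-\theta^*|\geq R$, the point $\bar\theta=\theta^*+R(\theta^{*,n}-\theta^*)/|\theta^{*,n}-\theta^*|$ belongs to $S_R$ and satisfies $\bar\theta-\theta^{*,n}=(R-|\theta^{*,n}-\theta^*|)(\theta^{*,n}-\theta^*)/|\theta^{*,n}-\theta^*|$; the mean-reverting assumption of $h^n$ about $\theta^{*,n}$ then forces $\langle\bar\theta-\theta^*,h^n(\bar\theta)\rangle$ to be non-positive (it equals $0$ in the boundary case $|\theta^{*,n}-\theta^*|=R$, since then $\bar\theta=\theta^{*,n}$ and $h^n(\bar\theta)=0$), contradicting the lower bound on $S_R$. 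Hence $|\theta^{*,n}-\theta^*|<R$ for all large $n$, and since $R>0$ is arbitrary this is precisely $\theta^{*,n}\to\theta^*$.

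For the rate, I would start from $h^n(\theta^{*,n})=0=h(\theta^*)$ and expand. Writing $g^n:=h^n-h$ and applying the fundamental theorem of calculus to the $\mathcal{C}^1$ functions $h$ and $g^n$ along the segment joining $\theta^*$ to $\theta^{*,n}$, I set
\begin{align*}
A_n &:= \int_0^1 Dh\bigl(\theta^*+t(\theta^{*,n}-\theta^*)\bigr)\,dt, \\
B_n &:= \int_0^1 Dg^n\bigl(\theta^*+t(\theta^{*,n}-\theta^*)\bigr)\,dt,
\end{align*}
so that $0=h^n(\theta^{*,n})=g^n(\theta^*)+(A_n+B_n)(\theta^{*,n}-\theta^*)$. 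Using the already established convergence $\theta^{*,n}\to\theta^*$: continuity of $Dh$ gives $A_n\to Dh(\theta^*)$, while the local uniform convergence of $(Dh^n)$ towards $Dh$ on a fixed compact neighbourhood of $\theta^*$ gives $B_n\to 0$. Multiplying by $n^\alpha$ and rearranging yields the linear relation
$$
(A_n+B_n)\,n^\alpha(\theta^{*,n}-\theta^*)=-\,n^\alpha g^n(\theta^*).
$$

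The delicate point, and the main obstacle, is that the hypothesis supplies the limit of the weak error $n^\alpha g^n(\theta)$ only at a \emph{fixed} $\theta$, whereas it would naturally appear evaluated at the moving point $\theta^{*,n}$; a naive substitution produces a circular dependence on the very unknown $n^\alpha(\theta^{*,n}-\theta^*)$. The identity above resolves this precisely because the dangerous contribution has been isolated in the coefficient $B_n$, which tends to $0$ thanks to the convergence of the Jacobians. Since $A_n+B_n\to Dh(\theta^*)$ is non-singular, the matrix $A_n+B_n$ is invertible for $n$ large with $(A_n+B_n)^{-1}\to Dh(\theta^*)^{-1}$, and $n^\alpha g^n(\theta^*)=n^\alpha\bigl(h^n(\theta^*)-h(\theta^*)\bigr)\to\mathcal{E}(h,\alpha,\theta^*)$ by assumption. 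Passing to the limit then gives $n^\alpha(\theta^{*,n}-\theta^*)\to-Dh(\theta^*)^{-1}\mathcal{E}(h,\alpha,\theta^*)$, as claimed.
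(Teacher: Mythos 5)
Your proof is correct and takes essentially the same approach as the paper: your localization of $\theta^{*,n}$ rests on the strict positivity of $\langle\theta-\theta^{*},h(\theta)\rangle$ on a compact sphere, uniform convergence of $h^{n}$ there, and the mean-reverting property of $h^{n}$ around $\theta^{*,n}$ (the paper runs an intermediate-value argument along rays where you derive a direct contradiction at the radial projection of $\theta^{*,n}$, a cosmetic difference), and your rate identity $(A_n+B_n)\,n^{\alpha}(\theta^{*,n}-\theta^{*})=-n^{\alpha}g^{n}(\theta^{*})$ is exactly the paper's Taylor expansion $h^{n}(\theta^{*})=h^{n}(\theta^{*,n})+\bigl(\int_0^1 Dh^{n}(\lambda\theta^{*,n}+(1-\lambda)\theta^{*})\,d\lambda\bigr)(\theta^{*}-\theta^{*,n})$ with the integrated Jacobian merely split into $A_n+B_n$. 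No gap.
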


\subsection{On the optimal tradeoff between the implicit error and the statistical error}\label{opt:tradeoff:sec}
Given the order of the implicit weak error, a natural question is to find the optimal balance between the value of $n$ in the approximation of $U$ and the number $M$ of steps in \eqref{RM} for the computation of $\theta^{*,n}$ in order to achieve a given global error $\varepsilon$. 

\begin{THM}\label{globalTCL}Suppose that the assumptions of Theorem \ref{thm:conv:disc} are satisfied and that $h$ satisfies the assumptions of Theorem \ref{CLT_SA}. Assume that \A{HR}, \A{HI} and \A{HMR} hold and that $h^{n}$ is twice continuously differentiable with $Dh^{n}$ Lipschitz continuous uniformly in $n$. 
If \A{HS1} or \A{HS2} is satisfied then one has
$$
n^{\alpha}\left( \theta^{n}_{\gamma^{-1}(1/n^{2\alpha})} - \theta^* \right) \Longrightarrow  -Dh^{-1}(\theta^*) \mathcal{E}(h,\alpha, \theta^*) + \mathcal{N}\left(0, \Sigma^{*}\right),
$$  

\noindent where 
\begin{equation}
\label{cov:matrix:opt:alloc}
\Sigma^{*}:=\int_0^{\infty} \exp\left(-s(Dh(\theta^*)- \zeta I_d)\right)^{T} \E[H(\theta^*,U)H(\theta^*,U)^{T}] \exp\left(-s(Dh(\theta^*)-\zeta I_d)\right) ds
\end{equation}

\noindent with $\zeta=0$ if \A{HS1} holds and $\zeta=1/2\gamma_0$ if \A{HS2} holds.

\end{THM}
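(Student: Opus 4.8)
The plan is to start from the elementary decomposition of the global error
$$
\theta^{n}_{M} - \theta^* = \left(\theta^{n}_{M} - \theta^{*,n}\right) + \left(\theta^{*,n} - \theta^*\right), \quad M = \gamma^{-1}(1/n^{2\alpha}),
$$
and to multiply it by $n^{\alpha}$. The key observation is that the choice $M = \gamma^{-1}(1/n^{2\alpha})$ forces $\gamma(M) = n^{-2\alpha}$, so that $n^{\alpha} = \gamma(M)^{-1/2}$ and $M \to +\infty$ as $n \to +\infty$. The deterministic bias term $n^{\alpha}(\theta^{*,n}-\theta^*)$ converges to $-Dh^{-1}(\theta^*)\mathcal{E}(h,\alpha,\theta^*)$ by Theorem \ref{thm:conv:disc}, so by a Slutsky-type argument it suffices to prove that the renormalized statistical error $\gamma(M)^{-1/2}(\theta^{n}_{M}-\theta^{*,n})$ converges in law to $\mathcal{N}(0,\Sigma^*)$.

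The difficulty is that this is not a direct application of Theorem \ref{CLT_SA}: the algorithm itself varies with $n$ (through the innovations $U^{n}$, the function $h^{n}$ and the target $\theta^{*,n}$) while the number of iterations $M=M(n)$ simultaneously grows. I would therefore linearize the recursion around $\theta^{*,n}$. Setting $\Delta^{n}_p := \theta^{n}_p - \theta^{*,n}$, writing $H(\theta^{n}_p,(U^{n})^{p+1}) = h^{n}(\theta^{n}_p) + \Delta M^{n}_{p+1}$ with $\Delta M^{n}_{p+1}$ a martingale increment for the natural filtration, and using $h^{n}(\theta^{*,n})=0$ together with a first-order Taylor expansion gives
$$
\Delta^{n}_{p+1} = \left(I_d - \gamma_{p+1} Dh^{n}(\theta^{*,n})\right)\Delta^{n}_p - \gamma_{p+1}\Delta M^{n}_{p+1} - \gamma_{p+1} r^{n}_p,
$$
where the remainder satisfies $|r^{n}_p| \leq C |\Delta^{n}_p|^2$ uniformly in $n$ thanks to the uniform Lipschitz continuity of $Dh^{n}$. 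The uniform $L^2$ stability estimate $\E[|\Delta^{n}_p|^2] \leq C\gamma(p)$ provided by \A{HMR} (Lemma \ref{sstrongerror:tech:lemme}), together with the fact that \A{HMR} forces $\lambda_m \geq \underline{\lambda}>0$ and hence the stability of the linear part uniformly in $n$, shows that after renormalization the remainder term is negligible in probability, so that $\gamma(M)^{-1/2}\Delta^{n}_M$ is asymptotically equivalent to a weighted martingale sum built from the increments $\Delta M^{n}_{p+1}$ and the products of the matrices $I_d - \gamma_j Dh^{n}(\theta^{*,n})$.

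It then remains to apply a Lindeberg--Feller martingale array CLT to this weighted sum. The predictable quadratic variation involves the conditional covariance $\E[H(\theta^{n}_p,U^{n})H(\theta^{n}_p,U^{n})^{T} \mid \mathcal{F}_p]$; using \A{HI}, the local uniform convergence of $\theta \mapsto \E[H(\theta,U^{n})H(\theta,U^{n})^{T}]$ to $\theta \mapsto \E[H(\theta,U)H(\theta,U)^{T}]$, its continuity, and the convergences $\theta^{n}_p \to \theta^{*,n} \to \theta^*$, this conditional covariance converges to $\Gamma(\theta^*)=\E[H(\theta^*,U)H(\theta^*,U)^{T}]$. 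Combined with $Dh^{n}(\theta^{*,n}) \to Dh(\theta^*)$ (from the local uniform convergence of $Dh^{n}$ and $\theta^{*,n}\to\theta^*$), the predictable variation converges to the integral $\Sigma^*$ of \eqref{cov:matrix:opt:alloc}, with $\zeta$ distinguishing the two step regimes \A{HS1} and \A{HS2}; the Lindeberg condition follows from the $(2+\delta)$-moment bound in \A{HI}.

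The main obstacle is precisely this diagonal passage to the limit, in which the algorithm and the horizon $M(n)$ change together: one must carry out the linearization, control the remainder and verify the martingale CLT hypotheses with bounds that are uniform in $n$. The assumptions \A{HMR} (uniform $L^2$ control), \A{HR} and the uniform Lipschitz continuity of $Dh^{n}$ are exactly what make these uniform-in-$n$ estimates available, while \A{HI} guarantees that the limiting covariance is governed by the limit quantities $Dh(\theta^*)$ and $\Gamma(\theta^*)$ rather than by their $n$-dependent analogues.
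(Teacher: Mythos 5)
Your proposal follows essentially the same route as the paper: the same bias/statistical decomposition, with Theorem \ref{thm:conv:disc} handling $n^{\alpha}(\theta^{*,n}-\theta^{*})$; the same linearization of the recursion around $\theta^{*,n}$ into products of matrices $I_d-\gamma_j Dh(\cdot)$ plus a martingale term plus a quadratic remainder controlled via the uniform Lipschitz continuity of $Dh^{n}$; the same uniform $L^{2}$ estimate $\E[|\theta^{n}_p-\theta^{*,n}|^2]\leq C\gamma(p)$ from \A{HMR} (Lemma \ref{sstrongerror:tech:lemme}) to kill the remainders; and the same martingale-array CLT with $\Sigma^{*}$ identified as the solution of a Lyapunov equation. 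In the paper this whole diagonal argument is packaged as Lemma \ref{lem:conv:opt:tradeoff}, so your overall architecture matches the intended proof. (One cosmetic difference: the paper builds its products $\Pi_{k,n}$ from the limit Jacobian $Dh(\theta^{*})$ and pushes the discrepancy $(Dh(\theta^{*})-Dh^{n}(\theta^{*,n}))(\theta^{n}_{k-1}-\theta^{*,n})$ into the negligible remainder, whereas you keep $Dh^{n}(\theta^{*,n})$ in the products; both work.)

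There is, however, one loose step in your plan, precisely where the paper does something you omit. You apply the Lindeberg--Feller theorem to the martingale whose increments are evaluated at the \emph{running point} $\theta^{n}_p$, and assert that the Lindeberg condition ``follows from the $(2+\delta)$-moment bound in \A{HI}''. But \A{HI} only bounds $\E[|H(\theta,U^{n})|^{2+\delta}]$ locally in $\theta$ (uniformly in $n$), and $\theta^{n}_p$ is not confined to a fixed ball, so the bound does not apply as stated; similarly, your conditional covariance $\E[H(\theta^{n}_p,U^{n})H(\theta^{n}_p,U^{n})^{T}\,|\,\mathcal{F}_p]$ is random and its convergence in the weighted Ces\`aro sense needs an extra argument. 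The paper sidesteps both issues by first splitting $\Delta M^{n}_{k}$ into the martingale frozen at $\theta^{*,n}$ --- whose conditional covariance is the \emph{deterministic} matrix $\Gamma_n=\E[H(\theta^{*,n},U^{n})H(\theta^{*,n},U^{n})^{T}]$, and where \A{HI} applies directly since $\sup_n|\theta^{*,n}|<+\infty$ --- plus the difference term $R_n$, shown negligible in $L^{2}$ using exactly \A{HR} together with Lemma \ref{sstrongerror:tech:lemme} and Jensen's inequality (only second moments are needed for this piece, which is why the H\"older-type condition \A{HR} suffices and why it covers discontinuous $H$ such as indicators). You list \A{HR} among your tools but never deploy it at the one spot where it is indispensable; inserting this freezing decomposition into your CLT step closes the gap and makes the rest of your argument go through as in the paper.
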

\begin{LEMME}\label{lem:conv:opt:tradeoff} Let $\delta>0$. Under the assumptions of Theorem \ref{globalTCL}, one has
$$
n^{\alpha} \left(  \theta^{n^{\delta}}_{\gamma^{-1}(1/n^{2\alpha})} - \theta^{*,n^{\delta}} \right)  \Longrightarrow \mathcal{N}(0,\Sigma^*), \ \ n\rightarrow + \infty.
$$

\end{LEMME}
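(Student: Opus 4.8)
The plan is to establish the statement as a triangular-array version of the stochastic approximation central limit theorem (Theorem \ref{CLT_SA}), in which both the bias parameter $N = N_n := n^\delta$ and the number of steps $M = M_n := \gamma^{-1}(1/n^{2\alpha})$ tend to infinity with $n$. The crucial preliminary observation is that $\gamma(M_n) = n^{-2\alpha}$, so the scaling factor $n^\alpha$ in the statement is exactly the CLT rate $\gamma(M_n)^{-1/2}$, and this matching holds irrespective of $\delta$. Writing $\Delta^N_p := \theta^N_p - \theta^{*,N}$ and $A^N := Dh^N(\theta^{*,N})$, I would first linearise the dynamics. Setting $\Delta M^N_{p+1} := H(\theta^N_p, (U^N)^{p+1}) - h^N(\theta^N_p)$, a martingale increment for $\mathcal{F}^N_p := \sigma(\theta^N_0, (U^N)^1, \dots, (U^N)^p)$, and Taylor-expanding $h^N$ around $\theta^{*,N}$ (using that $h^N$ is twice continuously differentiable with $Dh^n$ Lipschitz uniformly in $n$), the recursion becomes
\begin{equation*}
\Delta^N_{p+1} = (I_d - \gamma_{p+1} A^N)\Delta^N_p - \gamma_{p+1} r^N_p - \gamma_{p+1}\Delta M^N_{p+1}, \qquad |r^N_p| \leq C |\Delta^N_p|^2,
\end{equation*}
with a constant $C$ that is \emph{uniform in $N$} by the uniform Lipschitz control of $Dh^n$. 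Introducing the transition matrices $\Pi^N_{k,p} := \prod_{j=k+1}^{p}(I_d - \gamma_j A^N)$ and solving, one obtains
\begin{equation*}
n^\alpha \Delta^N_M = \gamma(M)^{-1/2}\,\Pi^N_{0,M}\,\Delta^N_0 \; - \; \gamma(M)^{-1/2}\sum_{k=1}^{M}\gamma_k\,\Pi^N_{k,M}\, r^N_{k-1} \; - \; \gamma(M)^{-1/2}\sum_{k=1}^{M}\gamma_k\,\Pi^N_{k,M}\,\Delta M^N_k.
\end{equation*}

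The whole point is that every estimate must be controlled \emph{uniformly in the bias parameter} $N$. The cornerstone is the uniform mean-reverting assumption \A{HMR}: by Remark \ref{eigenvalue:rem} it forces the eigenvalues of $A^N = Dh^N(\theta^{*,N})$ to have real part at least $\underline\lambda>0$ uniformly in $N$, which yields a uniform geometric decay of $\|\Pi^N_{k,M}\|$, and by Lemma \ref{sstrongerror:tech:lemme} it gives the uniform bound $\sup_N \E[|\Delta^N_p|^2] \leq C\gamma(p)$. With these two facts the initial-condition term vanishes since $\gamma(M)^{-1/2}\|\Pi^N_{0,M}\|\to 0$, while for the nonlinear-remainder term one uses $\E|r^N_{k-1}| \leq C\,\E|\Delta^N_{k-1}|^2 \leq C\gamma_{k-1}$ together with the uniform decay of $\Pi^N_{k,M}$ to conclude, exactly as in the proof of Theorem \ref{CLT_SA} but now with $N$-independent constants, that $\gamma(M)^{-1/2}\sum_k \gamma_k\,\Pi^N_{k,M}\, r^N_{k-1} \to 0$ in $L^1$.

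The leading contribution is the martingale term. I would set $Z^N_k := -\gamma(M)^{-1/2}\gamma_k\,\Pi^N_{k,M}\,\Delta M^N_k$, a martingale-difference triangular array with respect to $(\mathcal{F}^N_k)_k$, and apply the Lindeberg--Feller central limit theorem for arrays. Two conditions must be verified. First, the predictable quadratic variation $\sum_{k=1}^{M}\E[Z^N_k (Z^N_k)^T \mid \mathcal{F}^N_{k-1}]$ must converge in probability to $\Sigma^*$ from \eqref{cov:matrix:opt:alloc}: the conditional covariance of $\Delta M^N_k$ equals $\E[H(\theta^N_{k-1},U^N)H(\theta^N_{k-1},U^N)^T] - h^N(\theta^N_{k-1})h^N(\theta^N_{k-1})^T$, which by the local uniform convergence of $(Dh^n)$ and of the noise covariance in \A{HI}, together with $\theta^{*,n}\to\theta^*$ (Theorem \ref{thm:conv:disc}), converges near the equilibrium to $\Gamma(\theta^*) = \E[H(\theta^*,U)H(\theta^*,U)^T]$; the weighted sum of the $\gamma_k\,\Pi^N_{k,M}(\cdot)(\Pi^N_{k,M})^T$ is then identified as a Riemann-type approximation of $\int_0^\infty \exp(-s(Dh(\theta^*)-\zeta I_d))^{T}\,\Gamma(\theta^*)\,\exp(-s(Dh(\theta^*)-\zeta I_d))\,ds$, with $\zeta$ chosen according to \A{HS1} or \A{HS2}. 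Second, the Lindeberg condition follows from the uniform higher-order moment control in \A{HI} (and \A{HSR}): since $|Z^N_k| \leq C\gamma(M)^{-1/2}\gamma_k\|\Pi^N_{k,M}\|\,|\Delta M^N_k|$ is uniformly small, a Lyapunov argument kills the truncated second moments.

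The main obstacle is precisely this simultaneous control in $N$ and in the step index: one must upgrade every estimate of the single-level CLT of Theorem \ref{CLT_SA} to be uniform in the bias parameter $N = n^\delta$ — which is exactly what \A{HMR}, \A{HI} and \A{HSR} are tailored for — and then check that the predictable quadratic variation stabilises to the \emph{limit} object $\Sigma^*$ built from $Dh(\theta^*)$ and $\Gamma(\theta^*)$, rather than to the $N$-dependent matrix built from $Dh^N(\theta^{*,N})$ and $\E[H(\theta^{*,N},U^N)H(\theta^{*,N},U^N)^T]$. Combining the three steps, the initial and remainder terms vanish, the martingale term converges to $\mathcal{N}(0,\Sigma^*)$, and the claimed convergence $n^\alpha\big(\theta^{n^\delta}_{\gamma^{-1}(1/n^{2\alpha})} - \theta^{*,n^\delta}\big) \Longrightarrow \mathcal{N}(0,\Sigma^*)$ follows.
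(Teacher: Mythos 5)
Your overall architecture coincides with the paper's: linearisation of the recursion around $\theta^{*,n^{\delta}}$ with a quadratic remainder controlled by the uniform Lipschitz property of $Dh^{n}$, uniform-in-$N$ geometric decay of the transition products guaranteed by \A{HMR} (via Remark \ref{eigenvalue:rem}), the uniform $L^2$ bound $\sup_N \E|\theta^{N}_p-\theta^{*,N}|^2 \leq C\gamma(p)$ of Lemma \ref{sstrongerror:tech:lemme} to kill the initial-condition and remainder terms, and the identification of $\Sigma^*$ as the solution of the Lyapunov equation for $Dh(\theta^*)-\zeta I_d$. Keeping the $N$-dependent matrix $A^N=Dh^{N}(\theta^{*,N})$ inside the products, instead of freezing $Dh(\theta^*)$ and moving $(Dh(\theta^*)-Dh^{n^{\delta}}(\theta^{*,n^{\delta}}))(\theta^{n^{\delta}}_{k-1}-\theta^{*,n^{\delta}})$ into the drift remainder as the paper does, is a benign variant once one uses $A^N\to Dh(\theta^*)$.

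The genuine gap is at the martingale step. You feed the raw increments $\Delta M^{N}_k = H(\theta^{N}_{k-1},(U^{N})^{k})-h^{N}(\theta^{N}_{k-1})$, evaluated at the \emph{running} state, into the Lindeberg--Feller theorem, and justify the Lindeberg condition by the ``uniform higher-order moment control in \A{HI}''. But \A{HI} bounds $\E|H(\theta,U^{n})|^{2+\delta}$ only for $\theta$ in a fixed ball, and nothing confines $\theta^{N}_{k-1}$ to a compact set: the only available trajectory control is the $L^2$ bound of Lemma \ref{sstrongerror:tech:lemme}, which yields no $(2+\delta)$-moment of $H(\theta^{N}_{k-1},\cdot)$, so the Lindeberg verification as stated does not follow from the hypotheses. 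The same issue infects your predictable-variation step: the conditional covariance is the random matrix $\E[H(\theta,U^{N})H(\theta,U^{N})^{T}]\big|_{\theta=\theta^{N}_{k-1}}-h^{N}(\theta^{N}_{k-1})h^{N}(\theta^{N}_{k-1})^{T}$, and the continuity and local uniform convergence granted by \A{HI} are again only local in $\theta$, so its stabilisation to $\Gamma(\theta^*)$ requires a truncation argument with global growth control that you do not supply (the a.s.-convergence event exploited in Theorem \ref{CLT_SA} is not available uniformly in $N$). The paper resolves precisely this by a further split of the martingale term into $R_n+M_n$: the state-dependent part $R_n$, built from $H(\theta^{n^{\delta}}_k,\cdot)-H(\theta^{*,n^{\delta}},\cdot)$, is shown to vanish in $L^2$ faster than $n^{-\alpha}$ using \A{HR} together with $\E[|\theta^{n^{\delta}}_k-\theta^{*,n^{\delta}}|^{2b}]\leq C\gamma_k^{b}$ (note that \A{HR} is an assumption of Theorem \ref{globalTCL} that your proposal never invokes --- a symptom of the gap); the array CLT is then applied only to the frozen-state martingale $M_n$ with increments $h^{n^{\delta}}(\theta^{*,n^{\delta}})-H(\theta^{*,n^{\delta}},(U^{n^{\delta}})^{k+1})$, whose conditional covariance is the deterministic matrix $\Gamma_n\to\Gamma^*$ and for which the local moment bound of \A{HI} suffices since $\theta^{*,n^{\delta}}$ stays in a fixed ball. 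Inserting this decomposition repairs your argument; the remaining steps then go through as you describe.
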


\begin{proof}[Proof of Theorem \ref{globalTCL}] We decompose the error as follows:
$$
\theta^{n}_{\gamma^{-1}(1/n^{2\alpha})} - \theta^* = \theta^{n}_{\gamma^{-1}(1/n^{2\alpha})} - \theta^{*,n}  + \theta^{*,n} - \theta^{*}
$$

\noindent and analyze each term of the above sum. By Lemma \ref{lem:conv:opt:tradeoff}, we have
$$
n^{\alpha} \left( \theta^{n}_{\gamma^{-1}(1/n^{2\alpha})} - \theta^{*,n} \right)  \Longrightarrow  \mathcal{N}\left(0, \Sigma^{*}\right) 
$$

\noindent and using Theorem \ref{thm:conv:disc}, we also obtain
$$
n^{\alpha} (\theta^{*,n} - \theta^{*}) \rightarrow -Dh^{-1}(\theta^*) \mathcal{E}(h,\alpha, \theta^*).
$$


\end{proof}

The result of Theorem \ref{globalTCL} could be construed as follows. For a total error of order $1/n^{\alpha}$, it is necessary to achieve at least $M=\gamma^{-1}(1/n^{2\alpha})$ steps of the SA scheme defined by \eqref{RM}. Hence, in this case the complexity (or computational cost) of the algorithm is given by
\begin{align}
C_{SA}(\gamma) = C \times n \times \gamma^{-1}(1/n^{2\alpha}), 
\label{Compl:SA}
\end{align}

\noindent where $C$ is some positive constant. We now investigate the impact of the step sequence $(\gamma_n)_{n\geq1}$ on the complexity by considering the two following basic step sequences:
\begin{itemize}

\item if we choose $\gamma(p) = \gamma_0/p$ with $2 \underline{\lambda} \gamma_0 >1$, then $C_{SA} = C \times n^{2\alpha + 1 }$.

\item if we choose $\gamma(p) = \gamma_0/p^{\rho}$, $\frac12 < \rho <1$ then $C_{SA} = C \times n^{2\alpha/\rho +1}$.

\end{itemize}

Hence we clearly see that the minimal complexity is achieved by choosing $\gamma_p = \gamma_0 /p$ with $2 \underline{\lambda} \gamma_0 >1$. In this latter case, we see that the computational cost is similar to the one achieved by the classical Monte Carlo algorithm for the computation of $\E_x[f(X_T)]$. However the main drawback with this choice of step sequence comes from the constraint on $\gamma_0$. Next result shows that the optimal complexity can be reached for free through the smoothing of the procedure \eqref{RM} according to the Ruppert \& Polyak averaging principle.

\begin{THM}\label{globalTCLRupPol} Suppose that the assumptions of Theorem \ref{thm:conv:disc} are satisfied and that $h$ satisfies the assumptions of Theorem \ref{CLT_SA}. Assume that \A{HR}, \A{HI} and \A{HMR} hold and that $h^{n}$ is twice continuously differentiable with $Dh^{n}$ Lipschitz continuous uniformly in $n$. Define the empirical mean sequence $(\bar{\theta}^{n}_p)_{p\geq1}$ of the sequence $(\theta^{n}_p)_{p\geq1}$ by setting
$$
\bar{\theta}^{n}_{p}  = \frac{\theta_0 + \theta^{n}_1+ \cdots + \theta^{n}_p}{p+1} = \bar{\theta}^{n}_{p-1} - \frac{1}{p+1}\left(\bar{\theta}^{n}_{p-1}-\theta^{n}_{p}\right),
$$

\noindent where the step sequence $\gamma=(\gamma_p)_{p\geq1}$ satisfies \A{HS1} with $a \in (1/2,1)$.
Then, one has
$$
n^{\alpha}\left(\bar{\theta}^{n}_{n^{2\alpha}} - \theta^* \right) \Longrightarrow  -Dh^{-1}(\theta^*) \mathcal{E}(h,\alpha, \theta^*) + \mathcal{N}\left(0, Dh(\theta^*)^{-1} \E[H(\theta^*,U)H(\theta^*,U)^{T}] (Dh(\theta^*)^{-1})^{T} \right),
$$

\end{THM}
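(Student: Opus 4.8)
The plan is to mirror the short proof of Theorem~\ref{globalTCL}, replacing the crude iterate $\theta^n_{\gamma^{-1}(1/n^{2\alpha})}$ by the averaged sequence and replacing Lemma~\ref{lem:conv:opt:tradeoff} by an averaged analogue. First I would decompose
$$
\bar{\theta}^n_{n^{2\alpha}} - \theta^* = \left(\bar{\theta}^n_{n^{2\alpha}} - \theta^{*,n}\right) + \left(\theta^{*,n} - \theta^*\right).
$$
The second (implicit) term is handled verbatim as in Theorem~\ref{globalTCL}: by Theorem~\ref{thm:conv:disc}, the deterministic sequence $n^\alpha(\theta^{*,n}-\theta^*)$ converges to $-Dh^{-1}(\theta^*)\mathcal{E}(h,\alpha,\theta^*)$. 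Since $M := n^{2\alpha}$ gives $\sqrt{M}=n^\alpha$, it remains to establish the averaged statistical CLT
$$
n^\alpha\left(\bar{\theta}^n_{n^{2\alpha}} - \theta^{*,n}\right) \Longrightarrow \mathcal{N}\left(0, Dh(\theta^*)^{-1}\E[H(\theta^*,U)H(\theta^*,U)^T](Dh(\theta^*)^{-1})^T\right),
$$
after which Slutsky's theorem (the implicit part being deterministic) yields the claim.

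For this statistical CLT I would run the Polyak--Juditsky averaging argument, but uniformly in $n$, since the single-index Ruppert--Polyak result (Theorem~\ref{rip:polyak:thm}) cannot be invoked directly in the coupled regime $M=n^{2\alpha}\to\infty$. Writing $\Delta^n_p := \theta^n_p - \theta^{*,n}$, $A_n := Dh^n(\theta^{*,n})$ and the martingale increment $\epsilon^n_{p+1} := H(\theta^n_p,(U^n)^{p+1}) - h^n(\theta^n_p)$, the recursion \eqref{RM} rearranges into
$$
A_n \Delta^n_p = \frac{\Delta^n_p - \Delta^n_{p+1}}{\gamma_{p+1}} - R^n_p - \epsilon^n_{p+1}, \quad R^n_p := h^n(\theta^n_p) - A_n \Delta^n_p.
$$
Summing over $p\in\leftB 0,M-1\rightB$, dividing by $M$ and applying $A_n^{-1}$, I would show that only the martingale term survives after multiplication by $\sqrt{M}$:
$$
\sqrt{M}\,\bar{\Delta}^n_{M} = -A_n^{-1}\frac{1}{\sqrt{M}}\sum_{p=0}^{M-1}\epsilon^n_{p+1} + o_{\P}(1).
$$
The telescoping term is treated by Abel summation and the uniform bound $\sup_n\E[|\Delta^n_p|^2]\le C\gamma(p)$ from Lemma~\ref{sstrongerror:tech:lemme}; the regular variation of $\gamma$ with exponent $-a$, $a<1$, makes the boundary contributions $\Delta^n_0/\gamma_1$, $\Delta^n_M/\gamma_{M+1}$ and the increments of $1/\gamma_p$ of order $o(\sqrt{M})$ after division by $M$. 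The remainder $R^n_p$, bounded by $C|\Delta^n_p|^2$ thanks to the uniform-in-$n$ Lipschitz continuity of $Dh^n$, satisfies $\E|R^n_p|\le C\gamma(p)$, so that $M^{-1/2}\sum_p\E|R^n_p|\le CM^{-1/2}\sum_p\gamma(p)\to0$ precisely because $a>1/2$.

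The heart of the matter is the martingale CLT for the triangular array $(M^{-1/2}\epsilon^n_{p+1})_p$ in the diagonal regime $M=n^{2\alpha}\to\infty$. Here I would verify a Lindeberg--Feller CLT uniformly in $n$: the conditional variance $\frac{1}{M}\sum_{p=0}^{M-1}\E[\epsilon^n_{p+1}(\epsilon^n_{p+1})^T\mid\mathcal{F}_p]$ converges in probability to $\Gamma(\theta^*)=\E[H(\theta^*,U)H(\theta^*,U)^T]$, which I read off from the local uniform convergence in \A{HI} of $\theta\mapsto\E[H(\theta,U^n)H(\theta,U^n)^T]$ together with $\theta^n_p\to\theta^{*,n}$ and $\theta^{*,n}\to\theta^*$; the Lindeberg condition follows from the uniform $(2+\delta)$-moment bound in \A{HI}, while \A{HR} controls the oscillation of the increments along the trajectory. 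Passing $A_n=Dh^n(\theta^{*,n})\to Dh(\theta^*)$ (local uniform convergence of $Dh^n$ and $\theta^{*,n}\to\theta^*$) through $A_n^{-1}$ produces the announced covariance $Dh(\theta^*)^{-1}\Gamma(\theta^*)(Dh(\theta^*)^{-1})^T$, free of $\gamma_0$. I expect the main obstacle to be keeping every estimate uniform in $n$ while the horizon $M=n^{2\alpha}$ itself diverges with $n$: the uniform $L^2$ (indeed $L^{2+\delta}$) control of Lemma~\ref{sstrongerror:tech:lemme}, the uniform convergences in \A{HI}, and the uniform mean-reversion \A{HMR} are exactly what make this coupled limit tractable.
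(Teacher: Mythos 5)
Your proposal is correct and takes essentially the same approach as the paper: the same decomposition into statistical and implicit errors (the latter via Theorem \ref{thm:conv:disc}), while your uniform-in-$n$ averaging argument — Abel summation of the linearized recursion with the uniform $L^{2}$ bound of Lemma \ref{sstrongerror:tech:lemme}, the remainder term killed by $a>1/2$, and the martingale split into a trajectory-oscillation part controlled by \A{HR} plus a fixed-point array satisfying a Lindeberg CLT via \A{HI}, followed by passing $Dh^{n}(\theta^{*,n})^{-1}\rightarrow Dh(\theta^{*})^{-1}$ — is precisely the paper's proof of Lemma \ref{lem:conv:mean}.
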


  
\begin{LEMME}\label{lem:conv:mean} 
Let $\delta>0$. Under the assumptions of Theorem \ref{globalTCLRupPol}, one has
$$
n^{\alpha} \left( \bar{\theta}^{n^{\delta}}_{n^{2\alpha}} - \theta^{*,n^{\delta}} \right)   \Longrightarrow   \mathcal{N}\left(0, Dh(\theta^*)^{-1} \E[H(\theta^*,U)H(\theta^*,U)^{T}] (Dh(\theta^*)^{-1})^{T}\right), \ \ n\rightarrow + \infty.
$$

\end{LEMME}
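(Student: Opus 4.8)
The plan is to prove the central limit theorem directly at the level of the averaged statistical error, treating the bias parameter $N:=n^{\delta}$ and the number of iterations $M:=n^{2\alpha}$ as jointly tending to infinity; since $n^{\alpha}=\sqrt{M}$, the announced normalization is exactly the $\sqrt{M}$-rate of the Ruppert--Polyak Theorem~\ref{rip:polyak:thm}, and the difficulty is that here the bias parameter grows with $n$, so Theorem~\ref{rip:polyak:thm} cannot be applied for fixed $N$. Writing $\Delta^{N}_p:=\theta^{N}_p-\theta^{*,N}$ and $\zeta^{N}_{p+1}:=H(\theta^{N}_p,(U^{N})^{p+1})-h^{N}(\theta^{N}_p)$ for the martingale increments (so that $\E[\zeta^{N}_{p+1}\mid\mathcal{F}_p]=0$), I would first linearize the dynamics around $\theta^{*,N}$. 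Since $h^{N}(\theta^{*,N})=0$ and $Dh^{N}$ is Lipschitz uniformly in $N$, one has $h^{N}(\theta^{N}_p)=Dh^{N}(\theta^{*,N})\Delta^{N}_p+r^{N}_p$ with $|r^{N}_p|\le C|\Delta^{N}_p|^2$ uniformly in $N$. Summing the rearranged recursion $Dh^{N}(\theta^{*,N})\Delta^{N}_p=-\gamma_{p+1}^{-1}(\Delta^{N}_{p+1}-\Delta^{N}_p)-r^{N}_p-\zeta^{N}_{p+1}$ over $p$ and averaging produces the usual Ruppert--Polyak decomposition
$$
Dh^{N}(\theta^{*,N})\,\bar{\Delta}^{N}_M = A^{N}_M + R^{N}_M - \frac{1}{M+1}\sum_{p=0}^{M}\zeta^{N}_{p+1},
$$
where $\bar{\Delta}^{N}_M=\bar{\theta}^{N}_M-\theta^{*,N}$, the term $A^{N}_M$ gathers the step-increment contributions, and $R^{N}_M$ is the linearization remainder.

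Second, I would show that the two non-martingale contributions are negligible after multiplication by $\sqrt{M}$, \emph{uniformly in} $N$. The crucial input is the uniform $L^2$ control $\sup_{N}\E[|\Delta^{N}_p|^2]\le C\gamma_p$ granted by \A{HMR} through Lemma~\ref{sstrongerror:tech:lemme}. For the remainder one obtains $\sqrt{M}\,\E|R^{N}_M|\lesssim M^{-1/2}\sum_{p\le M}\gamma_p\sim M^{1/2-a}\to0$, which uses $a>1/2$; for $A^{N}_M$, an Abel summation together with the regular variation of $\gamma$ and the bound $\E|\Delta^{N}_p|\lesssim\gamma_p^{1/2}$ yields $\sqrt{M}\,\E|A^{N}_M|\lesssim M^{(a-1)/2}\to0$, which uses $a<1$. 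Thus the exponent window $a\in(1/2,1)$ in \A{HS1} is used in an essential way.

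Third --- and this is the heart of the argument --- I would apply a central limit theorem for triangular arrays of martingale increments (in the spirit of \cite{Duflo1996, Pell:98}) to $\frac{1}{\sqrt{M}}\sum_{p=0}^{M}\zeta^{N}_{p+1}$, and this is precisely where the joint limit $N=n^{\delta}$, $M=n^{2\alpha}$ genuinely interacts. The Lindeberg condition reduces, via the uniform $(2+\delta)$-moment bound of \A{HI} combined with the $L^2$-tightness of $(\theta^{N}_p)$ (itself a consequence of Lemma~\ref{sstrongerror:tech:lemme} and $\sup_N|\theta^{*,N}|<\infty$), to $M^{-\delta/2}\sup_{p,N}\E[|\zeta^{N}_{p+1}|^{2+\delta}]\to0$. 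The predictable bracket $\frac{1}{M}\sum_{p=0}^{M-1}\E[\zeta^{N}_{p+1}(\zeta^{N}_{p+1})^{T}\mid\mathcal{F}_p]$ must converge in probability to $\Gamma(\theta^*)=\E[H(\theta^*,U)H(\theta^*,U)^{T}]$; writing the conditional covariance as $\Gamma^{N}(\theta^{N}_p)-h^{N}(\theta^{N}_p)h^{N}(\theta^{N}_p)^{T}$ with $\Gamma^{N}(\theta)=\E[H(\theta,U^{N})H(\theta,U^{N})^{T}]$, I would split $\Gamma^{N}(\theta^{N}_p)-\Gamma(\theta^*)$ into a deterministic part $\Gamma^{N}(\theta^{*,N})-\Gamma(\theta^*)\to0$ (from the local uniform convergence $\Gamma^{N}\to\Gamma$, continuity of $\Gamma$, and $\theta^{*,N}\to\theta^*$ in \A{HI}) and a random part controlled by the asymptotic equicontinuity of the $\Gamma^{N}$ near $\theta^*$ together with $\E|\Delta^{N}_p|^2\to0$, after which the Cesàro average converges. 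The main obstacle is exactly this bracket convergence: one cannot invoke the almost-sure convergence $\theta^{N}_p\to\theta^{*,N}$ valid only for fixed $N$, and must instead combine the uniform-in-$N$ $L^2$ estimate with the local uniform convergence of the noise covariance.

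Finally, the martingale CLT gives $\frac{1}{\sqrt{M}}\sum_{p=0}^{M}\zeta^{N}_{p+1}\Longrightarrow\mathcal{N}(0,\Gamma(\theta^*))$, and since $Dh^{N}(\theta^{*,N})\to Dh(\theta^*)$ (invertible) by local uniform convergence of $Dh^{N}$ and $\theta^{*,N}\to\theta^*$, Slutsky's lemma together with the inversion of $Dh^{N}(\theta^{*,N})$ yields $\sqrt{M}\,\bar{\Delta}^{N}_M\Longrightarrow\mathcal{N}\!\left(0,Dh(\theta^*)^{-1}\Gamma(\theta^*)(Dh(\theta^*)^{-1})^{T}\right)$, which is the claimed statement since $\sqrt{M}=n^{\alpha}$.
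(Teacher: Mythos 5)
Your overall architecture coincides with the paper's: the paper also linearizes around $\theta^{*,n^{\delta}}$, rewrites the recursion to express $\theta^{n^{\delta}}_p-\theta^{*,n^{\delta}}$ through the one-step increment, the martingale term and the Taylor remainder, and then performs exactly your Abel summation; its Steps 1, 2 and 4 are your estimates on $A^{N}_M$ and $R^{N}_M$, with the same exponents $M^{(a-1)/2}$ and $M^{1/2-a}$, the same use of the window $a\in(1/2,1)$, and the same input $\sup_{N}\E[|\theta^{N}_p-\theta^{*,N}|^2]\leq C\gamma_p$ from Lemma \ref{sstrongerror:tech:lemme}. The final Slutsky step with $(Dh^{n^{\delta}}(\theta^{*,n^{\delta}}))^{-1}\rightarrow Dh(\theta^*)^{-1}$ is also identical.

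The gap is in your Step 3, the martingale CLT at the \emph{moving} point. You invoke ``the uniform $(2+\delta)$-moment bound of \A{HI}'' to verify Lindeberg for $\zeta^{N}_{p+1}=H(\theta^{N}_p,(U^{N})^{p+1})-h^{N}(\theta^{N}_p)$; but \A{HI} only bounds $\E[|H(\theta,U^{n})|^{2+\delta}]$ uniformly over \emph{deterministic} $\theta$ in a fixed ball, while $\theta^{N}_p$ is random and unbounded --- $L^2$-tightness controls the probability of excursions, not the $(2+\delta)$-moment of $H$ on them, and no growth condition at the $(2+\delta)$ level is assumed (\A{HR} is only an $L^2$ modulus). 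Hence $\sup_{p,N}\E[|\zeta^{N}_{p+1}|^{2+\delta}]$ is not available from the stated hypotheses, and the same localization problem makes your ``asymptotic equicontinuity'' treatment of the predictable bracket at $\theta^{N}_p$ non-rigorous as written. The paper circumvents both difficulties by splitting the martingale sum \emph{before} applying any CLT: $\Delta M^{n^{\delta}}_{k+1}$ is written as the difference term $h^{n^{\delta}}(\theta^{n^{\delta}}_k)-h^{n^{\delta}}(\theta^{*,n^{\delta}})-(H(\theta^{n^{\delta}}_k,(U^{n^{\delta}})^{k+1})-H(\theta^{*,n^{\delta}},(U^{n^{\delta}})^{k+1}))$ plus the frozen term $h^{n^{\delta}}(\theta^{*,n^{\delta}})-H(\theta^{*,n^{\delta}},(U^{n^{\delta}})^{k+1})$. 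The difference term is killed in $L^2$ using \A{HR}, Jensen and Lemma \ref{sstrongerror:tech:lemme} (giving $\E[|H(\theta^{n^{\delta}}_k,\cdot)-H(\theta^{*,n^{\delta}},\cdot)|^2]\leq C\gamma_k^{b}$) and Ces\`aro --- no Lindeberg condition is ever needed for it --- while the Hall--Heyde CLT is applied only to the frozen term, a rowwise i.i.d. array at the point $\theta^{*,n^{\delta}}$, which lies in a fixed ball since $\theta^{*,n}\rightarrow\theta^*$; there \A{HI} applies verbatim and the conditional variance is \emph{deterministic}, equal to $\frac{n^{4\alpha}}{(n^{2\alpha}+1)^2}\E[H(\theta^{*,n^{\delta}},U^{n^{\delta}})H(\theta^{*,n^{\delta}},U^{n^{\delta}})^{T}]$, converging to $\Gamma(\theta^*)$ by the local uniform convergence in \A{HI}. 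Inserting this split repairs your argument; as proposed, the CLT step does not go through under the listed assumptions.
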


\begin{proof}[ Proof of Theorem \ref{globalTCLRupPol}] Similarly to the proof of Theorem \ref{globalTCL} we decompose the error as follows:
$$
\bar{\theta}^{n}_{n^{2\alpha}} - \theta^* = \bar{\theta}^{n}_{n^{2\alpha}} - \theta^{*,n}  + \theta^{*,n} - \theta^*.
$$

Applying successively Theorem \ref{thm:conv:disc} and Lemma \ref{lem:conv:mean}, we obtain
$$
n^{\alpha} \left(\bar{\theta}^{n}_{n^{2\alpha}} - \theta^*  \right) \Longrightarrow  -Dh^{-1}(\theta^*) \mathcal{E}(h,\alpha, \theta^*) + \mathcal{N}\left(0, \Sigma^{*}\right).
$$

\end{proof}

The result of Theorem \ref{globalTCLRupPol} shows that for a total error of order $1/n^{\alpha}$, it is necessary to achieve at least $M=n^{2\alpha}$ steps of the SA scheme defined by \eqref{RM} with step sequence satisfying \A{HS1} and to simultaneously compute its empirical mean, which represents a negligible part of the total cost. As a consequence, we see that in this case the complexity of the algorithm is given by
\begin{align*}
C_{\text{SA-RP}}(\gamma) = C \times n^{2\alpha+1}.
\end{align*}

Therefore, the optimal complexity is reached for free without any condition on $\gamma_0$ thanks to the Ruppert \& Polyak averaging principle.


\subsection{The statistical Romberg stochastic approximation method}\label{pres:mlv:sec}
 
In this section we present a two-level SA scheme that will be also referred as the statistical Romberg SA method which allows to minimize the complexity of the SA algorithm $(\theta^{n}_{p})_{p \in \leftB 0, \gamma^{-1}(1/n^{2\alpha})\rightB}$ for the numerical computation of $\theta^{*}$ solution to $h(\theta)=\E[H(\theta,U)]=0$. It is clearly apparent that
$$
\theta^{*,n} = \theta^{*,n^{\beta}} + \theta^{*,n} - \theta^{*,n^{\beta}}, \ \beta \in (0,1).
$$

The statistical Romberg SA scheme independently estimates each of the solutions appearing on the right-hand side in a way that minimizes the computational complexity. Let $\theta^{n^{\beta}}_{M_1}$ be an estimator of $\theta^{*,n^{\beta}}$ using $M_1$ independent samples of $U^{n^{\beta}}$ and $\theta^{n}_{M_2} - \theta^{n^{\beta}}_{M_2}$ be an estimator of $\theta^{*,n}-\theta^{*,n^{\beta}}$ using $M_2$ independent copies of $(U^{n^{\beta}},U^{n})$. Using the above decomposition, we estimate $\theta^{*}$ by the quantity
$$
\Theta^{sr}_{n} = \theta^{n^{\beta}}_{M_1} + \theta^{n}_{M_2} - \theta^{n^{\beta}}_{M_2}.
$$

It is important to point out here that the couple $( \theta^{n}_{M_2}, \theta^{n^{\beta}}_{M_2})$ is computed using i.i.d. copies of $(U^{n^{\beta}},U^{n})$, the random variables $U^{n^{\beta}}$ and $U^{n}$ being perfectly correlated. Moreover, the random variables used to obtain $\theta^{n^{\beta}}_{M_1}$ are independent to those used for the computation of $( \theta^{n}_{M_2}, \theta^{n^{\beta}}_{M_2})$. 

We also establish a central limit theorem for the statistical Romberg based empirical sequence according to the Ruppert \& Polyak averaging principle. It consists in estimating $\theta^{*}$ by
$$
\bar{\Theta}^{sr}_n = \bar{\theta}^{n^{\beta}}_{M_3} + \bar{\theta}^{n}_{M_4} - \bar{\theta}^{n^{\beta}}_{M_4},
$$

\noindent where $(\bar{\theta}^{n^{\beta}}_p)_{p \in \leftB 0, M_3 \rightB}$ and $(\bar{\theta}^{n}_{p},\bar{\theta}^{n^{\beta}}_{p})_{p \in \leftB 0, M_4 \rightB}$ are respectively the empirical means of the sequences $(\theta^{n^{\beta}}_p)_{p \in \leftB 0,M_3 \rightB}$ and $(\theta^{n}_{p},\theta^{n^{\beta}}_{p})_{p \in \leftB 0,M_4 \rightB}$ devised with the same slow decreasing step, that is a step sequence $(\gamma(p))_{p\geq1}$ where $\gamma$ varies regularly with exponent $(-a)$, $a \in (1/2,1)$.


\begin{THM}\label{TWO:LVL:SA} Suppose that $h$ and $h^n$ satisfy the assumptions of Theorem \ref{thm:conv:disc} with $\alpha \in (\rho \vee 2\rho\beta,1]$ and that $h$ satisfies the assumptions of Theorem \ref{CLT_SA}. Assume that \A{HWR1}, \A{HSR}, \A{HD}, \A{HMR}, \A{HDH} and \A{HLH} hold and that $h^{n}$ are twice continuously differentiable in a neighborhood of $\theta^*$, with $Dh^{n}$ Lipschitz-continuous uniformly in $n$ satisfying: 
$$
\forall \theta \in \R^d, \ n^{\rho} \|Dh^{n}(\theta) -Dh(\theta)\| \rightarrow 0, \ \ \mbox{ as } n\rightarrow + \infty.
$$

  Suppose that $\tilde{\E} \left[(D_xH(\theta^*, U) V) (D_xH(\theta^*, U) V)^T\right]$ is a positive definite matrix. Assume that the step sequence is given by $\gamma_p = \gamma(p)$, $p\geq1$, where $\gamma$ is a positive function defined on $[0, + \infty[$ decreasing to zero, satisfying one of the following assumptions:
\begin{itemize}
\item $\gamma$ varies regularly with exponent $(-a)$, $a \in (1/2,1)$, that is, for any $x>0$, $\lim_{t\rightarrow +\infty}\gamma(tx)/\gamma(t)=x^{-a}$.

\item for $t\geq1$, $\gamma(t)=\gamma_0/t$ and $\gamma_0$ satisfies $
 \underline{\lambda} \gamma_0 > \alpha/(2\alpha - 2\rho \beta)$. 
 
\end{itemize}

Then, for $M_1=\gamma^{-1}(1/n^{2\alpha})$ and $M_2 = \gamma^{-1}(1/(n^{2\alpha-2\rho\beta}))$, one has
$$
n^{\alpha}(\Theta^{sr}_n - \theta^{*})  \Longrightarrow  Dh^{-1}(\theta^*) \mathcal{E}(h,\alpha, \theta^*) + \mathcal{N}(0,\Sigma^{*}), \ \ n\rightarrow +\infty
$$

\noindent with 
\begin{align*}
\Sigma^*& := \int_0^{\infty} \left(e^{-s(Dh(\theta^*)- \zeta I_d)}\right)^{T} ( \E\left[H(\theta^*,U)H(\theta^*,U)^{T}\right]  \\
 &   +   \tilde{\E}\left[\left(D_xH(\theta^{*}, U) V-\tilde{\E}[D_xH(\theta^{*}, U) V]\right) \left(D_xH(\theta^{*}, U) V-\tilde{\E}[D_xH(\theta^{*}, U) V]\right)^{T}\right] ) e^{-s(Dh(\theta^*)- \zeta I_d)} ds \label{asympt:cov:romberg}
\end{align*}

\end{THM}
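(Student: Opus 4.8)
The plan is to isolate the deterministic implicit bias, a first statistical block, and a correction block, and to show that after rescaling by $n^{\alpha}$ the first two are delivered by results already established while the correction block produces the genuinely new part of the covariance. Using the identity $\theta^{*,n}=\theta^{*,n^{\beta}}+(\theta^{*,n}-\theta^{*,n^{\beta}})$, write
$$
\Theta^{sr}_n-\theta^{*}=\underbrace{(\theta^{*,n}-\theta^{*})}_{\text{bias}}+\underbrace{(\theta^{n^{\beta}}_{M_1}-\theta^{*,n^{\beta}})}_{=:A_n}+\underbrace{\big[(\theta^{n}_{M_2}-\theta^{*,n})-(\theta^{n^{\beta}}_{M_2}-\theta^{*,n^{\beta}})\big]}_{=:B_n}.
$$
By construction $A_n$ and $B_n$ are built from independent families of innovations. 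Theorem \ref{thm:conv:disc} gives $n^{\alpha}(\theta^{*,n}-\theta^{*})\to -Dh^{-1}(\theta^{*})\mathcal{E}(h,\alpha,\theta^{*})$, and Lemma \ref{lem:conv:opt:tradeoff} applied with $\delta=\beta$ (legitimate, since the step hypotheses imposed here imply \A{HS1}/\A{HS2}, and \A{HLH} implies \A{HR}) yields $n^{\alpha}A_n\Longrightarrow\mathcal{N}(0,\Sigma^{*}_1)$, where $\Sigma^{*}_1$ is the integral against $\E[H(\theta^{*},U)H(\theta^{*},U)^{T}]$. It then remains to prove $n^{\alpha}B_n\Longrightarrow\mathcal{N}(0,\Sigma^{*}_2)$, the integral against the innovation covariance, and to add the two independent Gaussian limits.

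For $B_n$ I would follow the heuristic of the introduction and introduce the purely theoretical scheme $(\theta_p)$ driven by the exact innovations $(U^{p})$, coupled to $(\theta^{n}_p)$ and $(\theta^{n^{\beta}}_p)$ and converging to $\theta^{*}$. Setting $V^{n}_p:=\theta^{n}_p-\theta_p$ and $V^{n^{\beta}}_p:=\theta^{n^{\beta}}_p-\theta_p$, one has the exact rewriting $B_n=(V^{n}_{M_2}-V^{n^{\beta}}_{M_2})-(\theta^{*,n}-\theta^{*,n^{\beta}})$. The decisive bookkeeping is the separation of scales: $V^{n}_p$ is forced at order $n^{-\rho}$ and $V^{n^{\beta}}_p$ at order $n^{-\rho\beta}$, so with $M_2=\gamma^{-1}(n^{-(2\alpha-2\rho\beta)})$, i.e. $\gamma(M_2)^{1/2}=n^{-(\alpha-\rho\beta)}$, the fluctuation of $n^{\alpha}V^{n}_{M_2}$ is of order $n^{-\rho(1-\beta)}\to0$ while that of $n^{\alpha}V^{n^{\beta}}_{M_2}$ is $O_{\P}(1)$. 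The deterministic parts $\E V^{n}_{M_2}\simeq\theta^{*,n}-\theta^{*}$ and $\E V^{n^{\beta}}_{M_2}\simeq\theta^{*,n^{\beta}}-\theta^{*}$ cancel exactly against $\theta^{*,n}-\theta^{*,n^{\beta}}$, so that $B_n$ is asymptotically centred; making this cancellation rigorous at scale $n^{-\alpha}$ relies on the uniform-in-$n$ control of the residual stochastic-approximation bias, for which I would invoke Lemma \ref{sstrongerror:tech:lemme} together with \A{HMR}, and on $\alpha>2\rho\beta$ to ensure $M_2$ is large enough.

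The core is then the CLT for $n^{\alpha}V^{n^{\beta}}_{M_2}$. I would linearise its recursion by splitting each increment into a \emph{space} part $H(\theta^{n^{\beta}}_p,(U^{n^{\beta}})^{p+1})-H(\theta_p,(U^{n^{\beta}})^{p+1})$ and an \emph{innovation} part $H(\theta_p,(U^{n^{\beta}})^{p+1})-H(\theta_p,U^{p+1})$. The space part has conditional mean $\simeq Dh^{n^{\beta}}(\theta_p)V^{n^{\beta}}_p$, which with $Dh^{n^{\beta}}\to Dh$, $\theta_p\to\theta^{*}$ produces the effective drift $Dh(\theta^{*})-\zeta I_d$ ($\zeta=0$ under the regularly varying step, $\zeta=1/(2\gamma_0)$ under $\gamma_0/t$); its martingale part, evaluated at the equilibrium, is controlled by $|\theta^{*,n^{\beta}}-\theta^{*}|=O(n^{-\alpha\beta})$ via \A{HLH}, hence is $o(n^{-\rho\beta})$ and drops out of the limiting bracket exactly because $\alpha>\rho$. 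The innovation part, using \A{HDH}, \A{HLH} and a first-order expansion in $x$, equals $n^{-\rho\beta}D_xH(\theta^{*},U^{p+1})V^{p+1}$ to leading order by \A{HWR1}, its quadratic remainder being controlled in $L^{2}$ through \A{HSR} and absorbed into the cancelling deterministic part. One is thereby reduced to a linear stochastic-approximation recursion forced by the centred noise $n^{-\rho\beta}\big(D_xH(\theta^{*},U)V-\tilde{\E}[D_xH(\theta^{*},U)V]\big)$, and a Lindeberg--Feller martingale CLT in the spirit of Theorem \ref{CLT_SA} gives $n^{\alpha}V^{n^{\beta}}_{M_2}\Longrightarrow\mathcal{N}(0,\Sigma^{*}_2)$ with $\Sigma^{*}_2$ the announced integral; the lower bound on $\gamma_0$ is precisely what secures the CLT rate $\gamma(M_2)^{-1/2}$ for this block. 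Finally Lemma \ref{lemme:conv:stab} and the independence of the two blocks let me add the stable limits, yielding $n^{\alpha}(\Theta^{sr}_n-\theta^{*})\Longrightarrow -Dh^{-1}(\theta^{*})\mathcal{E}(h,\alpha,\theta^{*})+\mathcal{N}(0,\Sigma^{*}_1+\Sigma^{*}_2)$.

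The main obstacle is the step just described: rigorously lifting the \emph{one-step} stable convergence $n^{\rho}(U^{n}-U)\Longrightarrow V$ of \A{HWR1} to a CLT for the whole rescaled trajectory $(n^{\alpha}V^{n^{\beta}}_p)_{p\le M_2}$. This is delicate because the innovation noise $D_xH(\theta_p,(U^{n^{\beta}})^{p+1})\big((U^{n^{\beta}})^{p+1}-U^{p+1}\big)$ is evaluated along the random, $n$-dependent trajectory $\theta_p$ and summed over a number $M_2\to\infty$ of steps that itself grows with $n$. One must simultaneously show, through uniform $L^{2}$ estimates, that substituting $\theta^{*}$ for $\theta_p$ and $Dh$ for $Dh^{n^{\beta}}$ costs only negligible errors, that the predictable bracket of the driving martingale converges to the deterministic integrand defining $\Sigma^{*}_2$, and that the stable nature of the limit---$V$ living on an extension and independent of the block producing $\Sigma^{*}_1$---is preserved in the joint convergence.
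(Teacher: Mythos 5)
Your proposal is correct and takes essentially the same route as the paper: the identical three-block decomposition (bias handled by Theorem \ref{thm:conv:disc}, the first statistical block by Lemma \ref{lem:conv:opt:tradeoff} applied with $\delta=\beta$, and the correction block recentred through the phantom scheme $(\theta_p)$ driven by the exact innovations $(U^p)$), followed by the same linearization and martingale CLT for the rescaled coupled error --- which is precisely the content of the paper's Lemma \ref{conv:lem:2step:sr}, including your scale bookkeeping showing the $n$-level fluctuation vanishes at rate $n^{-\rho(1-\beta)}$ while the $n^{\beta}$-level one carries the limit. Note also that your final limit $-Dh^{-1}(\theta^*)\mathcal{E}(h,\alpha,\theta^*)+\mathcal{N}(0,\Sigma^*)$ agrees with what the paper's own proof derives; the missing minus sign in the theorem's displayed statement is a typo there, not an error on your part.
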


\begin{LEMME}\label{conv:lem:2step:sr} Let $(\theta_p)_{p\geq0}$ be the procedure defined for $p\geq0$ by 
\begin{equation} \label{alg:sto:diff}
\theta_{p+1} = \theta_p - \gamma_{p+1} H(\theta_p, (U)^{p+1})
\end{equation}

\noindent where $((U^{n})^p, (U)^p)_{p\geq1}$ is an i.i.d sequence of random variables with the same law as $(U^{n},U)$, $(\gamma_p)_{p\geq1}$ is the step sequence of the procedure $(\theta^{n^{\beta}}_p)_{p\geq0}$ and $(\theta^{n}_p)_{p\geq0}$ and $\theta_0$ is independent of the innovation satisfying $\E|\theta_0|^2 < + \infty$. Under the assumptions of Theorem \ref{TWO:LVL:SA}, one has
$$
n^{\alpha} \left(\theta^{n^{\beta}}_{\gamma^{-1}(1/(n^{2\alpha-\beta}))} - \theta_{\gamma^{-1}(1/(n^{2\alpha-\beta}))} - (\theta^{*,n^{\beta}}-\theta^{*}) \right) \Longrightarrow \mathcal{N}(0, \Theta^{*}), \ \ n\rightarrow +\infty,
$$

\noindent with 
$$
\Theta^* := \int_0^{\infty} \left(e^{-s(Dh(\theta^*)- \zeta I_d)}\right)^{T}  \tilde{\E}\left[\left(D_xH(\theta^{*}, U) V-\tilde{\E}[D_xH(\theta^{*}, U) V]\right) \left(D_xH(\theta^{*}, U) V-\tilde{\E}[D_xH(\theta^{*}, U) V]\right)^{T}\right] e^{-s(Dh(\theta^*)- \zeta I_d)} ds
$$

\noindent  and
$$
n^{\alpha} \left(\theta^{n}_{\gamma^{-1}(1/(n^{2\alpha-\beta}))} - \theta_{\gamma^{-1}(1/(n^{2\alpha-\beta}))} - (\theta^{*,n}-\theta^{*}) \right) \overset{\P}{\longrightarrow} 0, \ \ n\rightarrow +\infty.
$$
  
\end{LEMME}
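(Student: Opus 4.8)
The plan is to analyze the two coupled difference processes by linearizing each around the common equilibrium $\theta^*$ and reducing to a martingale central limit theorem whose noise level is set by the innovation increments. Since \A{HD} is in force we have $\rho=1/2$, so the running index $M:=\gamma^{-1}(1/n^{2\alpha-\beta})$ satisfies $\gamma(M)=n^{-(2\alpha-\beta)}$ and $\gamma(M)^{-1/2}=n^{\alpha-\beta/2}$; this is precisely the exponent that will combine with the innovation scaling to produce the announced normalisation $n^{\alpha}$. I first treat the coarse scheme. Writing $Z^{n^{\beta}}_p:=\theta^{n^{\beta}}_p-\theta_p$ and subtracting the (constant in $p$) target gap, I introduce the centered error $\delta_p:=Z^{n^{\beta}}_p-(\theta^{*,n^{\beta}}-\theta^*)$, for which the goal is the rate $n^{\alpha}$ convergence to $\mathcal{N}(0,\Theta^*)$.

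Subtracting the two recursions and adding and subtracting $H(\theta_p,(U^{n^{\beta}})^{p+1})$ splits the increment into a \emph{space} part $H(\theta^{n^{\beta}}_p,(U^{n^{\beta}})^{p+1})-H(\theta_p,(U^{n^{\beta}})^{p+1})$ and an \emph{innovation} part $H(\theta_p,(U^{n^{\beta}})^{p+1})-H(\theta_p,(U)^{p+1})$. For the conditional drift I expand $h^{n^{\beta}}(\theta^{n^{\beta}}_p)-h(\theta_p)$ around the zeros $\theta^{*,n^{\beta}}$ and $\theta^*$; using that $h^{n^{\beta}}$ is twice differentiable with $Dh^{n^{\beta}}$ Lipschitz uniformly in $n$, that $Dh^{n^{\beta}}\to Dh$ locally uniformly, and that $\theta^{*,n^{\beta}}\to\theta^*$ (Theorem \ref{thm:conv:disc}), the bias terms cancel against the target gap and leave $\E[\,\cdot\mid\mathcal{F}_p]=Dh(\theta^*)\delta_p+r_p$, with $r_p$ quadratic in $(\theta^{n^{\beta}}_p-\theta^{*,n^{\beta}},\,\theta_p-\theta^*,\,\delta_p)$. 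Hence
$$
\delta_{p+1}=\delta_p-\gamma_{p+1}Dh(\theta^*)\delta_p-\gamma_{p+1}\xi_{p+1}-\gamma_{p+1}r_p,
$$
where $\xi_{p+1}$ is the martingale increment whose dominant part is the centered innovation difference $H(\theta_p,(U^{n^{\beta}})^{p+1})-H(\theta_p,(U)^{p+1})-\E[\,\cdot\mid\mathcal{F}_p]$, the space fluctuation being of higher order in $|\delta_p|$.

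The heart of the argument is a Lindeberg--Feller martingale CLT applied to the rescaled error $\tilde\delta_p:=n^{\rho\beta}\delta_p$, which obeys the same linear recursion driven by the order-one increment $n^{\rho\beta}\xi_{p+1}$. By \A{HDH} and \A{HLH} the innovation difference cannot be Taylor-expanded pointwise, so I feed the stable convergence \A{HWR1} (at bias level $n^{\beta}$, giving $n^{\rho\beta}(U^{n^{\beta}}-U)\Rightarrow V$) together with the uniform integrability \A{HSR} directly into the bracket computation: the conditional covariance $n^{2\rho\beta}\E[\xi_{p+1}\xi_{p+1}^T\mid\mathcal{F}_p]$ converges, in the joint $(n,p)$ regime and at $\theta_p\to\theta^*$, to $\tilde{\E}[(D_xH(\theta^*,U)V-c)(D_xH(\theta^*,U)V-c)^T]$ with $c:=\tilde{\E}[D_xH(\theta^*,U)V]=\lim_n n^{\rho\beta}(h^{n^{\beta}}(\theta^*)-h(\theta^*))$ accounting for the centering, and the reweighted accumulation of these brackets produces the $\zeta$-shifted integral defining $\Theta^*$ exactly as in Theorem \ref{CLT_SA}. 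The remainder $r_p$ is shown negligible through the uniform-in-$n$ $L^2$ bound on $\theta^{n^{\beta}}_p-\theta^{*,n^{\beta}}$ furnished by \A{HMR} via Lemma \ref{sstrongerror:tech:lemme}, so its accumulated contribution at time $M$ is $o(n^{-\alpha})$. Standard stability and gain-weighting arguments for the linear recursion then give $\gamma(p)^{-1/2}\tilde\delta_p\Rightarrow\mathcal{N}(0,\Theta^*)$; since $\gamma(M)^{-1/2}n^{\rho\beta}=n^{\alpha-\beta/2+\rho\beta}=n^{\alpha}$ when $\rho=1/2$, evaluating at $p=M$ yields $n^{\alpha}\delta_M\Rightarrow\mathcal{N}(0,\Theta^*)$.

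The second, in-probability, statement for the fine scheme follows from the identical decomposition applied to $\delta^n_p:=(\theta^n_p-\theta_p)-(\theta^{*,n}-\theta^*)$, the only change being that the innovation increment is now of order $n^{-\rho}$ rather than $n^{-\rho\beta}$. Repeating the computation gives a tight Gaussian limit for $\gamma(p)^{-1/2}n^{\rho}\delta^n_p$, whence $n^{\alpha}\delta^n_M=n^{-(1-\beta)/2}\,O_{\P}(1)\to 0$ because $\gamma(M)^{-1/2}n^{\rho}=n^{\alpha+(1-\beta)/2}$ and $\beta\in(0,1)$, $\rho=1/2$. The main obstacle is the joint $(n,p)$ regime combined with the lack of $x$-smoothness of $H$: one cannot linearise the innovation difference by a pointwise Taylor expansion and must instead transfer the stable convergence of the rescaled increments into the convergence of the summed predictable bracket of the martingale, while simultaneously controlling, uniformly in $n$, the quadratic space remainder $r_p$ by means of the technical $L^2$ estimates of Section \ref{technical:res:sec}.
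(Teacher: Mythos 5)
Your overall architecture is the paper's: linearize both recursions around their respective targets, propagate the difference $z^{n^{\beta}}_p=\theta^{n^{\beta}}_p-\theta_p-(\theta^{*,n^{\beta}}-\theta^{*})$ through the products $\Pi_{k,n}=\prod_{j=k}^{n}(I_d-\gamma_j Dh(\theta^*))$, kill the initial condition and remainders with Lemmas \ref{sstrongerror:tech:lemme} and \ref{stepseq:tech:lemme}, run a martingale CLT (Lindeberg plus predictable bracket) on the innovation part fed by \A{HWR1}, \A{HSR}, \A{HLH}, identify $\Theta^*$ via the Lyapunov equation, and get the second assertion from the scaling $\gamma(M)^{-1/2}n^{\rho}=n^{\alpha+(1-\beta)/2}$ (your exponent bookkeeping, including $\gamma(M)^{-1/2}n^{\rho\beta}=n^{\alpha}$ for $\rho=1/2$, is correct).

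However, there is a genuine gap at the decisive step. You split the increment by adding and subtracting $H(\theta_p,(U^{n^{\beta}})^{p+1})$, so your martingale noise is the innovation difference evaluated at the \emph{moving} point $\theta_p$, and you then assert that $n^{2\rho\beta}\E[\xi_{p+1}\xi_{p+1}^{T}\,|\,\mathcal{F}_p]$ converges to the centered bracket ``in the joint $(n,p)$ regime and at $\theta_p\to\theta^*$''. Nothing in the hypotheses delivers this: \A{HWR1} is stable convergence of $n^{\rho}(U^{n}-U)$ and yields information on $H(\theta,U^{n^{\beta}})-H(\theta,U)$ only at a \emph{fixed} $\theta$, through \A{HDH} whose differentiability set $\mathcal{D}_{H,\theta}$ depends on $\theta$; and \A{HLH} bounds the substitution error $|H(\theta_p,u')-H(\theta_p,u)-(H(\theta^*,u')-H(\theta^*,u))|$ only by $C(1+|u|^{r}+|u'|^{r})|\theta_p-\theta^*|$, with no gain of a factor $|u'-u|$, so after multiplication by $n^{\rho\beta}$ this error is not pointwise negligible. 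The paper's decomposition exists precisely to avoid this: its $\Delta N^{n^{\beta}}_k$ carries $H(\theta^{*},\cdot)$ only, so the bracket computation takes place at the fixed $\theta^*$, where \A{HDH} licenses the a.s.\ expansion $n^{\rho\beta}(H(\theta^{*},U^{n^{\beta}})-H(\theta^{*},U))=D_xH(\theta^{*},U)\,n^{\rho\beta}V^{n^{\beta}}+\psi\, n^{\rho\beta}V^{n^{\beta}}$ with $\psi\overset{\P}{\longrightarrow}0$, while all $\theta$-dependence is shunted into the remainder $\Delta R^{n^{\beta}}_k$, whose centered conditional variance is $O(\E|\theta^{n^{\beta}}_k-\theta^{*,n^{\beta}}|^2+|\theta^{*,n^{\beta}}-\theta^{*}|^2)$ and vanishes under the weights by Lemmas \ref{sstrongerror:tech:lemme} and \ref{stepseq:tech:lemme}. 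Your proof is repaired by this one re-centering (the tools you already invoke suffice), but as written the bracket step fails. Two smaller inaccuracies: your $r_p$ is not purely quadratic --- it contains the linear mismatch $(Dh(\theta^{*})-Dh^{n^{\beta}}(\theta^{*,n^{\beta}}))(\theta^{n^{\beta}}_p-\theta^{*,n^{\beta}})$, whose negligibility at rate $n^{\alpha}$ requires the quantitative hypothesis $n^{\rho}\|Dh^{n}(\theta)-Dh(\theta)\|\to 0$ of Theorem \ref{TWO:LVL:SA}, not mere local uniform convergence of $(Dh^{n})_{n\geq1}$; and your parenthetical claim that a pointwise Taylor expansion of the innovation difference is impossible is wrong at $\theta^{*}$ --- \A{HDH} is imposed exactly to permit it there, and that is how the paper computes the limit bracket.
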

\begin{proof}[Proof of Theorem \ref{TWO:LVL:SA}]
We first write the following decomposition
$$
\Theta^{sr}_n - \theta^{*} =  \theta^{n^{\beta}}_{\gamma^{-1}(1/n^{2\alpha})} - \theta^{*,n^{\beta}} + \theta^{n}_{\gamma^{-1}(1/n^{2\alpha-2\rho\beta})} - \theta^{n^{\beta}}_{\gamma^{-1}(1/n^{2\alpha-2\rho\beta})} - (\theta^{*,n} - \theta^{*,n^{\beta}}) + \theta^{*,n}-\theta^{*}
$$

For the last term of the above sum, we use Theorem \ref{thm:conv:disc} to directly deduce
$$
n^{\alpha} (\theta^{*,n} - \theta^{*}) \rightarrow -Dh^{-1}(\theta^*) \mathcal{E}(h,\alpha, \theta^*), \ as \ n\rightarrow +\infty.
$$

For the first term, from Lemma \ref{lem:conv:opt:tradeoff} it follows
$$
n^{\alpha} (\theta^{n^{\beta}}_{\gamma^{-1}(1/n^{2\alpha})} - \theta^{*,n^{\beta}})  \Longrightarrow \mathcal{N}(0, \Gamma^{*}), 
$$ 

\noindent with $\Gamma^*:= \int_0^{\infty} \exp\left(-s(Dh(\theta^*)- \zeta I_d)\right)^{T} \E[H(\theta^*,U)H(\theta^*,U)^{T}] \exp\left(-s(Dh(\theta^*)-\zeta I_d)\right) ds$. We decompose the last remaining term, namely $ \theta^{n}_{\gamma^{-1}(1/n^{2\alpha-2\rho\beta})} - \theta^{n^{\beta}}_{\gamma^{-1}(1/n^{2\alpha-2\rho\beta})} - (\theta^{*,n} - \theta^{*,n^{\beta}})$ as follows
\begin{align*}
\theta^{n}_{\gamma^{-1}(1/n^{2\alpha-2\rho\beta})} - \theta^{n^{\beta}}_{\gamma^{-1}(1/n^{2\alpha-2\rho\beta})} - (\theta^{*,n} - \theta^{*,n^{\beta}}) & = \theta^{n}_{\gamma^{-1}(1/n^{2\alpha-2\rho\beta})} - \theta_{\gamma^{-1}(1/n^{2\alpha-2\rho\beta})} - (\theta^{*,n} - \theta^*)  \\
& - (\theta^{n^{\beta}}_{\gamma^{-1}(1/n^{2\alpha-2\rho\beta})} - \theta_{\gamma^{-1}(1/n^{2\alpha-2\rho\beta})} - (\theta^{*,n^{\beta}} - \theta^*) )
\end{align*}

\noindent and use Lemma \ref{conv:lem:2step:sr} to conclude the proof.
\end{proof}

\begin{THM}\label{TWO:LVL:SA:AV} Suppose that $h$ and $h^n$ satisfy the assumptions of Theorem \ref{thm:conv:disc} (with $\alpha \in (\rho\vee 2\rho\beta,1]$) and that $h$ satisfies the assumptions of Theorem \ref{CLT_SA}. Assume that the step sequence $\gamma=(\gamma_p)_{p\geq1}$ satisfies \A{HS1} with $a \in (1/2,1)$ and $a> \frac{\alpha}{2\alpha-2\rho\beta} \vee \frac{\alpha(1-\beta)}{(\alpha-\rho\beta)}$. Suppose that \A{HWR1}, \A{HSR}, \A{HD}, \A{HMR}, \A{HDH} and \A{HLH} hold and that $h^{n}$ is twice continuously differentiable in a neighborhood of $\theta^*$, with $Dh^{n}$ Lipschitz-continuous uniformly in $n$ satisfying: 
\begin{equation}
\label{weak:conv:jacob}
\forall \theta \in \R^d, \ n^{\alpha - (\alpha-\rho\beta)a} \|Dh(\theta)-Dh^{n^{\beta}}(\theta)\| \rightarrow 0, \ \ \mbox{ as } n\rightarrow + \infty.
\end{equation}

Suppose that $\tilde{\E}\left[(D_xH(\theta^*, U) V-\tilde{\E}[D_xH(\theta^{*}, U) V]) (D_xH(\theta^*, U) V-\tilde{\E}[D_xH(\theta^{*}, U) V])^T\right]$ is a positive definite matrix. Then, for $M_3=n^{2\alpha}$ and $M_4=n^{2\alpha-2\rho\beta}$, one has
$$
n^{\alpha} (\bar{\Theta}^{sr}_n - \theta^*) \Longrightarrow Dh^{-1}(\theta^*) \mathcal{E}(h,\alpha, \theta^*) + \mathcal{N}(0, \bar{\Sigma}^{*}), \ \ n\rightarrow + \infty,
$$

\noindent where 
\begin{align*}
\bar{\Sigma}^* & := Dh(\theta^*)^{-1} (\E\left[H(\theta^*,U)H(\theta^*,U)^{T}\right]  \\
& +  \tilde{\E}\left[ \left(D_xH(\theta^{*}, U) V-\tilde{\E}[D_xH(\theta^{*}, U) V]\right) \left(D_xH(\theta^{*}, U) V-\tilde{\E}[D_xH(\theta^{*}, U) V]\right)^{T}\right] ) (Dh(\theta^*)^{-1})^{T}.
\end{align*}

  
\begin{LEMME}\label{lem:conv:aver} Let $(\bar{\theta}_p)_{p\geq1}$ be the empirical mean sequence associated to $(\theta_p)_{p\geq1}$ defined by \eqref{alg:sto:diff}.Under the assumptions of Theorem \ref{TWO:LVL:SA:AV}, one has
$$
n^{\alpha} \left( \bar{\theta}^{n^{\beta}}_{n^{2\alpha-2\rho\beta}} - \bar{\theta}_{n^{2\alpha-2\rho\beta}} - (\theta^{*,n^{\beta}}-\theta^{*}) \right) \Longrightarrow  \mathcal{N}(0, \bar{\Theta}^*)
$$

\noindent with $\bar{\Theta}^*=Dh(\theta^*)^{-1}   \tilde{\E}\left[ \left(D_xH(\theta^{*}, U) V-\tilde{\E}[D_xH(\theta^{*}, U) V]\right) \left(D_xH(\theta^{*}, U) V-\tilde{\E}[D_xH(\theta^{*}, U) V]\right)^{T}\right] (Dh(\theta^*)^{-1})^{T}$ and
$$
 n^{\alpha} \left( \bar{\theta}^{n}_{n^{2\alpha-2\rho\beta}} - \bar{\theta}_{n^{2\alpha-2\rho\beta}} - (\theta^{*,n}-\theta^{*}) \right) \overset{\P}{\longrightarrow} 0.
$$

\end{LEMME}

\begin{proof}[Proof of Theorem \ref{TWO:LVL:SA:AV}] We decompose the error as follows
$$
\bar{\Theta}^{sr}_n - \theta^* = \bar{\theta}^{n^{\beta}}_{n^{2\alpha}} - \theta^{*,n^{\beta}} + \bar{\theta}^{n}_{n^{2\alpha-2\rho\beta}} - \bar{\theta}^{n^{\beta}}_{n^{2\alpha-2\rho\beta}} - (\theta^{*,n}-\theta^{*,n^{\beta}}) + \theta^{*,n} - \theta^*.
$$

For the first term, from Lemma \ref{lem:conv:mean} it follows that
$$
n^{\alpha} (\bar{\theta}^{n^{\beta}}_{n^{2\alpha}} - \theta^{*,n^{\beta}}) \Longrightarrow \mathcal{N}(0, Dh(\theta^*)^{-1} \E\left[H(\theta^*,U)H(\theta^*,U)^{T}\right] (Dh(\theta^*)^{-1})^{T}).
$$

For the last term using Theorem \ref{thm:conv:disc}, we have $n^{\alpha}(\theta^{*,n} - \theta^*) \rightarrow  -Dh^{-1}(\theta^*) \mathcal{E}(h,\alpha, \theta^*)$. We now focus on the last remaining term, namely $\bar{\theta}^{n}_{n^{2\alpha-2\rho\beta}} - \bar{\theta}^{n^{\beta}}_{n^{2\alpha-2\rho\beta}} - (\theta^{*,n}-\theta^{*,n^{\beta}})$. We decompose it as follows
\begin{align*}
\bar{\theta}^{n}_{n^{2\alpha-2\rho\beta}} - \bar{\theta}^{n^{\beta}}_{n^{2\alpha-2\rho\beta}} - (\theta^{*,n}-\theta^{*,n^{\beta}}) & =  \bar{\theta}^{n}_{n^{2\alpha-2\rho\beta}} - \bar{\theta}_{n^{2\alpha-2\rho\beta}} - (\theta^{*,n}-\theta^{*})  -  (\bar{\theta}^{n^{\beta}}_{n^{2\alpha-2\rho\beta}} - \bar{\theta}_{n^{2\alpha-2\rho\beta}} - (\theta^{*,n^{\beta}}-\theta^{*}))
\end{align*}

\noindent where $(\bar{\theta}_p)_{p\geq 1}$ is the empirical mean sequence associated to $(\theta_p)_{p\geq1}$ and use Lemma \ref{lem:conv:aver} to conclude the proof.
\end{proof}

\end{THM}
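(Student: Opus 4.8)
The plan is to reduce the theorem to the three convergence results already available by splitting $\bar{\Theta}^{sr}_n - \theta^*$ into a fine-level averaged estimator, an averaged correction term, and the deterministic implicit bias. Concretely, I would start from
$$
\bar{\Theta}^{sr}_n - \theta^* = \left(\bar{\theta}^{n^{\beta}}_{n^{2\alpha}} - \theta^{*,n^{\beta}}\right) + \left(\bar{\theta}^{n}_{n^{2\alpha-2\rho\beta}} - \bar{\theta}^{n^{\beta}}_{n^{2\alpha-2\rho\beta}} - (\theta^{*,n}-\theta^{*,n^{\beta}})\right) + \left(\theta^{*,n} - \theta^*\right),
$$
so that after multiplication by $n^\alpha$ the three blocks may be analyzed separately. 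The last block is deterministic and, by Theorem \ref{thm:conv:disc}, $n^\alpha(\theta^{*,n}-\theta^*)\to -Dh^{-1}(\theta^*)\mathcal{E}(h,\alpha,\theta^*)$; this is exactly the non-centering term appearing in the limit.

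For the first block I would invoke Lemma \ref{lem:conv:mean} with $\delta=\beta$ and $M_3=n^{2\alpha}$, which gives
$$
n^\alpha\left(\bar{\theta}^{n^{\beta}}_{n^{2\alpha}} - \theta^{*,n^{\beta}}\right)\Longrightarrow \mathcal{N}\left(0, Dh(\theta^*)^{-1}\E[H(\theta^*,U)H(\theta^*,U)^{T}](Dh(\theta^*)^{-1})^{T}\right).
$$
The middle block is the genuinely nonlinear correction term, and the idea is to introduce the auxiliary averaged scheme $(\bar{\theta}_p)$ driven by the exact innovations $U$ (coupled through the common path) and to rewrite the correction as a difference of two coupled deviations,
$$
\bar{\theta}^{n}_{M_4} - \bar{\theta}^{n^{\beta}}_{M_4} - (\theta^{*,n}-\theta^{*,n^{\beta}}) = \left(\bar{\theta}^{n}_{M_4} - \bar{\theta}_{M_4} - (\theta^{*,n}-\theta^*)\right) - \left(\bar{\theta}^{n^{\beta}}_{M_4} - \bar{\theta}_{M_4} - (\theta^{*,n^{\beta}}-\theta^*)\right),
$$
with $M_4=n^{2\alpha-2\rho\beta}$. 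Lemma \ref{lem:conv:aver} then delivers precisely the two pieces I need: the coarse piece (indexed by $n^\beta$) converges in law to $\mathcal{N}(0,\bar{\Theta}^*)$, while the fine piece (indexed by $n$) converges to $0$ in probability, since its innovation strength $n^{-\rho}$ combined with the averaging over $M_4$ steps leaves a residual of order $n^{-\rho(1-\beta)}$ after scaling by $n^\alpha$, which vanishes because $\beta<1$.

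The final step is to combine the three limits. The crucial structural input is that the samples producing $\bar{\theta}^{n^\beta}_{M_3}$ are independent of those producing the couple $(\bar{\theta}^n_{M_4},\bar{\theta}^{n^\beta}_{M_4})$; hence the Gaussian limit of the first block and the Gaussian limit of the correction block are independent, so I would upgrade the two marginal convergences to joint convergence of the pair and then discard the probability-null fine piece through Lemma \ref{lemme:conv:stab} (Slutsky). Summing two independent centered Gaussians adds their covariances, yielding $\bar{\Sigma}^*=Dh(\theta^*)^{-1}\E[H(\theta^*,U)H(\theta^*,U)^{T}](Dh(\theta^*)^{-1})^{T}+\bar{\Theta}^*$, and adding the deterministic shift from Theorem \ref{thm:conv:disc} identifies the full limit. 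The main obstacle is not this bookkeeping but the content hidden in Lemma \ref{lem:conv:aver}: proving the CLT for the averaged coupled correction requires linearizing the averaged recursions around $\theta^*$, showing that the leading fluctuation is carried by the innovation difference $U^{n^\beta}-U$ (whose normalization $n^{\rho\beta}(U^{n^\beta}-U)\Longrightarrow V$ comes from \A{HWR1}) through the Jacobian $D_xH(\theta^*,U)$, and controlling the space-variable nonlinearities via \A{HR}, \A{HLH} and the weak-convergence assumption \eqref{weak:conv:jacob} on $Dh^{n}$ so that they do not contribute at order $n^{-\alpha}$.
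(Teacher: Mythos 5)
Your proposal is correct and follows essentially the same route as the paper: the identical three-term decomposition, Lemma \ref{lem:conv:mean} for the coarse averaged block, Theorem \ref{thm:conv:disc} for the deterministic bias, and the rewriting of the correction term via the auxiliary averaged scheme $(\bar{\theta}_p)_{p\geq1}$ handled by Lemma \ref{lem:conv:aver}. Your added remarks (the independence of the two sample blocks justifying the addition of covariances, and the order $n^{-\rho(1-\beta)}$ estimate explaining why the fine piece vanishes) merely make explicit what the paper leaves implicit.
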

\subsection{The multi-level stochastic approximation method}\label{ml:sec}

As mentioned in the introduction the multi-level SA method uses $L+1$ stochastic schemes with a sequence of bias parameter $(m^{\ell})_{\ell\in \leftB0,L\rightB}$, for a fixed integer $m\geq2$, that satisfies $m^{L}=n$ and estimates $\theta^{*}$ by computing the quantity
$$
\Theta^{ml}_{n} = \theta^{1}_{M_0} + \sum_{\ell=1}^{L} \left(\theta^{m^{\ell}}_{M_\ell} - \theta^{m^{\ell -1}}_{M_\ell}\right).
$$

It is important to point out here that for each level $\ell$ the couple $(\theta^{m^{\ell}}_{M_\ell}, \theta^{m^{\ell -1}}_{M_\ell})$ is computed using i.i.d. copies of $(U^{m^{\ell-1}},U^{m^{\ell}})$. Moreover the random variables $U^{m^{\ell-1}}$ and $U^{m^{\ell}}$ use two different bias parameter but are perfectly correlated. Moreover, for two different levels, the SA schemes are based on independent samples.

\begin{THM}\label{MLVL:SA} Suppose that $h$ and $h^{m^{\ell}}$, $\ell=0, \cdots, L$, satisfy the assumptions of Theorem \ref{thm:conv:disc}. Assume that \A{HWR2}, \A{HSR}, \A{HD}, \A{HMR}, \A{HDH} and \A{HLH} hold and that $h^{n}$ is twice continuously differentiable in a neighborhood of $\theta^*$, with $Dh^{n}$ Lipschitz-continuous uniformly in $n$. Suppose that $\tilde{\E}[ (D_xH(\theta^*, U) V-\tilde{\E}[D_xH(\theta^*, U) V]) (D_xH(\theta^*,U) V-\tilde{\E}[D_xH(\theta^*, U) V])^T]$ is a positive definite matrix. Assume that the step sequence is given by $\gamma_p = \gamma(p)$, $p\geq1$, where $\gamma$ is a positive function defined on $[0, + \infty[$ decreasing to zero, satisfying one of the following assumptions:
\begin{itemize}
\item $\gamma$ varies regularly with exponent $(-a)$, $a \in (1/2,1)$, that is, for any $x>0$, $\lim_{t\rightarrow +\infty}\gamma(tx)/\gamma(t)=x^{-a}$.

\item for $t\geq1$, $\gamma(t)=\gamma_0/t$ and $\gamma_0$ satisfies $
 \underline{\lambda} \gamma_0 >  1$.
 
\end{itemize}

Suppose that $\rho$ satisfies one of the following assumptions: 
\begin{itemize}
\item if $\rho\in (0,1/2)$, then assume that $\alpha>2\rho$, $\underline{\lambda} \gamma_0 > \alpha/(\alpha-2\rho)$ (if $\gamma(t)=\gamma_0/t$) and
$$
\exists \beta>\rho, \ \ \forall \theta \in \R^d, \  \sup_{n\geq1} n^{\beta} \|Dh^{n}(\theta) -Dh(\theta)\| < + \infty.
$$

\noindent In this case we set $M_0=\gamma^{-1}(1/n^{2\alpha})$ and $M_l=\gamma^{-1}(m^{\ell\frac{(1+2\rho)}{2}}(m^{\frac{1-2\rho}{2}}-1)/(n^{2\alpha} (n^{\frac{(1-2\rho)}{2}}-1)) )$, $\ell=1, \cdots, L$.

\item if $\rho =1/2$, then assume that $\alpha=1$, $\theta^{m^{\ell}}_0=\theta_0$, $\ell=1,\cdots,L$, with $\E[|\theta_0|^2]< +\infty$ and 
$$
\exists \beta>1/2, \ \ \forall \theta \in \R^d, \ \sup_{n\geq1}n^{\beta} \|Dh^{n}(\theta) -Dh(\theta)\| < + \infty.
$$

\noindent In this case we set $M_0=\gamma^{-1}(1/n^{2})$ and $M_l=\gamma^{-1}(m^{\ell}\log(m)/(n^{2}\log(n)(m-1) ) )$, $\ell=1, \cdots, L$.
\end{itemize}

\noindent Then one has

$$
n^{\alpha} (\Theta^{ml}_n - \theta^{*})  \Longrightarrow  -Dh^{-1}(\theta^*) \mathcal{E}(h,1, \theta^*) + \mathcal{N}(0,\Sigma^{*}), \ \ n\rightarrow +\infty
$$

\noindent with 
\begin{align*}
\Sigma^* & := \int_0^{\infty} \left(e^{-s(Dh(\theta^*)- \zeta I_d)}\right)^{T} ( \E\left[H(\theta^*,U^{1})H(\theta^*,U^{1})^{T}\right]  \\
& +  \tilde{\E}\left[ \left(D_xH(\theta^{*}, U) V-\tilde{\E}[D_xH(\theta^*, U) V]\right) \left(D_xH(\theta^{*}, U) V-\tilde{\E}[D_xH(\theta^*, U) V]\right)^{T}\right] ) e^{-s(Dh(\theta^*)- \zeta I_d)} ds 
\end{align*}

\end{THM}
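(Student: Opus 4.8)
The plan is to mirror the proof of Theorem~\ref{TWO:LVL:SA}, splitting $\Theta^{ml}_n-\theta^*$ into an implicit discretization error and a statistical error, the novelty being that the number of levels $L=\log n/\log m$ now diverges with $n$. First I would write
$$\Theta^{ml}_n-\theta^*=(\Theta^{ml}_n-\theta^{*,n})+(\theta^{*,n}-\theta^*),$$
and dispatch the second summand by Theorem~\ref{thm:conv:disc}, which gives $n^{\alpha}(\theta^{*,n}-\theta^*)\to -Dh^{-1}(\theta^*)\mathcal{E}(h,\alpha,\theta^*)$ (with $\alpha=1$ in the case $\rho=1/2$). For the statistical part I would exploit the telescoping identity $\theta^{*,n}=\theta^{*,1}+\sum_{\ell=1}^L(\theta^{*,m^\ell}-\theta^{*,m^{\ell-1}})$ together with the independence of the $L+1$ blocks to obtain
$$\Theta^{ml}_n-\theta^{*,n}=(\theta^1_{M_0}-\theta^{*,1})+\sum_{\ell=1}^L\left[(\theta^{m^\ell}_{M_\ell}-\theta^{m^{\ell-1}}_{M_\ell})-(\theta^{*,m^\ell}-\theta^{*,m^{\ell-1}})\right],$$
a sum of independent summands to which I would apply a Lindeberg--Feller CLT in the spirit of \cite{ben:keb:12}.

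The level-zero term $\theta^1_{M_0}-\theta^{*,1}$ is the statistical error of the fixed bias-one scheme, so since $M_0=\gamma^{-1}(1/n^{2\alpha})$ (equivalently $\gamma(M_0)=n^{-2\alpha}$, rate $n^{\alpha}$), the single-level CLT of Theorem~\ref{CLT_SA} delivers the $\E[H(\theta^*,U^1)H(\theta^*,U^1)^T]$ block of $\Sigma^*$. The core of the argument is a per-level analog of Lemma~\ref{conv:lem:2step:sr}: for each $\ell$ I introduce an auxiliary SA scheme $(\theta_p)$ driven by a common innovation and couple both $(\theta^{m^\ell}_p)$ and $(\theta^{m^{\ell-1}}_p)$ to it, decomposing the centered difference as
$$\left(\theta^{m^\ell}_{M_\ell}-\theta_{M_\ell}-(\theta^{*,m^\ell}-\theta^*)\right)-\left(\theta^{m^{\ell-1}}_{M_\ell}-\theta_{M_\ell}-(\theta^{*,m^{\ell-1}}-\theta^*)\right).$$
I would then linearize the recursion of $\theta^{m^\ell}_p-\theta^{m^{\ell-1}}_p$ around $\theta^*$, splitting the increment into a space contribution, tamed by the mean-reverting assumption \A{HMR}, the uniform Lipschitz regularity of $Dh^n$ and the stability estimates of Section~\ref{technical:res:sec}, and an innovation contribution whose leading part is $D_xH(\theta^*,U^{m^{\ell-1}})(U^{m^\ell}-U^{m^{\ell-1}})$. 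The normalization $m^{\ell\rho}(U^{m^\ell}-U^{m^{\ell-1}})\Rightarrow V^m$ from \A{HWR2}, combined with \A{HD}, \A{HDH} and \A{HLH}, controls this part; the conditional mean $\tilde{\E}[D_xH(\theta^*,U)V]$ is exactly matched by the recentering around $\theta^{*,m^\ell}-\theta^{*,m^{\ell-1}}$, so only the centered fluctuation $D_xH(\theta^*,U)V-\tilde{\E}[D_xH(\theta^*,U)V]$ survives in the limit.

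Summing over levels is where the prescribed $M_\ell$ pays off. After the $n^{\alpha}$ scaling, the variance of the $\ell$-th summand behaves like $n^{2\alpha}\gamma(M_\ell)m^{-2\ell\rho}\tilde{\E}[(D_xH(\theta^*,U)V^m)(D_xH(\theta^*,U)V^m)^T]$; the second moment of $V^m$ carries, via Theorem~\ref{weak:conv:euler:2step} and \A{HWR2}, a factor of order $(m-1)$ relative to the limit built from $V$, so that the $m^{\ell}$ and $(m-1)$ dependences cancel against $\gamma(M_\ell)$. In the case $\rho=1/2$, $\alpha=1$, the choice $M_\ell=\gamma^{-1}(m^{\ell}\log m/(n^2\log n(m-1)))$ then produces $L$ nearly identical contributions, each of order $\log m/\log n=1/L$, whose sum converges to the $\tilde{\E}[(D_xH(\theta^*,U)V-\tilde{\E}[D_xH(\theta^*,U)V])(\cdot)^{T}]$ block of $\Sigma^*$, while keeping the complexity at $n^2(\log n)^2$. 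The assumption $\sup_n n^{\beta}\|Dh^n-Dh\|<\infty$ with $\beta>\rho$ guarantees that replacing $Dh^{m^\ell}$ by $Dh$ and $\theta^{*,m^\ell}$ by $\theta^*$ in each level-wise linearization produces remainders that remain summable over the $L$ levels.

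The hard part, and the genuine departure from the two-level case, is that $L\asymp\log n$ diverges: the linearization remainders and the $(2+\delta)$-moment bounds (furnished by \A{HSR} and \A{HI}) must be obtained uniformly in $\ell$, so that the aggregated remainder is $o_{\P}(n^{-\alpha})$ and the Lindeberg condition for the resulting triangular array indexed by the levels holds. Verifying that the prescribed $M_\ell$ yields at once a negligible aggregated bias, a finite non-degenerate limiting variance equal to $\Sigma^*$, and the announced complexity is the crux of the computation; the positive-definiteness hypothesis on $\tilde{\E}[(D_xH(\theta^*,U)V-\tilde{\E}[D_xH(\theta^*,U)V])(\cdot)^{T}]$ then ensures the limit law is non-degenerate.
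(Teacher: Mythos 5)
Your proposal is correct and follows essentially the same route as the paper's proof: the same three-way decomposition (bias handled by Theorem \ref{thm:conv:disc}, level zero by Theorem \ref{CLT_SA}, and the sum of centered level differences by a per-level linearization around $\theta^*$ whose martingale innovation term, analyzed via \A{HWR2} and a triangular-array Lindeberg condition with the variance summation driven by the prescribed $M_\ell$, yields the Gaussian block of $\Sigma^*$ — this is exactly Lemma \ref{conv:lem:2step}). The only cosmetic difference is that you couple each pair $(\theta^{m^{\ell}},\theta^{m^{\ell-1}})$ through an auxiliary scheme $(\theta_p)$ as in the two-level Lemma \ref{conv:lem:2step:sr}, whereas the paper differences adjacent levels directly; since the auxiliary terms cancel telescopically and you immediately recombine to the increment $U^{m^{\ell}}-U^{m^{\ell-1}}$, the analysis is the same.
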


\begin{proof}
We first write the following decomposition
$$
\Theta^{ml}_n - \theta^{*} =  \theta^{1}_{\gamma^{-1}(1/n^{2})} - \theta^{*,1} + \sum_{\ell=1}^{L} \left( \theta^{m^{\ell}}_{M_\ell} - \theta^{m^{\ell-1}}_{M_{\ell}} - (\theta^{*,m^{\ell}} - \theta^{*,m^{\ell-1}}) \right) + \theta^{*,n}-\theta^{*}
$$

For the last term of the above sum, we use Theorem \ref{thm:conv:disc} to directly deduce
$$
n^{\alpha} (\theta^{*,n} - \theta^{*}) \rightarrow -Dh^{-1}(\theta^*) \mathcal{E}(h,1, \theta^*), \ as \ n\rightarrow +\infty.
$$

For the first term, the standard CLT (theorem \ref{CLT_SA}) for stochastic approximation leads to
$$
n^{\alpha} (\theta^{1}_{\gamma^{-1}(1/n^{2\alpha})} - \theta^{*,1})  \Longrightarrow \mathcal{N}(0, \Gamma^{*}), 
$$ 

\noindent with $\Gamma^*:= \int_0^{\infty} \exp\left(-s(Dh(\theta^*)- \zeta I_d)\right)^{T}  \E\left[H(\theta^*, U^{1})H(\theta^*,U^{1})^{T}\right]\exp\left(-s(Dh(\theta^*)-\zeta I_d)\right) ds$. To deal with the last remaining term, namely $n^{\alpha} \sum_{\ell=1}^{L} \left(\theta^{m^{\ell}}_{M_{\ell}} - \theta^{m^{\ell-1}}_{M_{\ell}} - (\theta^{*,m^{\ell}} - \theta^{*,m^{\ell-1}})\right)$
we will need the following lemma.
\end{proof}

\begin{LEMME}\label{conv:lem:2step} Under the assumptions of Theorem \ref{TWO:LVL:SA}, one has
$$
n^{\alpha} \sum_{\ell=1}^{L} \left(\theta^{m^{\ell}}_{M_{\ell}} - \theta^{m^{\ell-1}}_{M_{\ell}} - (\theta^{*,m^{\ell}} - \theta^{*,m^{\ell-1}}) \right) \Longrightarrow \mathcal{N}(0, \Theta^{*}), \ \ n\rightarrow +\infty,
$$

\noindent with 
\begin{align}
\Theta^*  := \int_0^{\infty} & (e^{-s(Dh(\theta^*)- \zeta I_d)})^{T}  \tilde{\E}\left[ \left(D_xH(\theta^{*}, U) V^m - \tilde{\E}[D_xH(\theta^{*}, U) V^{m}] \right) \left(D_xH(\theta^{*}, U) V^m - \tilde{\E}[D_xH(\theta^{*}, U) V^{m}]\right)^{T}\right] \nonumber  \\
& \times e^{-s(Dh(\theta^*)-\zeta I_d)} ds. \label{asympt:cov:mlvl}
\end{align}
  
\end{LEMME}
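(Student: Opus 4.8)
The plan is to exploit the independence of the $L+1$ stochastic schemes across levels. For each fixed $n$ (equivalently each $L$ with $m^{L}=n$) the summands
$$
Z^{\ell}_n := \theta^{m^{\ell}}_{M_{\ell}} - \theta^{m^{\ell-1}}_{M_{\ell}} - (\theta^{*,m^{\ell}} - \theta^{*,m^{\ell-1}}), \qquad \ell = 1, \dots, L,
$$
are built from independent samples, so $n^{\alpha}\sum_{\ell=1}^{L} Z^{\ell}_n$ is a sum of independent random vectors forming a triangular array (row $n$ having $L(n)$ entries). The whole argument then reduces to a per-level linearization isolating a tractable leading term, followed by a Lindeberg--Feller central limit theorem for the resulting array, whose limiting covariance is $\Theta^{*}$. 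The key point creating the small per-level variance is that $\theta^{m^{\ell}}_{M_{\ell}}$ and $\theta^{m^{\ell-1}}_{M_{\ell}}$ are driven by the \emph{perfectly correlated} innovations $(U^{m^{\ell-1},p}, U^{m^{\ell},p})$.

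First I would linearize each $Z^{\ell}_n$ around the common target. Subtracting the two recursions \eqref{RM} at bias levels $m^{\ell}$ and $m^{\ell-1}$ and writing, via \A{HDH} and \A{HLH},
$$
H(\theta, U^{m^{\ell}}) - H(\theta, U^{m^{\ell-1}}) = \Big( \int_0^1 D_xH\big(\theta,\, U^{m^{\ell-1}} + t(U^{m^{\ell}} - U^{m^{\ell-1}})\big)\,dt \Big)\,(U^{m^{\ell}} - U^{m^{\ell-1}}),
$$
the dominant contribution to $Z^{\ell}_n$ is the statistical error of an auxiliary SA scheme whose martingale increment at the equilibrium is the centered innovation noise $D_xH(\theta^{*}, U^{m^{\ell-1},p})(U^{m^{\ell},p} - U^{m^{\ell-1},p})$. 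By \A{HWR2} this increment is of order $m^{-\ell\rho}$ and, renormalized by $m^{\ell\rho}$, converges stably to $D_xH(\theta^{*}, U)V^{m}$. The contributions from the non-linearity in the space variables $(\theta^{m^{\ell}}_p, \theta^{m^{\ell-1}}_p)$ and from the discrepancy between $Dh^{n}$ and $Dh$ must then be shown negligible after multiplication by $n^{\alpha}$; here I would invoke the uniform $L^2$-control of $\E[|\theta^{m^{\ell}}_p - \theta^{*,m^{\ell}}|^2]$ afforded by \A{HMR} (Lemma \ref{sstrongerror:tech:lemme}), together with the Jacobian rate $\sup_{n} n^{\beta}\|Dh^{n} - Dh\| < +\infty$ with $\beta > \rho$.

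Next I would assemble the CLT. Applying a result of the type of Theorem \ref{CLT_SA} to the level-$\ell$ difference scheme — whose equilibrium noise covariance is $m^{-2\ell\rho}\,\tilde{\E}[(D_xH(\theta^{*},U)V^{m} - \tilde{\E}[D_xH(\theta^{*},U)V^{m}])(\,\cdot\,)^{T}]$ by \A{HWR2} — shows that the covariance of $Z^{\ell}_n$ is, to leading order, of size $\gamma(M_{\ell})\,m^{-2\ell\rho}$ and directed along the integral kernel appearing in $\Theta^{*}$. Multiplying by $n^{2\alpha}$ and summing, the prescribed allocation $M_{\ell} = \gamma^{-1}(\cdots)$ is precisely tuned so that the weighted sum $\sum_{\ell=1}^{L} n^{2\alpha}\,\mathrm{Cov}(Z^{\ell}_n)$ converges to $\Theta^{*}$; in the critical case $\rho = 1/2$, $\alpha = 1$, each level contributes a comparable share of order $1/\log n$, which both explains the $\log$ correction in $M_{\ell}$ and forces $L \sim \log n/\log m$ levels. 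Finally the Lindeberg condition for the array follows from the uniform $(2+\delta)$-integrability of $n^{\rho}(U^{n} - U)$ granted by \A{HSR} together with \A{HI}, while the positive-definiteness hypothesis on $\tilde{\E}[(D_xH(\theta^{*},U)V^{m} - \tilde{\E}[\cdots])(\cdots)^{T}]$ ensures the limit is non-degenerate.

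The main obstacle is twofold and lies in the variance assembly. First, the per-level CLT is only asymptotic in the number of steps, whereas here the bias index $\ell$ and the step budget $M_{\ell}$ both diverge with $n$ simultaneously; one therefore needs the linearization and the variance estimate to hold \emph{uniformly} in the bias parameter $m^{\ell}$, which is exactly what the uniform mean-reversion \A{HMR} and the uniform moment and Jacobian controls are designed to furnish. Second, and more delicate, is showing that the sum of the per-level covariances converges to $\Theta^{*}$ rather than to some other multiple of the kernel: this requires reconciling the fact that \A{HWR2} delivers the limit $V^{m}$ only as $\ell \to +\infty$, while the small-$\ell$ levels still carry non-negligible mass, and it is precisely the (logarithmic) tuning of $M_{\ell}$ that equalizes and balances the per-level contributions. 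This step mirrors the Lindeberg--Feller analysis of Ben Alaya and Kebaier \cite{ben:keb:12} for multi-level Monte Carlo and is the heart of the argument.
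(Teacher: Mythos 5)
Your proposal follows essentially the paper's own route: linearize each level around its targets so as to isolate the innovation-difference martingale increments $\Delta N^{\ell}_k := h^{m^{\ell}}(\theta^*)-h^{m^{\ell-1}}(\theta^*)-\bigl(H(\theta^*,(U^{m^{\ell}})^{k+1})-H(\theta^*,(U^{m^{\ell-1}})^{k+1})\bigr)$, kill the remainder terms (initial condition, second-order terms $\zeta^{\ell}_{k}$, Jacobian discrepancy via $\beta>\rho$, and the space-nonlinearity martingale $\Delta R^{\ell}_k$) with the uniform $L^2$ control of Lemma \ref{sstrongerror:tech:lemme} and the kernel estimates of Lemma \ref{stepseq:tech:lemme}, and then run a CLT in which the per-level conditional variance is of size $\gamma(M_\ell)m^{-2\rho\ell}$ and directed along the solution $\Theta^*$ of the Lyapunov equation. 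Two cosmetic differences: the paper verifies a \emph{conditional} Lindeberg condition via Burkholder's inequality on the full double array $\sum_{\ell}\sum_{k}$ rather than your Lindeberg--Feller across independent levels (with independence across levels and martingale structure within, the computations coincide), and your worry about the early levels is resolved exactly as you guess: the suitably normalized per-level variances $S_\ell$ converge to $\Theta^*$ as $\ell\to+\infty$, and the allocation $M_\ell$ turns $\sum_{\ell\le L} S_\ell$ into a Toeplitz-weighted average (geometric weights for $\rho<1/2$, uniform weights $\sim 1/L$ for $\rho=1/2$), so the finitely many low levels wash out --- this is the paper's closing Ces\`aro argument.

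One concrete defect in your write-up: the mean-value representation $H(\theta,U^{m^{\ell}})-H(\theta,U^{m^{\ell-1}})=\bigl(\int_0^1 D_xH(\theta,U^{m^{\ell-1}}+t(U^{m^{\ell}}-U^{m^{\ell-1}}))\,dt\bigr)(U^{m^{\ell}}-U^{m^{\ell-1}})$ is not licensed by the standing assumptions. \A{HDH} only asserts that $x\mapsto H(\theta,x)$ is differentiable \emph{at the random point} $U$ almost surely, not along the segment joining $U^{m^{\ell-1}}$ and $U^{m^{\ell}}$, and \A{HLH} gives Lipschitz regularity in $\theta$, not in $x$. A function differentiable Lebesgue-a.e.\ with $D_xH=0$ off a null set (think of $H(\theta,x)=\1_{\left\{x\le \theta\right\}}$, which motivates the weak form of \A{HDH}) would make your integral formula return zero while the left-hand side does not vanish. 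The paper instead expands \emph{both} terms at $U$: $m^{\rho\ell}\bigl(H(\theta^*,U^{m^{\ell}})-H(\theta^*,U^{m^{\ell-1}})\bigr)=D_xH(\theta^*,U)\,m^{\rho\ell}(U^{m^{\ell}}-U^{m^{\ell-1}})+\psi(\theta^*,U,U^{m^{\ell}}-U)\,m^{\rho\ell}(U^{m^{\ell}}-U)+\psi(\theta^*,U,U^{m^{\ell-1}}-U)\,m^{\rho\ell}(U^{m^{\ell-1}}-U)$ with $\psi\overset{\P}{\longrightarrow}0$, then uses tightness of the renormalized increments together with the uniform $(2+\delta)$-moment bounds from \A{HLH}--\A{HSR} to upgrade the stable convergence of \A{HWR2} into convergence of the first two moments, which is precisely what the variance assembly requires. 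Substituting this pointwise expansion for your segment integral repairs the step without changing anything else in your plan.
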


\begin{REM} The previous result shows that a CLT for the multi-level stochastic approximation estimator of $\theta^*$ holds if the standard weak error (and thus the implicit weak error), is of order $1/n^{\alpha}$ and the strong rate error is of order $1/n^{\rho}$ with $\alpha>\rho$ or $\alpha=1$ and $\rho=1/2$. Due to the non-linearity of the procedures, which leads to annoying remainder terms in the Taylor's expansions, those results do not seem to easily extend to a weak discretization error of order $1/n^{\alpha}$ with $\alpha < 1$ and $\rho=1/2$ or a faster strong convergence rate $\rho>1/2$. Moreover, for the same reason this result does not seem to extend to the empirical sequence associated to the multi-level estimator according to the Ruppert \& Polyak averaging principle.
\end{REM}

\subsection{Complexity Analysis}\label{complex:anal:sec}
The result of Theorem \ref{TWO:LVL:SA} can be interpreted as follows. For a total error of order $1/n^{\alpha}$, it is necessary to set $M_1= \gamma^{-1}(1/n^{2\alpha})$ steps of a stochastic algorithm with time step $n^{\beta}$ and $M_2 = \gamma^{-1}(1/(n^{2\alpha-2\rho\beta}))$ steps of two stochastic algorithms with time step $n$ and $n^{\beta}$ using the same Brownian motion, the samples used for the first $M_1$ steps being independent of those used for the second scheme. Hence, the complexity of the statistical Romberg stochastic approximation method is given by
\begin{equation}
\label{Compl:SR-SA}
C_{\text{SR-SA}}(\gamma) = C \times (n^{\beta} \gamma^{-1}(1/n^{2\alpha}) + (n + n^{\beta})\gamma^{-1}(1/(n^{2\alpha-2\rho\beta}))) 
\end{equation}

\noindent under the constraint: $\alpha > 2\rho\beta \vee \rho$. Consequently, concerning the impact of the step sequence $(\gamma_n)_{n \geq1}$ on the complexity of the procedure we have the two following cases:
\begin{itemize}
\item If we choose $\gamma(p) = \gamma_0/p$ then simple computations show that $\beta^{*}=1/(1+2\rho)$ is the optimal choice leading to a complexity 
$$
C_{\text{SR-SA}}(\gamma) = C' n^{2\alpha+1/(1+2\rho)}, 
$$

\noindent under the constraint $ \underline{\lambda} \gamma_0 > \alpha(1+2\rho)/(2\alpha(1+2\rho)-2\rho)$ and $\alpha > 2\rho/(1+2\rho)$. Let us note that this computational cost is similar to the one achieved by the statistical Romberg Monte Carlo method for the computation of $\E_x[f(X_T)]$.

\item If we choose $\gamma(p) = \gamma_0/p^{a}$, $\frac12 < a < 1$ then the computational cost is given by
$$
C_{\text{SR-SA}}(\gamma) = C' (n^{\frac{2\alpha}{a}+\beta} + n^{\frac{2 \alpha}{a}-\frac{\beta}{a} +1} )
$$ 

\noindent which is minimized for $\beta^*= a/(2\rho+a)$ leading to an optimal complexity
$$
C_{\text{SR-SA}}(\gamma) = C' n^{\frac{2\alpha}{a}+\frac{a}{2\rho+a}}.
$$

\noindent under the constraint $\alpha>2\rho a/(a+2\rho) \vee \rho$.
Observe that this complexity decreases with respect to $a$ and that it is minimal for $a \rightarrow 1$ leading to the optimal computational cost obtained in the previous case. Let us also point out that contrary to the case $\gamma(p)=\gamma_0/p$, $p\geq1$ there is no constraint on the choice of $\gamma_0$. Moreover, such condition is difficult to handle in practical implementation so that a blind choice has often to be made.

\end{itemize}

The CLT proved in Theorem \ref{TWO:LVL:SA:AV} shows that for a total error of order $1/n^{\alpha}$, it is necessary to set $M_1 = n^{2\alpha}$, $M_2= n^{2\alpha-2\rho\beta}$ and to simultaneously compute its empirical mean, which represents a negligible part of the total cost. Both stochastic approximation algorithm are devised with a step $\gamma$ satisfying \A{HS1} with $a \in (1/2,1)$ and $a> \frac{\alpha}{2\alpha-2\rho\beta} \vee \frac{\alpha(1-\beta)}{\alpha-\rho\beta}$. It is plain to see that $\beta^{*}=1/(1+2\rho)$ is the optimal choice leading to a complexity given by
\begin{align*}
C_{\text{SR-RP}}(\gamma) = C \times n^{2\alpha+1/(1+2\rho)}, 
\end{align*}

\noindent provided that $a> \frac{\alpha(1+2\rho)}{2\alpha +2 \rho (2\alpha-1)}$ and $\forall \theta \in \R^d, \ \ n^{\alpha - (\alpha- \frac{\rho}{1+2\rho})a} \|Dh(\theta) - Dh^{n^{1/(1+2\rho)}}(\theta)\| \rightarrow 0$ as $n\rightarrow +\infty$ (note that when $a \rightarrow 1$ this condition is the same as in Theorem \ref{TWO:LVL:SA}). For instance, if $\alpha=1$ and $\rho=1/2$, then this condition writes $a>2/3$ and $n^{1 - \frac{3}{4}a} \|Dh(\theta) - Dh^{n^{1/2}}(\theta)\| \rightarrow 0$ and $a$ should be selected sufficiently close to $1$ according to the weak discretization error of the Jacobian matrix of $h$. Therefore, the optimal complexity is reached for free without any condition on $\gamma_0$ thanks to the Ruppert \& Polyak averaging principle. Let us also note that ought we do not intend to develop this point, it is possible to prove that averaging allows to achieve the optimal asymptotic covariance matrix as for standard SA algorithms.

Finally, concerning the CLT provided in Theorem \ref{MLVL:SA} shows that in order to obtain an error of order $1/n^{\alpha}$, one has to set $M_0=\gamma^{-1}(1/n^{2\alpha})$ and $M_l=\gamma^{-1}(m^{\ell\frac{(1+2\rho)}{2}}(m^{\frac{1-2\rho}{2}}-1)/(n^{2\alpha} (n^{\frac{(1-2\rho)}{2}}-1)) )$, if $\rho\in(0,1/2)$ or $M_0=\gamma^{-1}(1/n^{2})$ and $M_l=\gamma^{-1}(m^{\ell}\log(m)/(n^{2}\log(n)(m-1) ) )$ if $\rho=1/2$, $\ell=1, \cdots, L$. In both cases the complexity of the multi-level SA method is given by
\begin{equation}
\label{Compl:ML-SA}
C_{\text{ML-SA}}(\gamma) = C \times \left(\gamma^{-1}(1/n^{2\alpha}) + \sum_{\ell=1}^{L} M_\ell (m^{\ell}+m^{\ell-1}) \right).
\end{equation}

As for the Statistical Romberg SA method, we distinguish the two following cases:
\begin{itemize}
\item If $\gamma(p) = \gamma_0/p$ then the optimal complexity is given by
$$
C_{\text{ML-SA}}(\gamma) = C \left(n^{2\alpha} + \frac{n^2 (n^{\frac{(1-2\rho)}{2}}-1)}{m^{\frac{1-2\rho}{2}}-1}\sum_{\ell=1}^L m^{-\frac{(1+2\rho)}{2}\ell} (m^{\ell}+m^{\ell-1}) \right)=\O(n^{2\alpha} n^{1-2\rho}), 
$$

\noindent if $\rho \in (0,1/2)$ under the constraint $\underline{\lambda}\gamma_0>\alpha(\alpha-2\rho)$ and
$$
C_{\text{ML-SA}}(\gamma) = C \left(n^{2} + n^2 (\log n )^2 \frac{m^2-1}{m (\log m)^2} \right)=\O(n^{2} (\log(n))^2), 
$$

\noindent if $\rho=1/2$ under the constraint $ \underline{\lambda} \gamma_0 > 1$. These computational costs are similar to those achieved by the multi-level Monte Carlo method for the computation of $\E_x[f(X_T)]$, see \cite{Giles:08} and \cite{ben:keb:12}. As discussed in \cite{Giles:08}, this complexity attains a minimum near $m=7$.

\item If we choose $\gamma(p) = \gamma_0/p^{a}$, $\frac12 < a < 1$ then simple computations show that the computational cost is given by
$$
C_{\text{ML-SA}}(\gamma) = C \left(n^{\frac{2\alpha}{a}} + n^{\frac{2}{a}} (n^{1-2\rho}-1)^{\frac{1}{a}} \sum_{\ell=1}^L m^{-\frac{(1+2\rho)}{a}\ell} (m^{\ell}+m^{\ell-1}) \right)=\O(n^{\frac{2\alpha}{a}} n^{\frac{1-2\rho}{a}}), 
$$

\noindent if $\rho \in (0,1/2)$ and
$$
C_{\text{ML-SA}}(\gamma) = C \times \left(n^{\frac{2}{a}} + n^{\frac{2}{a}} (\log n)^{\frac{1}{a}} \frac{(m-1)^{\frac{1}{a}}(m+1)}{m (\log m)^{\frac{1}{a}}} \sum_{\ell=1}^{L} m^{-\ell(\frac{1}{a}-1)} \right)= \O(n^{\frac{2}{a}} (\log n)^{\frac{1}{a}})
$$ 

\noindent if $\rho=1/2$.
Observe that once again these computational costs decrease with respect to $a$ and that they are minimal for $a \rightarrow 1$ leading to the optimal computational cost obtained in the previous case. In this last case, the optimal choice for the parameter $m$ depends on the value of $a$.
\end{itemize}

\begin{REM}The value of $M_{0}$ in Theorem \ref{MLVL:SA} seems arbitrary and is asymptotically suboptimal. Indeed choosing $M_0=\gamma^{1}(1/(n^{2\alpha}n^{1-2\rho}))$ for $\rho\in (0,1/2)$ and $M_0=\gamma^{-1}(1/(n^2\log(n)))$ for $\rho=1/2$ does not change the asymptotic computational complexity and simplifies the asymptotic covariance matrix $\Sigma^{*}$. One easily proves that $n^{\alpha}(\theta^{1}_{M_0}-\theta^{*,1})$ converges to $0$ in probability so that $\Sigma^{*}$ now writes
\begin{align*}
\Sigma^*  := \int_0^{\infty}&  \left(e^{-s(Dh(\theta^*)- \zeta I_d)}\right)^{T}  \tilde{\E}\left[ \left(D_xH(\theta^{*}, U) V-\tilde{\E}[D_xH(\theta^*, U) V]\right) \left(D_xH(\theta^{*}, U) V-\tilde{\E}[D_xH(\theta^*, U) V]\right)^{T}\right] \\
& \times e^{-s(Dh(\theta^*)- \zeta I_d)} ds.
\end{align*}
\end{REM}
\section{Proofs of main results}\label{proof:sec}

\subsection{Proof of Theorem \ref{thm:conv:disc}}
We first prove that $\theta^{*,n}\rightarrow \theta^*$, $n\rightarrow + \infty$. Let $\epsilon>0$. The mean-reverting assumption \eqref{MeanRevert_Ass} and the continuity of $u \mapsto \langle u, h(\theta^* + \epsilon u) \rangle$ on the (compact) set $\mathcal{S}_d:=\left\{ u \in \R^d, |u| =1 \right\}$ yields
$$
\eta := \inf_{u \in S_d} \langle u, h(\theta^* + \epsilon u) \rangle >0 .
$$

The local uniform convergence of $(h^{n})_{n\geq1}$ implies
$$
\exists n_{\eta} \in \N^{*},  \ \forall n\geq n_{\eta},  \ \ \theta \in \bar{B}(\theta^*,\epsilon) \ \Rightarrow \ |h^{n}(\theta)-h(\theta)| \leq \eta/2.
$$

Then, using the following decomposition
$$
\langle \theta - \theta^*, h^{n}(\theta) \rangle = \langle \theta - \theta^*, h(\theta) \rangle + \langle \theta - \theta^*, h^{n}(\theta) - h(\theta) \rangle 
$$ 

\noindent one has for $\theta = \theta^* \pm \epsilon u$, $u\in \mathcal{S}_d$,
\begin{align*}
\epsilon \langle u, h^{n}(\theta^* + \epsilon u) \rangle  & \geq \langle \epsilon u, h(\theta^* + \epsilon u) \rangle - \epsilon \eta/2 \geq \epsilon \eta - \epsilon \eta /2 = \epsilon \eta/2 \\
-\epsilon \langle u, h^{n}(\theta^* - \epsilon u) \rangle  & \geq \langle -\epsilon u, h(\theta^* - \epsilon u) \rangle - \epsilon \eta/2 \geq \epsilon \eta - \epsilon \eta /2 = \epsilon \eta/2 
\end{align*}

\noindent so that, $\langle u , h^n(\theta^* +\epsilon u )\rangle >0$ and $\langle u , h^n(\theta^* - \epsilon u) \rangle < 0$ which combined with the intermediate value theorem applied to the continuous function $x \mapsto \langle u , h^n(\theta^* +x u) \rangle$ on the interval $[-\epsilon, \epsilon]$ yields: 
$$
\langle u , h^{n}(\theta^*+  \tilde{x} u) \rangle = 0
$$ 

\noindent for some $\tilde{x}=\tilde{x}(u) \in ]-\epsilon, \epsilon [$. Now we set $u=\theta^*-\theta^{*,n}/|\theta^*-\theta^{*,n}|$ as soon as it is possible (otherwise the proof is complete). Hence, there exists $x^* \in ]-\epsilon, \epsilon[$ such that
$$
\left< \frac{\theta^*-\theta^{*,n}}{|\theta^*-\theta^{*,n}|}, h^{n}\left(\theta^* + x^* \frac{\theta^*-\theta^{*,n}}{|\theta^*-\theta^{*,n}|} \right)\right> = 0
$$

\noindent so that multiplying the previous equality by $x^{*}+|\theta^*-\theta^{*,n}|$ we get
$$
\left< \theta^{*,n} + \left(\frac{x^*}{|\theta^*-\theta^{*,n}|}+1\right) (\theta^*-\theta^{*,n}) - \theta^{*,n}, h^{n}\left(\theta^{*,n} + \left(\frac{x^*}{|\theta^*-\theta^{*,n}|}+1\right) (\theta^*-\theta^{*,n}) \right)\right> = 0.
$$

\noindent Consequently, by the very definition of $\theta^{*,n}$, we deduce that $x^* = - |\theta^*-\theta^{*,n}| $ and finally $|\theta^* - \theta^{*,n}| < \epsilon$ for $n\geq n_{\eta}$. Hence, we conclude that $\theta^{*,n}\rightarrow \theta^*$. We now derive a convergence rate. A Taylor expansion yields for all $n\geq1$
$$
h^{n}(\theta^*) = h^{n}(\theta^{*,n}) + \left( \int_0^1 Dh^{n}(\lambda \theta^{*,n} + (1-\lambda) \theta^*) d\lambda \right) (\theta^* - \theta^{*,n}).
$$

Combining the local uniform convergence of $(Dh^{n})_{n\geq1}$ to $Dh$, the convergence of $(\theta^{*,n})_{n\geq1}$ to $\theta^*$ and the non-singularity of $Dh(\theta^*)$, one clearly gets that for $n$ large enough $\int_0^1 Dh^{n}(\lambda \theta^{*,n} + (1-\lambda) \theta^*) d\lambda$ is non singular and that
$$
 \left(\int_0^1 Dh^{n}(\lambda \theta^{*,n} + (1-\lambda) \theta^*) d\lambda \right)^{-1} \rightarrow Dh^{-1}(\theta^*), \ n\rightarrow + \infty.
$$

Consequently, recalling that $h(\theta^*)=0$ and $h^{n}(\theta^{*,n})=0$, it is plain to see 
$$
n^{\alpha}(\theta^{*,n}-\theta^{*}) = -\left(\int_0^1 Dh^{n}(\lambda \theta^{*,n} + (1-\lambda) \theta^*) d\lambda \right)^{-1}  n^{\alpha}(h^n(\theta^*) -h(\theta^*)) \rightarrow -Dh^{-1}(\theta^*) \mathcal{E}(h,\alpha, \theta^*).
$$

\subsection{Proof of Lemma \ref{lem:conv:opt:tradeoff}}
We define for all $p\ge 1, \ \Delta M^{n^{\delta}}_p:= h^{n^{\delta}}(\theta^{n^{\delta}}_{p-1})- H(\theta^{n^{\delta}}_{p-1}, (U^{n^{\delta}})^{p}) = \E[ H(\theta^{n^{\delta}}_{p-1} , (U^{n^{\delta}})^{p}) | \mathcal{F}_{p-1}] - H(\theta^{n^{\delta}}_{p-1}, (U^{n^{\delta}})^{p})$. Recalling that $((U^{n^{\delta}})^p)_{p \geq 1} $ is a sequence of i.i.d. random variables we have that $(\Delta M^{n^{\delta}}_p)_{p\geq1}$ is a sequence of martingale increments w.r.t. the natural filtration $\mathcal{F}:=(\F_p:=\sigma(\theta^{n^{\delta}}_0, (U^{n^{\delta}})^1, \cdots, (U^{n^{\delta}})^p); p\geq1)$. From the dynamic \eqref{RM}, one clearly gets for $p\geq0$
 \begin{align*}
\theta^{n^{\delta}}_{p+1}-\theta^{*,n^{\delta}} & = \theta^{n^{\delta}}_{p} - \theta^{*,n^{\delta}} - \gamma_{p+1}  Dh^{n^{\delta}}(\theta^{*,n^{\delta}}) (\theta^{n^{\delta}}_p-\theta^{*,n^{\delta}})  + \gamma_{p+1} \Delta M^{n^{\delta}}_{p+1} + \gamma_{p+1} \zeta^{n^{\delta}}_p 
\end{align*}

\noindent with $\zeta^{n^{\delta}}_p := Dh^{n^{\delta}}(\theta^{*,n^{\delta}}) (\theta^{n^{\delta}}_p - \theta^{*,n^{\delta}}) - h^{n^{\delta}}(\theta^{n^{\delta}}_p)$. Moreover, since $Dh^{n^{\delta}}$ is Lipschitz-continuous (uniformly in $n$) by Taylor's formula one gets $\zeta^{n^{\delta}}_p = \mathcal{O}(|\theta^{n^{\delta}}_p-\theta^{*,n^{\delta}}|^2)$. Hence, by a simple induction, we obtain
\begin{align}
\theta^{n^{\delta}}_{n}-\theta^{*,n^{\delta}} & = \Pi_{1, n} (\theta^{n^{\delta}}_{0}-\theta^{*,n^{\delta}})  +  \sum_{k=1}^{n} \gamma_k  \Pi_{k+1, n}  \Delta M^{n^{\delta}}_{k}  +  \sum_{k=1}^{n} \gamma_k  \Pi_{k+1, n} \left( \zeta^{n^{\delta}}_{k-1} + (Dh(\theta^{*})-Dh^{n^{\delta}}(\theta^{*,n^{\delta}}))(\theta^{n^{\delta}}_{k-1}-\theta^{*,n^{\delta}})\right) \label{rec:mainterm}
\end{align}

\noindent where $\Pi_{k, n}:= \prod_{j=k}^{n} \left(I_d - \gamma_{j} Dh(\theta^{*})\right)$, with the convention that $\Pi_{n+1, n} = I_d$. We now investigate the asymptotic behavior of each term in the above decomposition. Actually in step 1 and step 2 we will prove that the first and third terms in the right-hand side of above equality converges in probability to zero at a faster rate than $n^{-\alpha}$. We will then prove in step 3 that the second term satisfies a CLT at rate $n^{\alpha}$.

\noindent \textbf{Step 1: study of the sequence $\left\{n^{\alpha} \Pi_{1, \gamma^{-1}(1/n^{2\alpha})} (\theta^{n^{\delta}}_{0}-\theta^{*,n^{\delta}}) , n\geq0 \right\}$}

First, since $-Dh(\theta^*)$ is a Hurwitz matrix, $\forall \lambda \in [0, \lambda_m)$, there exists $C>0$ such that for any $k\leq n$, $\| \Pi_{k,n} \| \leq C \prod_{j=k}^{n} (1-\lambda \gamma_j) \leq C \exp(-\lambda \sum_{j=k}^n \gamma_j)$. We refer to \cite{Duflo1996} and \cite{ben:met:pri} for more details. Hence, one has for all $\eta \in (0,\lambda_m)$
$$
n^{\alpha} \E| \Pi_{1, \gamma^{-1}(1/n^{2\alpha})} (\theta^{n^{\delta}}_0 - \theta^{*,n^{\delta}}) | \leq C(\sup_{n\geq1} \E|\theta^{n}_0| +1) n^{\alpha} \exp\left(-(\lambda_m-\eta)\sum_{k=1}^{\gamma^{-1}(1/n^{2\alpha})} \gamma_k\right).
$$

Selecting $\eta$ such that $2(\lambda_m-\eta) \gamma_0 > 2 (\underline{\lambda}-\eta) \gamma_0 > 1$ under \A{HS2} and any $\eta \in (0,\lambda_m)$ under \A{HS1}, we derive the convergence to zero of the right hand side of the last but one inequality.
 
\noindent \textbf{Step 2: study of the sequence $\left\{n^{\alpha} \sum_{k=1}^{\gamma^{-1}(1/n^{2\alpha})} \gamma_k  \Pi_{k+1, \gamma^{-1}(1/n^{2\alpha})} \left( \zeta^{n^{\delta}}_{k-1} + (Dh(\theta^{*})-Dh^{n^{\delta}}(\theta^{*,n^{\delta}}))(\theta^{n^{\delta}}_{k-1}-\theta^{*,n^{\delta}})\right), n\geq0 \right\}$}

We focus on the last term of \eqref{rec:mainterm}. Using Lemma \ref{sstrongerror:tech:lemme} we get
\begin{align*}
\E \left| \sum_{k=1}^{n} \gamma_k  \Pi_{k+1, n}   (\zeta^{n^{\delta}}_{k-1} + (Dh(\theta^{*})-Dh^{n^{\delta}}(\theta^{*,n^{\delta}}))(\theta^{n^{\delta}}_{k-1}-\theta^{*,n^{\delta}}) )  \right| & \leq C  \sum_{k=1}^{n}  \|\Pi_{k+1, n}\|  (\gamma^2_k +\gamma^{3/2}_k \|Dh(\theta^{*})-Dh^{n^{\delta}}(\theta^{*,n^{\delta}})\| ),
\end{align*}

\noindent so that by Lemma \ref{stepseq:tech:lemme} (see also remark \ref{eigenvalue:rem}), the local uniform convergence of $(Dh^{n})_{n\geq1}$ and the continuity of $Dh$ at $\theta^*$, we derive
$$
\limsup_{n} n^{\alpha} \E \left| \sum_{k=1}^{\gamma^{-1}(1/n^{2\alpha})} \gamma_k  \Pi_{k+1, \gamma^{-1}(1/n^{2\alpha})}  (\zeta^{n^{\delta}}_{k-1} + (Dh(\theta^{*})-Dh^{n^{\delta}}(\theta^{*,n^{\delta}}))(\theta^{n^{\delta}}_{k-1}-\theta^{*,n^{\delta}}) )   \right| =0.
$$

\noindent \textbf{Step 3: study of the sequence $\left\{n^{\alpha} \sum_{k=1}^{\gamma^{-1}(1/n^{2\alpha})} \gamma_k  \Pi_{k+1, \gamma^{-1}(1/n^{2\alpha})} \Delta M^{n^{\delta}}_{k}, n\geq0 \right\}$}

 We use the following decomposition
\begin{align*}
\sum_{k=1}^{n} \gamma_k  \Pi_{k+1, n}  \Delta M^{n^{\delta}}_{k} & =  \sum_{k=1}^{n} \gamma_k  \Pi_{k+1, n} ( h^{n^{\delta}}(\theta^{n^{\delta}}_k) - h^{n^{\delta}}(\theta^{*,n^{\delta}}) - (H(\theta^{n^{\delta}}_k,(U^{n^{\delta}})^{k+1}) - H(\theta^{*,n^{\delta}},(U^{n^{\delta}})^{k+1}))   ) \\
& + \sum_{k=1}^{n} \gamma_k  \Pi_{k+1, n} (h^{n^{\delta}}(\theta^{*,n^{\delta}}) - H(\theta^{*,n^{\delta}},(U^{n^{\delta}})^{k+1}) ) \\
& := R_n + M_n
\end{align*}

Now, using that $\E\left[ H(\theta^{n^{\delta}}_k,(U^{n^{\delta}})^{k+1}) | \mathcal{F}_k\right] =  h^{n^{\delta}}(\theta^{n^{\delta}}_k)$, $\E\left[ H(\theta^{*,n^{\delta}},(U^{n^{\delta}})^{k+1}) | \mathcal{F}_k\right] = h^{n^{\delta}}(\theta^{*,n^{\delta}})$ and \A{HR}, we have
\begin{align*}
\E|R_n|^2 \leq \sum_{k=1}^{n} \gamma^2_k  \|\Pi_{k+1, n}\|^2 \E[|\theta^{n^{\delta}}_k - \theta^{*,n^{\delta}}|^{2a}]  \leq   \sum_{k=1}^{n} \gamma^{2+a}_k \|\Pi_{k+1, n}\|^2
\end{align*}

\noindent where we used Lemma \ref{sstrongerror:tech:lemme} and Jensen's inequality for the last inequality. Moreover, according to Lemma \ref{stepseq:tech:lemme}, we have 
$$
\limsup_{n} n^{2 \alpha} \sum_{k=1}^{\gamma^{-1}(1/n^{2\alpha})} \gamma^{2+a}_k  \|\Pi_{k+1, \gamma^{-1}(1/n^{2\alpha})}\|^2 = 0 
$$

\noindent so that, $n^{\alpha}  \sum_{k=1}^{n} \gamma_k  \Pi_{k+1, n} ( h^{n^{\delta}}(\theta^{n^{\delta}}_k)-h^{n^{\delta}}(\theta^{*,n^{\delta}}) - (H(\theta^{n^{\delta}}_k,(U^{n^{\delta}})^{k+1})-H(\theta^{*,n^{\delta}},(U^{n^{\delta}})^{k+1}))   ) \overset{L^{2}(\P)}{\longrightarrow}0$.

To conclude we prove that the sequence $\left\{\gamma^{-1/2}(n) M_n, \ n\geq0\right\}$, satisfies a CLT. In order to do this we apply standard results on CLT for martingale arrays. More precisely, we will apply Theorem 3.2 and Corollary 3.1, p.58 in \cite{Hall:Heyde:80} so that we need to prove that the conditional Lindeberg assumption is satisfied, that is $\lim_{n}\sum_{k=1}^{n}\E[|\gamma^{-1/2}(n)\gamma_k  \Pi_{k+1, n} (h^{n^{\delta}}(\theta^{*,n^{\delta}}) - H(\theta^{*,n^{\delta}},(U^{n^{\delta}})^{k+1}) )|^{p}]=0$, for some $p>2$ and that the conditional variance $(S_n)_{n\geq1}$ defined by
\begin{align*}
S_n & := \frac{1}{\gamma(n)} \sum_{k=1}^{n} \gamma^{2}_k \Pi_{k+1, n} \E_k[(h^{n^{\delta}}(\theta^{*,n^{\delta}}) - H(\theta^{*,n^{\delta}},(U^{n^{\delta}})^{k+1})) (h^{n^{\delta}}(\theta^{*,n^{\delta}}) - H(\theta^{*,n^{\delta}},(U^{n^{\delta}})^{k+1}))^{T}] \Pi^{T}_{k+1, n}, \\
& =  \frac{1}{\gamma(n)} \sum_{k=1}^{n} \gamma^{2}_k \Pi_{k+1, n} \Gamma_n \Pi^{T}_{k+1, n}
\end{align*}

\noindent with $
\Gamma_n:= \E[H(\theta^{*,n^{\delta}},U^{n^{\delta}}) (H(\theta^{*,n^{\delta}}, U^{n^{\delta}}))^{T}] 
$, since $h^{n^{\delta}}(\theta^{*,n^{\delta}})=0$, satisfies $S_n \overset{a.s.}{\longrightarrow} \Sigma^*$ as $n\rightarrow +\infty$. We also set $\Gamma^{*} := \E[H(\theta^{*},U)) (H(\theta^{*},U))^{T}]$.

By \A{HI}, it holds for some $R>0$ such that $\forall n \geq1$, $\theta^{*,n} \in B(0,R)$
\begin{align*}
\sum_{k=1}^{n} \E  \left|\gamma^{-\frac12}(n) \gamma_k  \Pi_{k+1, n}  (h^{n^{\delta}}(\theta^{*,n^{\delta}}) - H(\theta^{*,n^{\delta}},(U^{n^{\delta}})^{k+1})) \right|^{2+\delta}
\leq C \sup_{\left\{\theta: |\theta| \leq R, n\in \N^{*}\right\}}\E[|H(\theta, U^{n})|^{2+\delta}] \gamma^{-1+\frac{\delta}{2}}(n)\sum_{k=1}^{n} \gamma^{2+\delta}_k  \|\Pi_{k+1, n}\|^{2+\delta}  
\end{align*}

By Lemma \ref{stepseq:tech:lemme}, we have $\lim\sup_n  \gamma^{-1+ \delta/2}(n)\sum_{k=1}^{n} \gamma^{2+\delta}_k  \|\Pi_{k+1, n}\|^{2+\delta} \leq \lim\sup_n \gamma^{\delta/2}(n) = 0$, so that the conditional Lindeberg condition, see \cite{Hall:Heyde:80}, Corollary 3.1, is satisfied. Now we focus on the conditional variance. By the local uniform convergence of $(\theta\mapsto \E[H(\theta,U^{n^{\delta}}) (H(\theta,U^{n^{\delta}}))^{T}])_{n\geq0}$, the continuity of $\theta \mapsto \E[H(\theta,U) (H(\theta,U))^{T}]$ at $\theta^*$ and since $\theta^{*,n^{\delta}}\rightarrow \theta^*$, we have $
\Gamma_n \rightarrow \Gamma^{*}$, so that from Lemma \ref{stepseq:tech:lemme}, it follows that
$$
\lim\sup_{n} \left\|\frac{1}{\gamma(n)} \sum_{k=1}^{n} \gamma^{2}_k \Pi_{k+1, n} (\Gamma_n - \Gamma^{*})\Pi^{T}_{k+1, n} \right\| \leq \lim\sup_{n} \|\Gamma_n - \Gamma^{*}\| =0.
$$

Hence we see that $\lim_n S_n = \lim_n \frac{1}{\gamma(n)} \sum_{k=1}^{n} \gamma^2_k  \Pi_{k+1, n} \Gamma^{*} \Pi^{T}_{k+1, n}$ if this latter limit exists. Let us note that $\Sigma^{*}$ given by \eqref{cov:matrix:opt:alloc} is the (unique) matrix $A$ solution to the Lyapunov equation:
$$
\Gamma^{*} - (Dh(\theta^*)-\zeta I_d)A - A(Dh(\theta^*)-\zeta I_d)^{T} = 0.
$$
 
 We aim at proving that $S_n \overset{a.s.}{\longrightarrow} \Sigma^*$. In order to do this, we define
 $$
 A_{n+1} := \frac{1}{\gamma(n+1)}\sum_{k=1}^{n+1} \gamma^{2}_k \Pi_{k+1, n} \Gamma^{*} \Pi^{T}_{k+1, n}
 $$
 
 \noindent which can be written in the following recursive form
\begin{align*}
A_{n+1} & = \gamma_{n+1} \Gamma^{*} + \frac{\gamma_n}{\gamma_{n+1}} (I_d-\gamma_{n+1}Dh(\theta^*))A_n (I_d-\gamma_{n+1}Dh(\theta^*))^{T} \\
& = A_{n} + \gamma_n (\Gamma^{*} - Dh(\theta^*) A_n - A_n Dh(\theta^*)^{T}) + (\gamma_{n+1}-\gamma_n) \Gamma^{*} + \gamma_n \gamma_{n+1} Dh(\theta^*)A_n Dh(\theta^*)^{T} \\
& + \frac{\gamma_n - \gamma_{n+1}}{\gamma_{n+1}} A_n
\end{align*}

Under the assumptions made on the step sequence $(\gamma_n)_{n\geq1}$, we have $\frac{\gamma_n - \gamma_{n+1}}{\gamma_{n+1}} = 2 \zeta \gamma_n + o(\gamma_n)$ and $\gamma_{n+1} - \gamma_n = \O(\gamma^{2}_n)$. Consequently, introducing $Z_n = A_{n} - \Sigma^{*}$, simple computations from the previous equality yield
\begin{align*}
Z_{n+1} & = Z_{n} - \gamma_{n} \left( (Dh(\theta^*)-\zeta I_d) Z_n + Z_n (Dh(\theta^*)-\zeta I_d)^{T} \right) + \gamma_{n} \gamma_{n+1} Dh(\theta^*) Z_n Dh(\theta^*)^{T} \\
& + \left(\frac{\gamma_n - \gamma_{n+1}}{\gamma_{n+1}}-2\zeta \gamma_n I_d \right) Z_n +  \gamma_n \gamma_{n+1} Dh(\theta^*) \Sigma^* Dh(\theta^*)^{T} + (\gamma_{n+1} - \gamma_{n}) \Gamma^{*}  +\left(\frac{\gamma_n - \gamma_{n+1}}{\gamma_{n+1}}-2\zeta \gamma_n I_d \right) \Sigma^{*}
\end{align*}

\noindent Let us note that by the very definition of $\zeta$ and assumptions \A{HS1}, \A{HS2}, the matrix $Dh(\theta^*)-\zeta I_d$ is stable, so that taking the norm in the previous equality, there exists $\lambda >0$ such that
$$
\|Z_{n+1}\| \leq (1-\lambda \gamma_n + o(\gamma_n)) \|Z_n\| + o(\gamma_n)
$$ 

\noindent for $n\geq n_0$, $n_0$ large enough. By a simple induction, it holds for $n\geq N \geq n_0$
$$
\|Z_n\| \leq C \|Z_N\| \exp(-\lambda s_{N,n}) + C \exp(-\lambda s_{N,n}) \sum_{k=N}^{n} \exp(\lambda s_{N,k}) \gamma_k \|e_k\|
$$ 

\noindent where $e_n = o(1)$ and we set $s_{N,n}:=\sum_{k=N}^{n} \gamma_k$. From the assumption \eqref{STEP}, it follows that for $N\geq n_0$
$$
\lim\sup_{n} \|Z_n\| \leq C \sup_{k\geq N} \|e_k\|
$$

\noindent and passing to the limit as $N$ goes to infinity it clearly yields $\lim\sup_{n} \| Z_n \| =0$. Hence, $S_n \overset{a.s.}{\longrightarrow}\Theta^{*}$ and the proof is complete.


%

\subsection{Proof of Lemma \ref{lem:conv:mean}}
 We freely use the notations and the intermediate results of the proof of Lemma \ref{lem:conv:opt:tradeoff}. Using \eqref{rec:mainterm} in its recursive form, for any $p\geq0$ and for $n$ large enough, it holds 
$$
\theta^{n^{\delta}}_p - \theta^{*,n^{\delta}}  = - \frac{1}{\gamma_{p+1}} (Dh^{n^{\delta}}(\theta^{*,n^{\delta}}))^{-1} (\theta^{n^{\delta}}_{p+1} - \theta^{n^{\delta}}_{p}) + (Dh^{n^{\delta}}(\theta^{*,n^{\delta}}))^{-1} \Delta M^{n^{\delta}}_{p+1} + (Dh^{n^{\delta}}(\theta^{*,n^{\delta}}))^{-1} \zeta^{n^{\delta}}_{p} .
$$

Hence, using an Abel's transform we derive
\begin{align*}
\bar{\theta}^{n^{\delta}}_{n^{2\alpha}} - \theta^{*,n^{\delta}} & = \frac{1}{n^{2\alpha}+1} \sum_{k=0}^{n^{2\alpha}} \theta^{n^{\delta}}_k - \theta^{*,n^{\delta}} = - \frac{(Dh^{n^{\delta}}(\theta^{*,n^{\delta}}))^{-1} }{n^{2\alpha}+1} \sum_{k=0}^{n^{2\alpha}} \frac{1}{\gamma_{k+1}} (\theta^{n^{\delta}}_{k+1} - \theta^{n^{\delta}}_{k})  \\
& +  \frac{(Dh^{n^{\delta}}(\theta^{*,n^{\delta}}))^{-1}}{n^{2\alpha}+1} \sum_{k=0}^{n^{2\alpha}} \Delta M^{n^{\delta}}_{k+1} +  \frac{(Dh^{n^{\delta}}(\theta^{*,n^{\delta}}))^{-1}}{n^{2\alpha}+1} \sum_{k=0}^{n^{2\alpha}}  \zeta^{n^{\delta}}_{k} \\
& = - \frac{(Dh^{n^{\delta}}(\theta^{*,n^{\delta}}))^{-1}}{n^{2\alpha}+1} \left(\frac{\theta^{n^{\delta}}_{n^{2\alpha}+1}-\theta^{*,n^{\delta}}}{\gamma_{n^{2\alpha}+1}} - \frac{\theta^{n^{\delta}}_0-\theta^{*,n^{\delta}}}{\gamma_1}\right) -  \frac{(Dh^{n^{\delta}}(\theta^{*,n^{\delta}}))^{-1}}{n^{2\alpha}+1} \sum_{k=1}^{n^{2\alpha}} \left(\frac{1}{\gamma_k} - \frac{1}{\gamma_{k+1}}  \right) (\theta^{n^{\delta}}_k - \theta^{*,n^{\delta}}) \\
& + \frac{(Dh^{n^{\delta}}(\theta^{*,n^{\delta}}))^{-1} }{n^{2\alpha}+1} \sum_{k=0}^{n^{2\alpha}} \Delta M^{n^{\delta}}_{k+1}  + \frac{(Dh^{n^{\delta}}(\theta^{*,n^{\delta}}))^{-1}}{n^{2\alpha}+1} \sum_{k=0}^{n^{2\alpha}}  \zeta^{n^{\delta}}_{k} 
\end{align*}

 We now study each term of the above decomposition. 

\noindent \textbf{Step 1: study of the sequence $\left\{\frac{n^{\alpha}}{n^{2\alpha}+1} \left(\frac{\theta^{n^{\delta}}_{n^{2\alpha}+1}-\theta^{*,n^{\delta}}}{\gamma_{n^{2\alpha}+1}} - \frac{\theta^{n^{\delta}}_0-\theta^{*,n^{\delta}}}{\gamma_1}\right), n\geq0 \right\}$}

For the first term, by Lemma \ref{sstrongerror:tech:lemme} it follows 
\begin{align*}
\E\left|\frac{n^{\alpha}}{n^{2\alpha}+1}  \left(\frac{\theta^{n^{\delta}}_{n^{2\alpha}+1}-\theta^{*,n^{\delta}}}{\gamma_{n^{2\alpha}+1}} - \frac{\theta^{n^{\delta}}_0-\theta^{*,n^{\delta}}}{\gamma_1}\right) \right| & \leq C \left( \frac{1}{ \sqrt{n^{2\alpha} \gamma_{n^{2\alpha}+1}}}  + \frac{1}{n^{\alpha}} (\sup_{n\geq1}\E|\theta^{n}_0| + 1)\right) \\ 
& \leq C \left( \frac{1}{\sqrt{n^{2\alpha} \gamma_{n^{2\alpha}+1}}} + \frac{1}{n^{\alpha}} \right)\longrightarrow 0,
\end{align*}

\noindent since by \A{HS1} one has $n\gamma_{n}\rightarrow 0$, $n\rightarrow +\infty$.

\noindent \textbf{Step 2: study of the sequence $\left\{ \frac{n^{\alpha}}{n^{2\alpha}+1}  \sum_{k=1}^{n^{2\alpha}} \left( \frac{1}{\gamma_k} - \frac{1}{\gamma_{k+1}}  \right) (\theta^{n^{\delta}}_k-\theta^{*,n^{\delta}}), n\geq0 \right\}$}

Similarly for the second term, we have
\begin{align*}
\E \left| \frac{n^{\alpha}}{n^{2\alpha}+1}  \sum_{k=1}^{n^{2\alpha}} \left( \frac{1}{\gamma_k} - \frac{1}{\gamma_{k+1}}  \right) (\theta^{n^{\delta}}_k - \theta^{*,n^{\delta}}) \right| & \leq C \frac{1}{n^{\alpha}} \sum_{k=1}^{n^{2\alpha}} \gamma^{1/2}_{k} \left( \frac{1}{\gamma_{k+1}} - \frac{1}{\gamma_{k}} \right) \gamma^{-1/2}_k \E|\theta^{n^{\delta}}_k-\theta^{*,n^{\delta}}| \\
& \leq C \frac{1}{n^{\alpha}} \sum_{k=1}^{n^{2\alpha}} \gamma^{1/2}_{k} \left( \frac{1}{\gamma_{k+1}} - \frac{1}{\gamma_{k}} \right) \rightarrow 0, \ \ n\rightarrow + \infty.
\end{align*}

\noindent where we used Lemma \ref{sstrongerror:tech:lemme} for the last inequality and assumption \A{HS1} with $a<1$.

%

\noindent \textbf{Step 3: study of the sequence $\left\{ \frac{n^{\alpha}}{n^{2\alpha}+1} \sum_{k=0}^{n^{2\alpha}} \Delta M^{n^{\delta}}_{k+1}  , n\geq0 \right\}$}

 As in the proof of Lemma \ref{lem:conv:opt:tradeoff}, we decompose this sequence as follows
\begin{align*}
 \frac{n^{\alpha}}{n^{2\alpha}+1} \sum_{k=0}^{n^{2\alpha}}  \Delta M^{n^{\delta}}_{k+1} & =   \frac{n^{\alpha}}{n^{2\alpha}+1} \sum_{k=1}^{n^{2\alpha}}  ( h^{n^{\delta}}(\theta^{n^{\delta}}_k)-h^{n^{\delta}}(\theta^{*,n^{\delta}}) - (H(\theta^{n^{\delta}}_k,(U^{n^{\delta}})^{k+1})-H(\theta^{*,n^{\delta}},(U^{n^{\delta}})^{k+1}))   ) \\
& + \frac{n^{\alpha}}{n^{2\alpha}+1} \sum_{k=1}^{n^{2\alpha}}  (h^{n^{\delta}}(\theta^{*,n^{\delta}}) - H(\theta^{*,n^{\delta}},(U^{n^{\delta}})^{k+1}) ) \\
& := R_n + M_n
\end{align*}

For the sequence $(R_n)_{n\geq1}$ we use \A{HR} to write
$$
\E|R_n|^2 \leq \frac{C}{n^{2\alpha}} \sum_{k=0}^{n^{2\alpha}}  \E|H(\theta^{n^{\delta}}_k, (U^{n^{\delta}})^{k+1}) - H(\theta^{*,n^{\delta}}, U^{n^{\delta}})|^2 =  \frac{C}{n^{2\alpha}} \sum_{k=1}^{n^{2\alpha}} \gamma^{2a}_k  \rightarrow 0,
$$

\noindent owing to Ces\`{a}ro's Lemma. We now prove a CLT for the sequence $(M_n)_{n\geq1}$ by applying Theorem 3.2 and Corollary 3.1, p.58 in \cite{Hall:Heyde:80}. Since $\theta^{*,n^{\delta}} \rightarrow \theta^{*}$ and by \A{HI} it holds for some $R>0$
\begin{align*}
 \sum_{k=0}^{n^{2\alpha}} \E\left| \frac{n^{\alpha}}{n^{2\alpha}+1} (h^{n^{\delta}}(\theta^{*,n^{\delta}}) - H(\theta^{*,n^{\delta}},(U^{n^{\delta}})^{k+1}) )  \right|^{2+\delta} & \leq \frac{C}{n^{\alpha b}} (\sup_{\theta: |\theta| \leq R, \ n \in \N^{*}} \E|H(\theta, U^{n})|^{2+\delta})  \rightarrow 0, \ \ n\rightarrow +\infty,
\end{align*}

\noindent so that the conditional Lindeberg condition is satisfisfied, see \cite{Hall:Heyde:80} Corollary 3.1. Now, we focus on the conditional variance. For convenience, we set
\begin{align*}
S_n & := \frac{n^{2\alpha}}{(n^{2\alpha}+1)^2} \sum_{k=1}^{n^{2\alpha}} \E_k[ (h^{n^{\delta}}(\theta^{*,n^{\delta}}) - H(\theta^{*,n^{\delta}},(U^{n^{\delta}})^{k+1}) )  (h^{n^{\delta}}(\theta^{*,n^{\delta}}) - H(\theta^{*,n^{\delta}}, (U^{n^{\delta}})^{k+1}) )^{T} ] \\
& =  \frac{n^{2\alpha}}{(n^{2\alpha}+1)^2} \sum_{k=1}^{n^{2\alpha}} \E[H(\theta^{*,n^{\delta}}, U^{n^{\delta}})  (H(\theta^{*,n^{\delta}}, U^{n^{\delta}}) )^{T} ] \\
& = \frac{n^{4\alpha}}{(n^{2\alpha}+1)^2} \E[H(\theta^{*,n^{\delta}}, U^{n^{\delta}})  (H(\theta^{*,n^{\delta}}, U^{n^{\delta}}) )^{T} ],
\end{align*}

\noindent so that we clearly have $S_n \rightarrow \E[H(\theta^{*},U)  (H(\theta^{*},U) )^{T} ]$ by the local uniform convergence of $(\theta \mapsto \E[H(\theta,U^{n})  (H(\theta, U^{n}) )^{T} ] )_{n\geq1}$, the continuity of $\theta \mapsto \E[H(\theta, U)  (H(\theta, U) )^{T} ]$ at $\theta^*$ and the convergence of $(\theta^{*,n^{\delta}})_{n\geq1}$ towards $\theta^*$. Therefore, since $(Dh^{n^{\delta}}(\theta^{*,n^{\delta}}))^{-1} \rightarrow (Dh(\theta^*))^{-1}$, we conclude that
$$
 (Dh^{n^{\delta}}(\theta^{*,n^{\delta}}))^{-1} \frac{n^{\alpha}}{n^{2\alpha}+1} \sum_{k=0}^{n^{2\alpha}}  \Delta M^{n^{\delta}}_{k+1} \Longrightarrow \mathcal{N}(0, Dh(\theta^*)^{-1} \E_x[H(\theta^*, U)H(\theta^*, U)^{T}] (Dh(\theta^*)^{-1})^{T}). 
$$

%
%
\noindent \textbf{Step 4: study of the sequence $\left\{ \frac{n^{\alpha}}{n^{2\alpha}+1} \sum_{k=0}^{n^{2\alpha}} \zeta^{n^{\delta}}_{k}  , n\geq0 \right\}$}

Now, observe that by Lemma \ref{sstrongerror:tech:lemme} the last term is bounded in $L^{1}$-norm by
\begin{align*}
 \frac{n^{\alpha}}{n^{2\alpha}+1} \sum_{k=0}^{n^{2\alpha}}  \E|\zeta^{n^{\delta}}_{k}|  & \leq \frac{C}{n^{\alpha}} \sum_{k=0}^{n^{2\alpha}} \gamma_k  \rightarrow 0, \ n\rightarrow+\infty 
\end{align*}

\noindent since $\gamma$ satisfies \A{HS1} with $a <1$.
\subsection{Proof of Lemma \ref{conv:lem:2step:sr}}
 We will just prove the first assertion of the Lemma. The second one will readily follow. When the exact value of a constant is not important we may repeat the same symbol for constants that may change from one line to next. We come back to the decomposition used in the proof of Lemma \ref{lem:conv:opt:tradeoff}. We consequently use the same notations. Let us note that the procedure $(\theta_p)_{p\geq0}$ $a.s.$ converges to $\theta^*$ and satisfies a CLT according to Theorem \ref{CLT_SA}.

From the dynamics of $(\theta^{n^{\delta}}_p)_{p\geq0}$ and $(\theta_p)_{p\geq0}$ we write for $p\geq0$
\begin{align*}
\theta^{n^{\beta}}_{p+1}-\theta^{*,n^{\beta}} & = \theta^{n^{\beta}}_{p} - \theta^{*,n^{\beta}} - \gamma_{p+1}  Dh^{n^{\beta}}(\theta^{*,n^{\beta}}) (\theta^{n^{\beta}}_p-\theta^{*,n^{\beta}})  + \gamma_{p+1} \Delta M^{n}_{p+1} + \gamma_{p+1} \zeta^{n^{\beta}}_p \\
\theta_{p+1}-\theta^{*} & = \theta_{p} - \theta^{*} - \gamma_{p+1}  Dh(\theta^{*}) (\theta_p-\theta^{*})  + \gamma_{p+1} \Delta M_{p+1} + \gamma_{p+1} \zeta_p,
\end{align*}

\noindent with $\Delta M_{p+1}=h(\theta_p)-H(\theta_p,(U)^{p+1})$, $p\geq0$, and $\zeta^{n^{\beta}}_p:=  Dh^{n^{\beta}}(\theta^{*,n^{\beta}}) (\theta^{n^{\beta}}_p-\theta^{*,n^{\beta}}) - h^{n^{\beta}}(\theta^{n^{\beta}}_p)$, $ \zeta_p = Dh(\theta^{*}) (\theta_p-\theta^{*}) - h(\theta_p)$. Since $Dh^{n}$ and $Dh$ are Lipschitz-continuous, by Taylor's formula one gets $\zeta^{n^{\beta}}_p = \O(|\theta^{n^{\beta}}_p-\theta^{*,n^{\beta}}|^2)$ and $\zeta_p = \O(|\theta_p-\theta^{*}|^2)$. Therefore, defining $z^{n^{\beta}}_{p} = \theta^{n^{\beta}}_{p}-\theta_{p}  - (\theta^{*,n^{\beta}} - \theta^{*} )$, $p\geq0$, with $z^{n^{\beta}}_{0} = \theta^{*}- \theta^{*,n^{\beta}}$, by a simple induction argument one has
\begin{align}
z^{n^{\beta}}_{n} & = \Pi_{1, n} z^{n^{\beta}}_{0}  +  \sum_{k=1}^{n} \gamma_k  \Pi_{k+1, n} \Delta N^{n^{\beta}}_{k}  +   \sum_{k=1}^{n} \gamma_k  \Pi_{k+1, n}  \Delta R^{n^{\beta}}_{k} \nonumber \\
& +  \sum_{k=1}^{n} \gamma_k  \Pi_{k+1, n} \left( \zeta^{n}_{k-1} - \zeta_{k-1} + (Dh(\theta^{*})-Dh^{n^{\beta}}(\theta^{*,n^{\beta}}))(\theta^{n^{\beta}}_{k-1}-\theta^{*,n^{\beta}})\right) \label{rec:two:level:sr}
\end{align}

\noindent where $\Pi_{k, n}:= \prod_{j=k}^{n} \left(I_d - \gamma_{j} Dh(\theta^{*})\right)$, with the convention that $\Pi_{n+1, n} = I_d$, and $\Delta N^{n^{\beta}}_{k} := h^{n^{\beta}}(\theta^{*}) - h(\theta^{*}) - (H(\theta^{*},(U^{n^{\beta}})^{k+1}) - H(\theta^{*}, U^{k+1}))$, $\Delta R^{n^{\beta}}_{k} = h^{n^{\beta}}(\theta^{n^{\beta}}_k)-h^{n^{\beta}}(\theta^{*}) - (H(\theta^{n^{\beta}}_k,(U^{n^{\beta}})^{k+1})-H(\theta^{*},(U^{n^{\beta}})^{k+1}))  + H(\theta_k, U^{k+1}) - H(\theta^*, U^{k+1}) - (h(\theta_k) - h(\theta^*) ) $ for $k\geq1$. We will now investigate the asymptotic behavior of each term in the above decomposition. We will see that the second term which represents the non-linearity in the innovation variables $(U^{n^{\beta}},U)$ provides the announced weak rate of convergence.

\noindent \textbf{Step 1: study of the sequence $\left\{n^{\alpha} \Pi_{1, \gamma^{-1}(1/(n^{2\alpha-2\rho\beta}))} z^{n^{\beta}}_{0}, n\geq0 \right\}$}

Under the assumptions on the step sequence $\gamma$, one has for all $\eta \in (0, \lambda_m)$
\begin{align*}
n^{\alpha} \E\left[|\Pi_{1, \gamma^{-1}(1/(n^{2\alpha-2\rho \beta}))} z^{n^{\beta}}_{0} |\right] & \leq n^{\alpha} \|\Pi_{1, \gamma^{-1}(1/(n^{2\alpha - 2\rho \beta}))}\| (\E|\theta_0|+\sup_{n\geq1}\E|\theta^{n}_0|+|\theta^{*,n^{\beta}}-\theta^{*}|)\\
& \leq C n^{\alpha}  \exp\left(-(\lambda_m-\eta)\sum_{k=1}^{\gamma^{-1}(1/(n^{2\alpha-2\rho \beta}))} \gamma_k\right)  \rightarrow 0,
\end{align*}

\noindent by selecting $\eta$ s.t. $(\lambda_m-\eta) \gamma_0 > (\underline{\lambda}-\eta) \gamma_0 > \alpha/(2\alpha-2\rho\beta)$ if $\gamma(p)=\gamma_0/p$, $p\geq1$.

\noindent \textbf{Step 2: study of the sequence \\
$\left\{n^{\alpha} \sum_{k=1}^{\gamma^{-1}(1/(n^{2\alpha-2\rho \beta}))} \gamma_k  \Pi_{k+1, \gamma^{-1}(1/(n^{2\alpha-2\rho \beta}))} \left( \zeta^{n}_{k-1} - \zeta_{k-1} + (Dh(\theta^{*})-Dh^{n^{\beta}}(\theta^{*,n^{\beta}}))(\theta^{n^{\beta}}_{k-1}-\theta^{*,n^{\beta}})\right), n\geq0 \right\}$}

By Lemma \ref{sstrongerror:tech:lemme}, one has
\begin{align*}
\E \left| \sum_{k=1}^{n} \gamma_k  \Pi_{k+1, n}   (\zeta^{n^{\beta}}_{k-1} -\zeta_{k-1} + (Dh(\theta^{*})-Dh^{n^{\beta}}(\theta^{*,n^{\beta}}))(\theta^{n^{\beta}}_{k-1}-\theta^{*,n^{\beta}}) )  \right| & \leq C  \sum_{k=1}^{n}  \|\Pi_{k+1, n}\|  (\gamma^2_k +\gamma^{3/2}_k \|Dh(\theta^{*})-Dh^{n^{\beta}}(\theta^{*,n^{\beta}})\| ),
\end{align*}

\noindent so that by Lemma \ref{stepseq:tech:lemme}, we easily derive that (if $\gamma(p)=\gamma_0/p$ recall that $\underline{\lambda} \gamma_0> \alpha/(2\alpha-2\rho\beta)$) $\sum_{k=1}^{n}  \gamma^{2}_k  \|\Pi_{k+1, n}\| = o(\gamma^{\alpha/(2\alpha-2\rho \beta)}(n))$ and (recall that  $\underline{\lambda} \gamma_0> \alpha/(2\alpha-2\rho\beta)>1/2$) $\sum_{k=1}^{n}  \gamma^{3/2}_k  \|\Pi_{k+1, n}\| = \O(\gamma^{1/2}(n))$ so that
$$
\limsup_n n^{\alpha} \sum_{k=1}^{\gamma^{-1}(1/(n^{2\alpha - 2\rho \beta}))}  \gamma^{2}_k  \|\Pi_{k+1, \gamma^{-1}(1/(n^{2\alpha-2\rho\beta}))}\| = 0.
$$

Moreover, since $Dh^{n^{\beta}}$ is Lipschitz-continuous (uniformly in $n$) we clearly have
\begin{align*}
 \sum_{k=1}^{n} \gamma^{3/2}_k \|\Pi_{k+1, n}\|  \|Dh(\theta^{*})-Dh^{n^{\beta}}(\theta^{*,n^{\beta}})\| \leq \sum_{k=1}^{n} \gamma^{3/2}_k \|\Pi_{k+1, n}\|  (\|Dh(\theta^{*})-Dh^{n^{\beta}}(\theta^{*})\| +  |\theta^{*,n^{\beta}} -\theta^{*}|)
\end{align*}

\noindent which combined with $n^{\rho \beta} \|Dh(\theta^{*})-Dh^{n^{\beta}}(\theta^{*})\| \rightarrow 0$ and $n^{\rho \beta} |\theta^{*,n^{\beta}} -\theta^{*}| \rightarrow 0$ (recall that $\alpha>\rho$) imply that $\limsup_{n} n^{\alpha} \sum_{k=1}^{\gamma^{-1}(1/(n^{2\alpha-2\rho\beta}))} \gamma^{3/2}_k \|\Pi_{k+1, \gamma^{-1}(1/(n^{2\alpha-2\rho\beta}))}\|  \|Dh(\theta^{*})-Dh^{n^{\beta}}(\theta^{*,n^{\beta}})\| =0$. Hence, we conclude that
$$
n^{\alpha} \sum_{k=1}^{\gamma^{-1}(1/(n^{2\alpha-2\rho\beta}))} \gamma_k  \Pi_{k+1, \gamma^{-1}(1/(n^{2\alpha-2\rho\beta}))} \left( \zeta^{n^{\beta}}_{k-1} - \zeta_{k-1} + (Dh(\theta^{*})-Dh^{n^{\beta}}(\theta^{*,n^{\beta}}))(\theta^{n^{\beta}}_{k-1}-\theta^{*,n^{\beta}})\right) \overset{L^{1}(\P)}{\longrightarrow} 0. 
$$

\noindent \textbf{Step 3: study of the sequence $\left\{n^{\alpha} \sum_{k=1}^{\gamma^{-1}(1/(n^{2\alpha-2\rho\beta}))} \gamma_k  \Pi_{k+1, \gamma^{-1}(1/(n^{2\alpha-2\rho\beta}))} \Delta R^{n^{\beta}}_k, n\geq0 \right\}$}

Regarding the third term of \eqref{rec:two:level:sr}, namely $\sum_{k=1}^{n} \gamma_k  \Pi_{k+1, n}  \Delta R^{n^{\beta}}_{k}$, we decompose it as follows
\begin{align*}
\sum_{k=1}^{n} \gamma_k  \Pi_{k+1, n}  \Delta R^{n^{\beta}}_{k} & =  \sum_{k=1}^{n} \gamma_k  \Pi_{k+1, n} ( h^{n^{\beta}}(\theta^{n^{\beta}}_k)-h^{n^{\beta}}(\theta^{*}) - (H(\theta^{n^{\beta}}_k, (U^{n^{\beta}})^{k+1})-H(\theta^{*},(U^{n^{\beta}})^{k+1}))   ) \\
& + \sum_{k=1}^{n} \gamma_k  \Pi_{k+1, n} ( H(\theta_k, U^{k+1}) - H(\theta^*, U^{k+1}) - (h(\theta_k) - h(\theta^*) )  ) \\
& = A_n + B_n
\end{align*}

Now, using that $\E\left[ \left. H(\theta^{n^{\beta}}_k,(U^{n^{\beta}})^{k+1}) - H(\theta^{*},(U^{n^{\beta}})^{k+1}) \right| \mathcal{F}_{k} \right] = h^{n^{\beta}}(\theta^{n^{\beta}}_k)-h^{n^{\beta}}(\theta^{*})$ and \A{HLH} it follows that
\begin{align*}
\E |A_n|^2 & \leq C \sum_{k=1}^{n} \gamma^2_k  \|\Pi_{k+1, n}\|^2  ( \E|\theta^{n^{\beta}}_k - \theta^{*,n^{\beta}}|^2 + |\theta^{*,n^{\beta}} - \theta^*|^2 ) \\
 & \leq C ( \sum_{k=1}^{n} \gamma^3_k  \|\Pi_{k+1, n}\|^2  +  \sum_{k=1}^{n} \gamma^2_k  \|\Pi_{k+1, n}\|^2  |\theta^{*,n^{\beta}} - \theta^*|^2 ) \\
 & := A^1_n + A^2_n
\end{align*}

From Lemma \ref{stepseq:tech:lemme} we get $\sum_{k=1}^{n} \gamma^3_k  \|\Pi_{k+1, n}\|^2 = o(\gamma^{2\alpha/(2\alpha-2\rho\beta)}_n)$ and $\sum_{k=1}^{n} \gamma^2_k  \|\Pi_{k+1, n}\|^2  = \O(\gamma_n)$. Consequently, we derive $\lim\sup_n n^{2\alpha} A^1_{\gamma^{-1}(1/(n^{2\alpha - 2\rho \beta}))} = 0$ and $\lim\sup_n n^{2\alpha} A^2_{\gamma^{-1}(1/(n^{2\alpha-2\rho \beta}))} = 0$. Similarly using \A{HLH} and Lemma \ref{sstrongerror:tech:lemme} we derive $n^{\alpha} B_{\gamma^{-1}(1/(n^{2\alpha - 2\rho \beta}))} \overset{L^{2}(\P)}{\longrightarrow} 0$ as $ n\rightarrow +\infty$ so that
$$
n^{\alpha} \sum_{k=1}^{\gamma^{-1}(1/(n^{2\alpha - 2\rho\beta}))} \gamma_k  \Pi_{k+1, \gamma^{-1}(1/(n^{2\alpha - 2\rho\beta}))}  \Delta R^{n^{\beta}}_{k} \overset{\P}{\longrightarrow} 0, \ \ n\rightarrow +\infty.
$$

\noindent \textbf{Step 4: study of the sequence $\left\{n^{\alpha} \sum_{k=1}^{\gamma^{-1}(1/(n^{2\alpha-2\rho\beta}))} \gamma_k  \Pi_{k+1, \gamma^{-1}(1/(n^{2\alpha-2\rho\beta}))} \Delta N^{n^{\beta}}_k, n\geq0 \right\}$}

We now prove a CLT for the sequence $\left\{ n^{\alpha} \sum_{k=1}^{\gamma^{-1}(1/(n^{2\alpha-2\rho\beta}))} \gamma_k  \Pi_{k+1, \gamma^{-1}(1/(n^{2\alpha-2\rho\beta}))} \Delta N^{n^{\beta}}_{k}, \ n\geq0  \right\}$. It holds
\begin{align*}
 \sum_{k=1}^{\gamma^{-1}(1/(n^{2\alpha-2\rho\beta}))}  \E  \left| n^{\alpha} \gamma_k \Pi_{k+1, \gamma^{-1}(1/(n^{2\alpha-2\rho\beta})) } \Delta N^{n^{\beta}}_{k} \right|^{2+\delta} & \leq \sup_{n\geq1}\sup_{k\in \leftB 1, n \rightB} \E \left| n^{\rho \beta}  \Delta N^{n^{\beta}}_{k} \right|^{2+\delta}  \\
 & \times n^{(2+\delta)(\alpha - \rho \beta)}\sum_{k=1}^{\gamma^{-1}(1/(n^{2\alpha-2\rho\beta}))} \gamma^{2+\delta}_k  \|\Pi_{k+1, \gamma^{-1}(1/(n^{2\alpha-2\rho\beta}))}\|^{2+\delta} .
\end{align*}

By Lemma \ref{stepseq:tech:lemme}, we have the following bound:  $ \sum_{k=1}^{n} \gamma^{2+\delta}_k  \|\Pi_{k+1, n}\|^{2+\delta} = o(\gamma^{(2+\delta)(\alpha-\rho\beta)/(2\alpha-2\rho\beta)}(n))$ which implies
$$
\limsup_n n^{(2+\delta)(\alpha - \rho \beta)}\sum_{k=1}^{\gamma^{-1}(1/(n^{2\alpha-2\rho\beta}))} \gamma^{2+\delta}_k  \|\Pi_{k+1, \gamma^{-1}(1/(n^{2\alpha-2\rho\beta}))}\|^{2+\delta} =0.
$$
 
 Moreover simple computations lead
$$
\E\left|n^{\rho \beta} \Delta N^{n^{\beta}}_{k} \right|^{2+\delta} \leq C (|n^{\rho \beta} (h^{n^{\beta}}(\theta^*) - h(\theta^*))|^{2+\delta} + \E (n^{\rho \beta}|H(\theta^*,U^{n^{\beta}}) - H(\theta^*,U)|)^{2+\delta} ).
$$

For the first term in the above inequality we have $\sup_{n\geq1} |n^{\rho \beta} (h^{n^{\beta}}(\theta^*) - h(\theta^*))|^{2+Ê\delta} < + \infty \Leftrightarrow \alpha \geq \rho$. For the second term, using assumptions \A{HLH} and \A{HSR} we get $\sup_{n\geq1} \E\left[ (n^{\rho \beta}|H(\theta^*,U^{n^{\beta}}) - H(\theta^*,U)|)^{2+\delta}\right] < +\infty$. Hence we conclude that
$$
\sup_{n\geq1}\sup_{k\in \leftB 1, n \rightB} \E \left| n^{\rho \beta}  \Delta N^{n^{\beta}}_{k} \right|^{2+\delta} < +\infty,
$$

\noindent so that the conditional Lindeberg condition holds. Now, we focus on the conditional variance. We set 
\begin{equation}\label{cond:var:eq}
S_{n} :=  n^{2\alpha} \sum_{k=1}^{\gamma^{-1}(1/(n^{2\alpha-2\rho\beta}))} \gamma^{2}_k  \Pi_{k+1, \gamma^{-1}(1/(n^{2\alpha-2\rho\beta}))} \E_{k}[ \Delta N^{n^{\beta}}_{k} (\Delta N^{n^{\beta}}_{k})^{T}] \Pi^{T}_{k+1, \gamma^{-1}(1/(n^{2\alpha-2\rho\beta}))}, \ \ \mbox{and} \ \ V^{n^{\beta}}= U^{n^{\beta}} - U.
\end{equation}

%

A Taylor's expansion yields
\begin{align*}
n^{\rho \beta} \left(H(\theta^{*},U^{n^{\beta}}) - H(\theta^{*}, U)\right)  & =  D_xH(\theta^{*}, U) n^{\rho \beta}  V^{n^{\beta}} + \psi(\theta^{*}, U, V^{n^{\beta}}) n^{\rho\beta}  V^{n^{\beta}}
\end{align*}

\noindent with $\psi(\theta^{*}, U, V^{n^{\beta}}) \overset{\P}{\longrightarrow} 0$. From the tightness of $(n^{\rho\beta} V^{n^{\beta}})_{n\geq1}$, we get $\psi(\theta^{*}, U, V^{n^{\beta}}) n^{\rho\beta} V^{n^{\beta}} \overset{\P}{\longrightarrow} 0$ so that using Theorem \ref{weak:conv:euler} and Lemma \ref{lemme:conv:stab} yield 
$$
n^{\rho\beta}  \left(H(\theta^{*}, U^{n^{\beta}}) - H(\theta^{*}, U)\right)  \Longrightarrow  D_xH(\theta^*, U) V .
$$

Moreover, from assumptions \A{HLH} and \A{HSR} it follows that
$$
 \sup_{n\geq1} \E\left[|n^{\rho\beta}  (H(\theta^{*}, U^{n^{\beta}}) - H(\theta^{*}, U)) |^{2+\delta}\right] < + \infty,
$$

\noindent which combined with \A{HDH} imply
\begin{align*}
\E \left[  n^{\rho \beta} \left(H(\theta^{*},U^{n^{\beta}}) - H(\theta^{*}, U)\right) \right] & \rightarrow \tilde{\E}[D_xH(\theta^{*}, U) V] \\
 \E \left[n^{\rho \beta} \left(H(\theta^{*},U^{n^{\beta}}) - H(\theta^{*}, U)\right) \left(n^{\rho \beta} \left(H(\theta^{*},U^{n^{\beta}}) - H(\theta^{*},U)\right)\right)^{T} \right]& \rightarrow \tilde{\E}\left[ \left(D_xH(\theta^{*}, U) V\right) \left(D_xH(\theta^{*}, U) V\right)^{T}\right].
\end{align*}

Hence, we have
$$
\Gamma_n \rightarrow \Gamma^{*}:= \tilde{\E}\left[ \left(D_xH(\theta^{*}, U) V -\tilde{\E}[D_xH(\theta^{*}, U) V] \right) \left(D_xH(\theta^{*}, U) V -\tilde{\E}[D_xH(\theta^{*}, U) V]\right)^{T}\right]
$$

\noindent where for $n\geq1$ we set
$$
\Gamma_{n}:= n^{2\rho\beta}\E_{k}[ \Delta N^{n^{\beta}}_{k} (\Delta N^{n^{\beta}}_{k})^{T}].
$$

 Consequently, using the following decomposition
$$
\frac{1}{\gamma(n)} \sum_{k=1}^{n} \gamma^{2}_k  \Pi_{k+1, n} \Gamma_n \Pi^{T}_{k+1, n} = \frac{1}{\gamma(n)} \sum_{k=1}^{n} \gamma^{2}_k  \Pi_{k+1, n} \Gamma^{*} \Pi^{T}_{k+1, n} + \frac{1}{\gamma(n)} \sum_{k=1}^{n} \gamma^{2}_k  \Pi_{k+1, n} (\Gamma_n-\Gamma^{*}) \Pi^{T}_{k+1, n}
$$

\noindent with 
$$
\lim\sup_{n} \frac{1}{\gamma(n)} \left\| \sum_{k=1}^{n} \gamma^{2}_k  \Pi_{k+1, n} (\Gamma_n-\Gamma^{*}) \Pi^{T}_{k+1, n} \right\| \leq C \lim\sup_{n} \|\Gamma_n-\Gamma^{*}\| = 0,
$$

\noindent which is a consequence of Lemma \ref{stepseq:tech:lemme}, we clearly see that $\lim_{n} S_n= \lim_{n} \frac{1}{\gamma(n)} \sum_{k=1}^{n} \gamma^{2}_k  \Pi_{k+1, n} \Gamma^{*} \Pi^{T}_{k+1, n}$ if this latter limit exists. Let us note that $\Theta^{*}$ is the (unique) matrix $A$ solution to the Lyapunov equation: 
$$
\Gamma^{*} - (Dh(\theta^*)-\zeta I_d) A - A (Dh(\theta^*)-\zeta I_d)^{T} = 0.
$$

Following the lines of the proof of Lemma \ref{lem:conv:opt:tradeoff}, step 3, we have $S_n \overset{a.s.}{\longrightarrow} \Theta^{*}$. We leave the computational details to the reader. 
\subsection{Proof of Lemma \ref{lem:conv:aver} }
We will just prove the first assertion. The second one will readily follow. We use $C$ to denote a constant that may change from one line to the next. Using the notations of Lemma \ref{conv:lem:2step:sr}, the sequence $(\bar{z}^{n^{\beta}}_{p})_{p\in \leftB 0,n^{2\alpha-2\rho\beta} \rightB}$ can be decomposed as follows:
\begin{align*}
\bar{z}^{n^{\beta}}_{n^{2\alpha-2\rho\beta}} & = \frac{1}{n^{2\alpha-2\rho\beta}+1} \sum_{k=0}^{n^{2\alpha-2\rho\beta}} z^{n^{\beta}}_k \\
& = - (Dh(\theta^*))^{-1} \frac{1}{n^{2\alpha-2\rho\beta}+1} \left(\frac{z^{n^{\beta}}_{n^{2\alpha-2\rho\beta}+1}}{\gamma_{n^{2\alpha-2\rho\beta}+1}} - \frac{z^{n^{\beta}}_0}{\gamma_1}\right) -  (Dh(\theta^*))^{-1} \frac{1}{n^{2\alpha-2\rho\beta}+1} \sum_{k=1}^{n^{2\alpha-2\rho\beta}} \left(\frac{1}{\gamma_k} - \frac{1}{\gamma_{k+1}}  \right) z^{n^{\beta}}_k  \\
& + (Dh(\theta^*))^{-1} \frac{1}{n^{2\alpha-2\rho\beta}+1} \sum_{k=0}^{n^{2\alpha-2\rho\beta}}  (\Delta N^{n^{\beta}}_{k+1} + \Delta R^{n^{\beta}}_{k+1}) \\
& + (Dh(\theta^*))^{-1} \frac{1}{n^{2\alpha-2\rho\beta}+1} \sum_{k=0}^{n^{2\alpha-2\rho\beta}}  (\zeta^{n^{\beta}}_{k} - \zeta_{k} + (Dh(\theta^{*})-Dh^{n^{\beta}}(\theta^{*,n^{\beta}}))(\theta^{n^{\beta}}_{k}-\theta^{*,n^{\beta}})).
\end{align*}

Our aim is to study the contribution of each term in this decomposition. 

\noindent \textbf{Step 1: study of the sequence $\left\{ \frac{n^{\alpha}}{n^{2\alpha-2\rho\beta}+1} \left( \frac{z^{n^{\beta}}_{n^{2\alpha-2\rho\beta}+1}}{\gamma_{n^{2\alpha-2\rho\beta}+1}} -  \frac{z^{n^{\beta}}_0}{\gamma_1} \right), n\geq0 \right\}$}:

Using Proposition \ref{dec:aver:step} clearly yields
$$
\frac{n^{\alpha}}{n^{2\alpha-2\rho\beta}+1} \E\left|\frac{z^{n^{\beta}}_{n^{2\alpha-2\rho\beta}+1}}{\gamma_{n^{2\alpha-2\rho\beta}+1}} -  \frac{z^{n^{\beta}}_0}{\gamma_1} \right| \leq \frac{C}{(n^{\alpha-2\rho\beta} \gamma_{n^{2\alpha-2\rho\beta}+1})}(\E|\tilde{\mu}^{n^{\beta}}_{n^{2\alpha-2\rho\beta}}|  +  \E|\tilde{r}^{n^{\beta}}_{n^{2\alpha-2\rho\beta}}|) + \frac{C}{n^{\alpha-2\rho\beta}} (1+ |\theta^*-\theta^{*,n^{\beta}}|).
$$

We evaluate each term appearing in the right hand side of the last but one inequality. First we clearly have
$$
\frac{1}{(n^{\alpha-2\rho\beta}) \gamma_{n^{2\alpha-2\rho\beta}+1}} \E|\tilde{\mu}^{n^{\beta}}_{n^{2\alpha-2\rho\beta}}| \leq \frac{C}{\sqrt{(n^{2\alpha-2\rho\beta}) \gamma_{n^{2\alpha-2\rho\beta}+1}}} \rightarrow 0, \ \ \mbox{as} \ n\rightarrow +\infty,
$$

\noindent and 
$$
\frac{1}{(n^{\alpha-2\rho\beta}) \gamma_{n^{2\alpha-2\rho\beta}+1}} \E|\tilde{r}^{n^{\beta}}_{n^{2\alpha-2\rho\beta}}|  \leq \frac{C}{n^{\alpha-2\rho\beta}} \rightarrow 0,  \ \mbox{as} \ n\rightarrow +\infty.
$$

From these computations we get
$$
\frac{n^{\alpha}}{n^{2\alpha-2\rho\beta}+1} \left( \frac{z^{n^{\beta}}_{n^{2\alpha-2\rho\beta}+1}}{\gamma_{n^{2\alpha-2\rho\beta}+1}} -  \frac{z^{n^{\beta}}_0}{\gamma_1} \right) \overset{\L^{1}(\P)}{\longrightarrow} 0, \ \ \mbox{as } \ n\rightarrow + \infty. 
$$

\noindent \textbf{Step 2: study of the sequence $\left\{ \frac{n^{\alpha}}{n^{2\alpha-2\rho\beta}+1} \sum_{k=1}^{n^{2\alpha-2\rho\beta}} \left(\frac{1}{\gamma_k} - \frac{1}{\gamma_{k+1}}  \right) z^{n^{\beta}}_k , n\geq0 \right\}$}:

We use the decomposition of Proposition \ref{dec:aver:step} to derive
$$
 \frac{n^{\alpha}}{n^{2\alpha-2\rho\beta}+1} \left|\sum_{k=1}^{n^{2\alpha-2\rho\beta}} \left(\frac{1}{\gamma_k} - \frac{1}{\gamma_{k+1}}  \right) z^{n^{\beta}}_k \right| \leq \frac{n^{\alpha}}{n^{2\alpha-2\rho\beta}+1} \sum_{k=1}^{n^{2\alpha-2\rho\beta}} \left(\frac{1}{\gamma_{k+1}} - \frac{1}{\gamma_{k}}  \right) (|\tilde{\mu}^{n^{\beta}}_k| + |\tilde{r}^{n^{\beta}}_{k}| ).
$$

Then taking the expectation in the previous inequality and using that $\rho<1$ we deduce
$$
 \frac{n^{\alpha}}{n^{2\alpha-2\rho\beta}+1}  \sum_{k=1}^{n^{2\alpha - 2\rho\beta}} \left(\frac{1}{\gamma_{k+1}} - \frac{1}{\gamma_{k}}\right) \E|\tilde{\mu}^{n^{\beta}}_k| \leq \frac{C}{n^{\alpha-\rho\beta}} \sum_{k=1}^{n^{2\alpha-2\rho\beta}} \left(\frac{1}{\gamma_{k+1}} - \frac{1}{\gamma_{k}}\right) \gamma^{\frac12}_{k} \rightarrow 0.
$$

For the second term, we have
$$
 \frac{n^{\alpha}}{n^{2\alpha-2\rho\beta}+1} \sum_{k=1}^{n^{2\alpha-2\rho\beta}} \left(\frac{1}{\gamma_{k+1}} - \frac{1}{\gamma_{k}}\right) \E|\tilde{r}^{n^{\beta}}_{k}| \leq  \frac{ C }{n^{\alpha-2\rho\beta}} \sum_{k=1}^{n^{2\alpha-2\rho\beta}} \left(\frac{1}{\gamma_{k+1}} - \frac{1}{\gamma_{k}}\right) \gamma_k \rightarrow 0 ,
$$


\noindent since $\alpha>2\rho\beta$ which in turn implies
$$
 \frac{n^{\alpha}}{n^{2\alpha-\beta}+1} \sum_{k=1}^{n^{2\alpha-2\rho\beta}} \left(\frac{1}{\gamma_k} - \frac{1}{\gamma_{k+1}}  \right) z^{n^{\beta}}_k \overset{L^{1}(\P)}{\longrightarrow} 0.
$$

\noindent \textbf{Step 3: study of the sequence $\left\{  \frac{n^{\alpha}}{n^{2\alpha-2\rho\beta}+1} \sum_{k=1}^{n^{2\alpha-2\rho\beta}}  (\zeta^{n^{\beta}}_{k} - \zeta_{k} + (Dh(\theta^{*})-Dh^{n^{\beta}}(\theta^{*,n^{\beta}}))(\theta^{n^{\beta}}_{k}-\theta^{*,n^{\beta}})), n\geq0 \right\}$}:

Now we focus on the last term. We firstly note that thanks to Lemma \ref{sstrongerror:tech:lemme} we clearly have
$$
 \frac{n^{\alpha}}{n^{2\alpha-2\rho\beta}+1}  \E\left| \sum_{k=0}^{n^{2\alpha-2\rho\beta}}  (\zeta^{n^{\beta}}_{k} - \zeta_k) \right| \leq \frac{C}{n^{\alpha-2\rho\beta}}\sum_{k=0}^{n^{2\alpha-2\rho\beta}} \gamma_k \rightarrow 0 
$$ 

\noindent since $a > \alpha/(2\alpha-2\rho\beta)$. Now since $Dh^{n^{\beta}}$ is Lipschitz-continuous uniformly in $n$ we easily get
$$
 \frac{n^{\alpha}}{n^{2\alpha-2\rho\beta}+1} \E\left| \sum_{k=0}^{n^{2\alpha-2\rho\beta}} (Dh(\theta^{*})-Dh^{n^{\beta}}(\theta^{*,n^{\beta}}))(\theta^{n^{\beta}}_{k}-\theta^{*,n^{\beta}}) \right| \leq  \frac{C}{n^{\alpha-2\rho\beta}}(\|Dh(\theta^*)-Dh^{n^{\beta}}(\theta^*)\| + |\theta^*-\theta^{*,n^{\beta}}|) \sum_{k=0}^{n^{2\alpha-2\rho\beta}} \gamma^{\frac12}_k,
$$

\noindent and recalling that $n^{\alpha - (\alpha-\rho\beta)a} \|Dh(\theta^*)-Dh^{n^{\beta}}(\theta^*)\| \rightarrow 0$ and $a > \alpha (1 - \beta)/(\alpha-\rho\beta)$ which implies $n^{\alpha - (\alpha-\rho\beta)a} |\theta^*-\theta^{*,n^{\beta}}| \rightarrow 0$ we deduce
$$
 \frac{n^{\alpha}}{n^{2\alpha-2\rho\beta}+1} \sum_{k=0}^{n^{2\alpha-2\rho\beta}} (Dh(\theta^{*})-Dh^{n^{\beta}}(\theta^{*,n^{\beta}}))(\theta^{n^{\beta}}_{k}-\theta^{*,n^{\beta}})  \overset{L^{1}(\P)}{\longrightarrow} 0.
$$

%

 \noindent \textbf{Step 4: study of the sequence $\left\{  \frac{n^{\alpha}}{n^{2\alpha-2\rho\beta}+1} \sum_{k=0}^{n^{2\alpha-2\rho\beta}}  (\Delta N^{n^{\beta}}_{k+1} + \Delta R^{n^{\beta}}_{k+1}), n\geq0 \right\}$}:
 
Similarly to the proof of Lemma \ref{conv:lem:2step}, we decompose the sequence $\left\{ \frac{n^{\alpha}}{n^{2\alpha-2\rho\beta}+1} \sum_{k=1}^{n^{2\alpha-2\rho\beta}} \Delta R^{n^{\beta}}_{k}, n\geq1 \right\}$ as follows
\begin{align*}
 \frac{n^{\alpha}}{n^{2\alpha-2\rho\beta}+1} \sum_{k=0}^{n^{2\alpha-2\rho\beta}} \Delta R^{n^{\beta}}_{k} & =   \frac{n^{\alpha}}{n^{2\alpha-2\rho\beta}+1} \sum_{k=0}^{n^{2\alpha-2\rho\beta}} ( h^{n^{\beta}}(\theta^{n^{\beta}}_k)-h^{n^{\beta}}(\theta^{*}) - (H(\theta^{n^{\beta}}_k,(U^{n^{\beta}})^{k+1})-H(\theta^{*},(U^{n^{\beta}})^{k+1}))   ) \\
& +  \frac{n^{\alpha}}{n^{2\alpha-2\rho\beta}+1} \sum_{k=0}^{n^{2\alpha-\beta}T}  ( H(\theta_k, U^{k+1}) - H(\theta^*, U^{k+1}) - (h(\theta_k) - h(\theta^*) )  ) \\
& = A_n + B_n.
\end{align*}

From the Cauchy-Schwarz inequality and Lemma \ref{sstrongerror:tech:lemme} it easily follows
$$
\E|A_n| +\E|B_n| \leq   \frac{1}{n^{\alpha-2\rho\beta}} \left(\sum_{k=0}^{n^{2\alpha-2\rho\beta}}\gamma_k\right)^{\frac12} \rightarrow 0
$$

\noindent since $a > \alpha/(2\alpha-2\rho\beta)> \rho\beta/(\alpha-\rho\beta) $ so that
%
%
$$
\frac{n^{\alpha}}{n^{2\alpha-2\rho\beta}+1} \sum_{k=0}^{n^{2\alpha-2\rho\beta}} \Delta R^{n^{\beta}}_{k} \overset{\P}{\longrightarrow} 0, \ \ n\rightarrow +\infty.
$$

We now prove a CLT for the sequence $\left\{ \frac{n^{\alpha}}{n^{2\alpha-2\rho\beta}+1} \sum_{k=0}^{n^{2\alpha-2\rho\beta}} \Delta N^{n^{\beta}}_{k}, \ n\geq0  \right\}$. We first note 
\begin{align*}
 \sum_{k=0}^{n^{2\alpha-2\rho\beta}} \E  \left| \frac{n^{\alpha}}{n^{2\alpha-2\rho\beta}+1} \Delta N^{n^{\beta}}_{k} \right|^{2+\delta} & \leq \sup_{n\geq1} \sup_{k\in \leftB 0, n \rightB} \E \left| n^{\rho\beta}  \Delta N^{n^{\beta}}_{k} \right|^{2+\delta} \frac{1}{n^{\alpha \delta- \rho\beta\delta}} \rightarrow 0 \ \ n\rightarrow +\infty
\end{align*}

\noindent where we used assumptions \A{HLH} and \A{HSR} to derive that $\sup_{n\geq1} \sup_{k\in \leftB 1, n \rightB} \E \left| n^{\rho\beta}  \Delta N^{n^{\beta}}_{k} \right|^{2+\delta} < +\infty$. Therefore the conditional Lindeberg condition is satisfied. Then we examine the conditional variance. 
Recall that (see the the proof of Lemma \ref{conv:lem:2step}) we have 
\begin{align*}
n^{2\rho\beta} \E_{k}[\Delta N^{n^{\beta}}_{k} (\Delta N^{n^{\beta}}_{k})^{T}] & = n^{2\rho\beta} \E\left[(H(\theta^*,U^{n^{\beta}}) - H(\theta^*, U) - (h^{n^{\beta}}(\theta^*)-h(\theta^*))) \right. \\
& \left.  \ \ \ \ \ \ \times (H(\theta^*, U^{n^{\beta}}) - H(\theta^*, U) - (h^{n^{\beta}}(\theta^*)-h(\theta^*)))^{T} \right] \\
& \rightarrow \tilde{\E}\left[ \left(D_xH(\theta^{*}, U) V - \tilde{\E}[D_xH(\theta^{*}, U) V]\right) \left(D_xH(\theta^{*}, U) V- \tilde{\E}[D_xH(\theta^{*}, U) V]\right)^{T}\right],
\end{align*}

\noindent so that if we set
\begin{align*}
S_{n} & :=  \frac{n^{2\alpha} }{(n^{2\alpha-2\rho\beta}+1)^2} \sum_{k=0}^{n^{2\alpha-2\rho\beta}} \E_{k}[ \Delta N^{n^{\beta}}_{k} (\Delta N^{n^{\beta}}_{k})^{T}]  \\
& = \frac{n^{2\alpha-2\rho\beta} }{n^{2\alpha-2\rho\beta}+1} n^{2\rho\beta} \E\left[(H(\theta^*,U^{n^{\beta}}) - H(\theta^*, U) - (h^{n^{\beta}}(\theta^*)-h(\theta^*)))(H(\theta^*,U^{n^{\beta}}) - H(\theta^*, U) - (h^{n^{\beta}}(\theta^*)-h(\theta^*)))^{T} \right], \\ 
\end{align*}

\noindent we clearly get 
$$
S_n \rightarrow \tilde{\E} \left(D_xH(\theta^{*}, U) V\right) \left(D_xH(\theta^{*}, U) V\right)^{T}.
$$

\noindent This completes the proof.
\subsection{Proof of Lemma \ref{conv:lem:2step}}
We come back to the decomposition used in the proof of Lemma \ref{lem:conv:opt:tradeoff}. We consequently use the same notations. We will not go into all computational details. We deal with the case $\rho \in (0,1/2)$. The case $\rho=1/2$ can be handled in a similar fashion.

We first write for $p\geq0$
\begin{align*}
\theta^{m^{\ell}}_{p+1}-\theta^{*,m^{\ell}} & = \theta^{m^{\ell}}_{p} - \theta^{*,m^{\ell}} - \gamma_{p+1}  Dh^{m^{\ell}}(\theta^{*,m^{\ell}}) (\theta^{m^{\ell}}_p-\theta^{*,m^{\ell}})  + \gamma_{p+1} \Delta M^{m^{\ell}}_{p+1} + \gamma_{p+1} \zeta^{m^{\ell}}_p 
\end{align*}

\noindent with $\Delta M^{m^{\ell}}_{p+1} = h^{m^{\ell}}(\theta^{m^{\ell}}_p) - H(\theta^{m^{\ell}}_p,(X^{m^{\ell}}_T)^{p+1})$ and $\zeta^{m^{\ell}}_p = \O(|\theta^{m^{\ell}}_{p+1}-\theta^{*,m^{\ell}}|^2)$, $p\geq0$. Therefore, defining $z^{\ell}_{p} = \theta^{m^{\ell}}_{p} - \theta^{m^{\ell-1}}_{p}  - (\theta^{*,m^{\ell}} - \theta^{*,m^{\ell-1}})$, $p\geq0$, with $z^{\ell}_{0} = \theta^{m^{\ell}}_0 - \theta^{m^{\ell}}_0 - (\theta^{*,m^{\ell}}- \theta^{*,m^{\ell-1}})$, by a simple induction argument one has
\begin{align}
z^{\ell}_{M_{\ell}} & = \Pi_{1, M_{\ell}} z^{\ell}_{0}  +  \sum_{k=1}^{M_{\ell}} \gamma_k  \Pi_{k+1, M_{\ell}} \Delta N^{\ell}_{k}  +   \sum_{k=1}^{M_{\ell}} \gamma_k  \Pi_{k+1, M_{\ell}}  \Delta R^{\ell}_{k} \nonumber \\
& +  \sum_{k=1}^{M_{\ell}} \gamma_k  \Pi_{k+1, M_{\ell}} \left( \zeta^{\ell}_{k-1} - \zeta^{\ell-1}_{k-1} + (Dh(\theta^{*})-Dh^{m^{\ell}}(\theta^{*,m^{\ell}}))(\theta^{m^{\ell}}_{k-1}-\theta^{*,m^{\ell}}) \right. \nonumber \\
& \left. - (Dh(\theta^{*})-Dh^{m^{\ell-1}}(\theta^{*,m^{\ell-1}}))(\theta^{m^{\ell-1}}_{k-1}-\theta^{*,m^{\ell-1}}) \right) \label{rec:two:level}
\end{align}

\noindent where $\Pi_{k, n}:= \prod_{j=k}^{n} \left(I_d - \gamma_{j} Dh(\theta^{*})\right)$, with the convention that $\Pi_{n+1, n} = I_d$, and $\Delta N^{\ell}_{k} := h^{m^{\ell}}(\theta^{*}) - h^{m^{\ell-1}}(\theta^{*}) - (H(\theta^{*},(U^{m^{\ell}})^{k+1}) - H(\theta^{*}, (U^{m^{\ell-1}})^{k+1}))$, $\Delta R^{\ell}_{k} = h^{m^{\ell}}(\theta^{m^{\ell}}_k)-h^{m^{\ell}}(\theta^{*}) - (H(\theta^{m^{\ell}}_k,(U^{m^{\ell}})^{k+1}) - H(\theta^{*}, (U^{m^{\ell}})^{k+1}))  + H(\theta^{m^{\ell-1}}_k,(U^{m^{\ell-1}})^{k+1}) - H(\theta^*,(U^{m^{\ell-1}})^{k+1}) - (h^{m^{\ell-1}}(\theta^{m^{\ell-1}}_k) - h^{m^{\ell-1}}(\theta^{*}) ) $ for $k\geq0$. We follow the same methodology developed so far and quantify the contribution of each term. Once again the weak rate of convergence will be ruled by the second term which involves the non-linearity in the innovation variable $(U^{m^{\ell-1}},U^{m^{\ell}})$, for which we prove a CLT.

\noindent \textbf{Step 1: study of $\left\{n^{\alpha}\sum_{\ell=1}^{L} \Pi_{1, M_{\ell}} z^{\ell}_{0}, n\geq0 \right\}$}

Under the assumptions on the step sequence $\gamma$, for all $\eta \in (0, \lambda_m)$ we have $\|\Pi_{1, M_{\ell}}\| \leq \exp(-(\lambda_m-\eta)\sum_{k=1}^{M_{\ell}} \gamma_k) = C m^{\ell \frac{(1+2\rho)}{2} (\lambda_m-\eta)\gamma_0}/((n^{2\alpha+ \frac{1-2\rho}{2}})^{(\lambda_m-\eta)\gamma_0} )$ if $\gamma(p)=\gamma_0/p$ or $\|\Pi_{1, M_{\ell}}\| = \O(\gamma(M_\ell))$ otherwise. Therefore,  if $\gamma(p)=\gamma_0/p$ we select $\eta>0$ such that $\gamma_0(\lambda_m-\eta)>\alpha/(\alpha-2\rho)$ then one has
\begin{align*}
\E\left| n^{\alpha} \sum_{\ell=1}^{L} \Pi_{1, M_{\ell}} z^{\ell}_{0} \right| & \leq C n^{\alpha} \sum_{\ell=1}^{L}  \|\Pi_{1, M_{\ell}}\| \leq  \frac{C}{n^{(\lambda_m-\eta)\gamma_0 (2\alpha+ \frac{1-2 \rho}{2})-\alpha} } \sum_{\ell=1}^{L} m^{\ell(\lambda_m-\eta)\gamma_0 \frac{1+2\rho}{2}} \leq \frac{C}{n^{2(\lambda_m-\eta)\gamma_0(\alpha- 2\rho)-\alpha}} \rightarrow 0
\end{align*}

\noindent as $n\rightarrow + \infty$. Otherwise one has
$$
\E\left| n^{\alpha} \sum_{\ell=1}^{L} \Pi_{1, M_{\ell}} z^{\ell}_{0} \right| \leq C n^{\alpha} \sum_{\ell =1}^{L} \gamma(M_\ell) \leq C \frac{n^{\frac{1+2\rho}{2}}}{n^{\alpha + \frac{1-2\rho}{2}}}=\frac{C}{n^{\alpha-2 \rho}}\rightarrow 0.
$$

%

\noindent \textbf{Step 2: study of  $\left\{n^{\alpha}  \sum_{\ell=1}^{L} \sum_{k=1}^{M_{\ell}} \gamma_k  \Pi_{k+1, M_{\ell}} \left( \zeta^{\ell}_{k-1} - \zeta^{\ell-1}_{k-1} \right), n\geq0 \right\}$}

By Lemma \ref{sstrongerror:tech:lemme}, one has
\begin{align*}
\E \left| n^{\alpha} \sum_{\ell=1}^{L} \sum_{k=1}^{M_{\ell}} \gamma_k  \Pi_{k+1, M_{\ell}} \left( \zeta^{\ell}_{k-1} - \zeta^{\ell-1}_{k-1} \right)  \right| & \leq C n^{\alpha} \sum_{\ell=1}^{L} \sum_{k=1}^{M_{\ell}} \gamma^{2}_k  \|\Pi_{k+1, M_{\ell}}\|.
\end{align*}

\noindent However, by Lemma \ref{stepseq:tech:lemme} (if $\gamma(p)=\gamma_0/p$ recall that $\lambda_m \gamma_0 > 1$) we easily derive $
\limsup_{n} \frac{1}{\gamma(n)} \sum_{k=1}^{n}  \gamma^{2}_k  \|\Pi_{k+1, n}\| \leq  1$, so that
\begin{align*}
n^{\alpha} \sum_{\ell=1}^{L} \sum_{k=1}^{M_{\ell}} \gamma^{2}_k  \|\Pi_{k+1, M_{\ell}}\| & \leq C n^{\alpha} \sum_{\ell=1}^{L} \gamma(M_\ell) \rightarrow 0, \  \ n\rightarrow + \infty.
\end{align*}

\noindent \textbf{Step 3: study of  $\left\{n^{\alpha} \sum_{\ell=1}^{L} \sum_{k=1}^{M_{\ell}} \gamma_k  \Pi_{k+1, M_{\ell}} \left( (Dh(\theta^{*})-Dh^{m^{\ell}}(\theta^{*,m^{\ell}}))(\theta^{m^{\ell}}_{k-1}-\theta^{*,m^{\ell}}) \right), n\geq0 \right\}$ \\ 
and $\left\{ n^{\alpha} \left( \sum_{\ell=1}^{L} \sum_{k=1}^{M_{\ell}} \gamma_k  \Pi_{k+1, M_{\ell}} (Dh(\theta^{*})-Dh^{m^{\ell-1}}(\theta^{*,m^{\ell-1}}))(\theta^{m^{\ell-1}}_{k-1}-\theta^{*,m^{\ell-1}}) \right), n\geq0 \right\}$}

By Lemma \ref{sstrongerror:tech:lemme} and since $Dh^{m^{\ell}}$ is a Lipschitz function uniformly in $m$ we clearly have
\begin{align*}
\E\left| n^{\alpha} \sum_{\ell=1}^{L} \sum_{k=1}^{M_{\ell}} \gamma^{3/2}_k \Pi_{k+1, M_{\ell}}  (Dh(\theta^{*})-Dh^{m^{\ell}}(\theta^{*,m^{\ell}}))(\theta^{m^{\ell}}_{k-1}-\theta^{*,m^{\ell}}) \right| & \leq  n^{\alpha} \sum_{\ell=1}^{L} \sum_{k=1}^{M_{\ell}} \gamma^{3/2}_k \|\Pi_{k+1, n}\|  \\ 
& \ \ \ \ \ \ \  \times (\|Dh(\theta^{*})-Dh^{m^{\ell}}(\theta^{*})\| +  |\theta^{*,m^{\ell}} -\theta^{*})|) \\
& \leq C n^{\alpha} \sum_{\ell=1}^{L} \gamma^{1/2}(M_{\ell}) (\|Dh(\theta^{*})-Dh^{m^{\ell}}(\theta^{*})\| +  |\theta^{*,m^{\ell}} -\theta^{*})|)
\end{align*}

\noindent which combined with $\sup_{n\geq1}n^{\beta} \|Dh(\theta^{*})-Dh^{n}(\theta^{*})\|<+\infty$ with $\beta>\rho$ and $\sup_{n\geq1} n^{\alpha} |\theta^{*,n} -\theta^{*}| < +\infty$ imply that 
\begin{align*}
\E\left| n^{\alpha} \sum_{\ell=1}^{L} \sum_{k=1}^{M_{\ell}} \gamma_k \Pi_{k+1, M_{\ell}}  (Dh(\theta^{*})-Dh^{m^{\ell}}(\theta^{*,m^{\ell}}))(\theta^{m^{\ell}}_{k-1}-\theta^{*,m^{\ell}}) \right| & \leq \frac{C}{n^{\frac{(1-2\rho)}{4}}} \sum_{\ell=1}^{L} m^{\ell\frac{(1+2\rho)}{4}}(m^{-\ell\alpha} + m^{-\ell \beta})\\
& \leq C(n^{\rho-\alpha}+n^{\rho-\beta})
\end{align*}

\noindent so that $n^{\alpha} \sum_{\ell=1}^{L} \sum_{k=1}^{M_{\ell}} \gamma_k \Pi_{k+1, M_{\ell}}  (Dh(\theta^{*})-Dh^{m^{\ell}}(\theta^{*,m^{\ell}}))(\theta^{m^{\ell}}_{k-1}-\theta^{*,m^{\ell}}) \overset{L^{1}(\P)}{\longrightarrow} 0$. By similar arguments, we easily deduce $n^{\alpha} \sum_{\ell=1}^{L} \sum_{k=1}^{M_{\ell}} \gamma_k \Pi_{k+1, M_{\ell}}  (Dh(\theta^{*})-Dh^{m^{\ell-1}}(\theta^{*,m^{\ell-1}}))(\theta^{m^{\ell-1}}_{k-1}-\theta^{*,m^{\ell-1}})\overset{L^{1}(\P)}{\longrightarrow} 0$.

\noindent \textbf{Step 4: study of $\left\{n^{\alpha} \sum_{\ell=1}^{L}\sum_{k=1}^{M_{\ell}} \gamma_k  \Pi_{k+1, M_{\ell}}  \Delta R^{\ell}_{k}, n\geq0 \right\}$}

Using the Cauchy-Schwarz inequality we deduce
\begin{align*}
\E\left| n^{\alpha} \sum_{\ell=1}^{L}\sum_{k=1}^{M_{\ell}} \gamma_k  \Pi_{k+1, M_{\ell}}  \Delta R^{\ell}_{k} \right|^2 & \leq  n^{\alpha} \sum_{\ell=1}^{L} \left(\sum_{k=1}^{M_{\ell}} \gamma^{2}_k  \|\Pi_{k+1, M_{\ell}}\|^2 \E|H(\theta^{m^{\ell}}_k, (U^{m^{\ell}})^{k+1}) - H(\theta^{*},(U^{m^{\ell}})^{k+1})|^2\right)^{1/2}\\
& +  n^{\alpha} \sum_{\ell=1}^{L} \left(\sum_{k=1}^{M_{\ell}} \gamma^{2}_k  \|\Pi_{k+1, M_{\ell}}\|^2 \E|H(\theta^{m^{\ell-1}}_k,(U^{m^{\ell-1}})^{k+1}) - H(\theta^{*},(U^{m^{\ell-1}})^{k+1})|^2\right)^{1/2} \\
& \leq C n^{\alpha} \sum_{\ell=1}^{L} \left(\sum_{k=1}^{M_{\ell}} \gamma^{3}_k  \|\Pi_{k+1, M_{\ell}}\|^2\right)^{1/2} 
\end{align*}

\noindent where we used \A{HLH} and Lemma \ref{sstrongerror:tech:lemme}. Now from Lemma \ref{stepseq:tech:lemme} and simple computations it follows
$$
n^{\alpha} \sum_{\ell=1}^{L} \left( \sum_{k=1}^{M_{\ell}} \gamma^{3}_k  \|\Pi_{k+1, M_{\ell}}\|^2\right)^{1/2} \leq C n^{\alpha} \sum_{\ell=1}^{L} \gamma(M_\ell) \rightarrow 0, \ \ n\rightarrow + \infty.
$$


Therefore, we conclude that
$$
n \sum_{\ell=1}^{L}\sum_{k=1}^{M_{\ell}} \gamma_k  \Pi_{k+1, M_{\ell}}  \Delta R^{\ell}_{k} \overset{L^{2}(\P)}{\longrightarrow}0, \ \ n\rightarrow + \infty.
$$

\noindent \textbf{Step 5: study of $\left\{n^{\alpha} \sum_{\ell=1}^{L}\sum_{k=1}^{M_{\ell}} \gamma_k  \Pi_{k+1, M_{\ell}}  \Delta N^{\ell}_{k}, n\geq0 \right\}$}

We now prove a CLT for the sequence $\left\{ n^{\alpha} \sum_{\ell=1}^{L}\sum_{k=1}^{M_{\ell}} \gamma_k  \Pi_{k+1, M_{\ell}}  \Delta N^{\ell}_{k}, \ n\geq0  \right\}$. By Burkholder's inequality and elementary computations, it holds
\begin{align*}
\sum_{\ell=1}^{L}\E   \left| \sum_{k=1}^{M_{\ell}}  n^{\alpha} \gamma_k \Pi_{k+1, M_{\ell}} \Delta N^{\ell}_{k} \right|^{2+\delta} & \leq C n^{(2+\delta)\alpha} \sum_{\ell=1}^{L} \E\left(\sum_{k=1}^{M_{\ell}} \gamma^{2}_k \|\Pi_{k+1,M_{\ell}}\|^{2} |\Delta N^{\ell}_{k}|^2   \right)^{1+\delta/2} \\
& \leq C n^{(2+\delta)\alpha} \sum_{\ell=1}^{L} (\sum_{k=1}^{M_{\ell}} \gamma^{2}_k \|\Pi_{k+1,M_{\ell}}\|^{2})^{\delta/2} \sum_{k=1}^{M_{\ell}} \gamma^{2+\delta}_k \|\Pi_{k+1,M_{\ell}}\|^{2+\delta} \E|\Delta N^{\ell}_{k}|^{2+\delta}.
\end{align*}

Using \A{HLH} and \A{HSR} we have $\sup_{\ell\geq1} \E (m^{\rho\ell}|H(\theta^*,U^{m^{\ell}}) - H(\theta^*, U)|)^{2+\delta} < +\infty$ so that
$$
\E|\Delta N^{\ell}_{k}|^{2+\delta} \leq \frac{K}{m^{\ell (2\rho+\rho \delta)}}.
$$

Moreover, by Lemma \ref{stepseq:tech:lemme}, we have 
$$
\lim\sup_{n} (1/\gamma^{(1+\delta)}(n)) \sum_{k=1}^{n} \gamma^{2+\delta}_k \|\Pi_{k+1, n}\|^{2+\delta} \leq 1\ \ \mbox{and} \ \ \lim\sup_{n} (1/\gamma(n)) \sum_{k=1}^{n} \gamma^{2}_k \|\Pi_{k+1, n}\|^{2} \leq 1.
$$

\noindent Consequently we deduce
\begin{align*}
\sum_{\ell=1}^{L}\E   \left| \sum_{k=1}^{M_{\ell}}  n^{\alpha} \gamma_k \Pi_{k+1, M_{\ell}} \Delta N^{\ell}_{k} \right|^{2+\delta} & \leq C n^{ (2+\delta)\alpha} \sum_{\ell=1}^{L} \gamma^{1+3\delta/2}(M_\ell) m^{-\ell(2\rho+\rho\delta)} \leq \frac{C}{n^{2 \alpha \delta}} n^{2\rho(1+3\delta/2)-2\rho-\rho\delta} = \frac{C}{n^{2\delta(\alpha-\rho)}}
\end{align*}

\noindent which in turn implies
$$
\sum_{\ell=1}^{L}\E   \left| \sum_{k=1}^{M_{\ell}}  n^{\alpha} \gamma_k \Pi_{k+1, M_{\ell}} \Delta N^{\ell}_{k} \right|^{2+\delta} \rightarrow 0, \ \ n\rightarrow +\infty
$$

\noindent so that the conditional Lindeberg condition is satisfied.
%
%
Now, we focus on the conditional variance. We set 
\begin{equation}\label{cond:var:eq}
S_{\ell} :=  n^{2\alpha} \sum_{k=1}^{M_{\ell}} \gamma^{2}_k  \Pi_{k+1, M_{\ell}} \E_{k}[ \Delta N^{\ell}_{k} (\Delta N^{\ell}_{k})^{T}] \Pi^{T}_{k+1, M_{\ell}}, \ \ \mbox{and} \ \ U^{\ell}= U^{m^{\ell}} - U^{m^{\ell-1}}.
\end{equation}

Observe that by the very definition of $M_{\ell}$ one has
$$
S_{\ell} = \frac{1}{\gamma(M_\ell)} (m^{\frac{1-2\rho}{2}}-1)\frac{m^{\ell\frac{(1+2\rho)}{2}}}{n^{\frac{1-2\rho}{2}}-1} \sum_{k=1}^{M_{\ell}} \gamma^{2}_k  \Pi_{k+1, M_{\ell}} \E_{k}[ \Delta N^{\ell}_{k} (\Delta N^{\ell}_{k})^{T}] \Pi^{T}_{k+1, M_{\ell}}
$$
%

A Taylor's expansion yields
\begin{align*}
H(\theta^{*}, U^{m^{\ell}}) - H(\theta^{*}, U^{m^{\ell-1}})  & = D_xH(\theta^{*}, U) U^{\ell} + \psi(\theta^{*}, U, U^{m^{\ell}} - U) (U^{m^{\ell}} -U) + \psi(\theta^{*}, U, U^{m^{\ell-1}} - U) (U^{m^{\ell-1}} - U)
\end{align*}

\noindent with $(\psi(\theta^{*}, U, U^{m^{\ell}} - U), \psi(\theta^{*}, U, U^{m^{\ell-1}} - U)) \overset{\P}{\longrightarrow} 0$ as $\ell\rightarrow +\infty$. From the tightness of the sequences $( m^{\rho\ell}(U^{m^{\ell}} - U))_{\ell\geq1}$ and $(m^{\rho \ell}(U^{m^{\ell-1}} - U))_{\ell\geq1}$, we get 
$$
m^{\rho \ell} \left(\psi(\theta^{*}, U, U^{m^{\ell}} - U) (U^{m^{\ell}} - U) + \psi(\theta^{*}, U, U^{m^{\ell-1}} - U) (U^{m^{\ell-1}} - U) \right) \overset{\P}{\longrightarrow} 0, \ \ \ell\rightarrow +\infty.
$$

Therefore using Theorem \ref{weak:conv:euler} and Lemma \ref{lemme:conv:stab} yield 
$$
m^{\rho\ell} \left(H(\theta^{*}, U^{m^{\ell}}) - H(\theta^{*}, U^{m^{\ell-1}})\right)  \Longrightarrow  D_xH(\theta^*, U) V^{m} .
$$

Moreover, from assumption \A{HLH} and \A{HRH} it follows that
$$
  \sup_{\ell \geq1} \E\left|m^{\rho \ell}(H(\theta^{*},U^{m^{\ell}}) - H(\theta^{*}, U^{m^{\ell-1}})) \right|^{2+\delta} < + \infty,
$$

\noindent which combined with \A{HDH} imply
\begin{align*}
m^{\rho\ell}\E [H(\theta^{*},U^{m^{\ell}}) - H(\theta^{*}, U^{m^{\ell-1}})] & \rightarrow \tilde{\E}[D_xH(\theta^{*}, U) V^{m}] \\
m^{2\rho\ell} \E[ (H(\theta^{*}, U^{m^{\ell}}) - H(\theta^{*}, U^{m^{\ell-1}})) (H(\theta^{*}, U^{m^{\ell}}) - H(\theta^{*},U^{m^{\ell-1}}))^{T}] & \rightarrow \tilde{\E}[ \left(D_xH(\theta^{*}, U) V^{m}\right) \left(D_xH(\theta^{*}, U) V^{m}\right)^{T}]
\end{align*}

\noindent as $\ell \rightarrow + \infty$. Hence, we have
$$
m^{2\rho\ell} \Gamma_\ell \rightarrow \Gamma^{*}:= \tilde{\E}\left[ \left(D_xH(\theta^{*}, U) V^{m} - \tilde{\E}[D_xH(\theta^{*}, U) V^{m}]\right) \left(D_xH(\theta^{*}, U) V^{m} -\tilde{\E}[D_xH(\theta^{*}, U) V^{m}]\right)^{T}\right]
$$

\noindent where for $\ell\geq1$
\begin{align*}
\Gamma_{\ell} & := \E_{k}[ \Delta N^{\ell}_{k} (\Delta N^{\ell}_{k})^{T}]  \\
& = \E[(H(\theta^{*},U^{m^{\ell}}) - H(\theta^{*}, U^{m^{\ell-1}})) (H(\theta^{*}, U^{m^{\ell}}) - H(\theta^{*}, U^{m^{\ell-1}}))^{T}] -  (h^{m^{\ell}}(\theta^{*}) - h^{m^{\ell-1}}(\theta^{*}))(h^{m^{\ell}}(\theta^{*}) - h^{m^{\ell-1}}(\theta^{*}))^{T} .
\end{align*}
 Consequently, using the following decomposition
\begin{align*}
\frac{1}{\gamma(M_{\ell})} m^{2\rho\ell} \sum_{k=1}^{M_{\ell}} \gamma^{2}_k  \Pi_{k+1, M_{\ell}} \Gamma_\ell \Pi^{T}_{k+1, M_{\ell}} & = \frac{1}{\gamma(M_{\ell})} \sum_{k=1}^{M_{\ell}} \gamma^{2}_k  \Pi_{k+1, M_{\ell}} \Gamma^{*} \Pi^{T}_{k+1, M_{\ell}} \\
& + \frac{1}{\gamma(M_{\ell})} \sum_{k=1}^{M_{\ell}} \gamma^{2}_k  \Pi_{k+1, M_\ell}  \left(m^{2\rho\ell}\Gamma_\ell-\Gamma^{*}\right) \Pi^{T}_{k+1, M_\ell}
\end{align*}

\noindent with 
$$
\lim\sup_{\ell} \frac{1}{\gamma(M_{\ell})} \left\| \sum_{k=1}^{M_{\ell}} \gamma^{2}_k  \Pi_{k+1, M_\ell}  \left( m^{2\rho \ell}\Gamma_\ell-\Gamma^{*}\right) \Pi^{T}_{k+1, M_{\ell}} \right\| \leq C \lim\sup_{\ell} \left\|m^{2\rho \ell}  \Gamma_\ell-\Gamma^{*} \right\| = 0,
$$

\noindent which is a consequence of Lemma \ref{stepseq:tech:lemme}, we clearly see that $\frac{n^{\frac{1-2\rho}{2}}-1}{m^{\ell\frac{(1-2\rho)}{2}}(m^{\frac{1-2\rho}{2}}-1)}\lim_{\ell} S_\ell= \lim_{p\rightarrow + \infty} \frac{1}{\gamma(p)} \sum_{k=1}^{p} \gamma^{2}_k  \Pi_{k+1, p} \Gamma^{*} \Pi^{T}_{k+1, p}$ if this latter limit exists. The matrix $\Theta^{*}$ defined by \eqref{asympt:cov:mlvl} is the (unique) matrix $A$ solution to the Lyapunov equation: 
$$
\Gamma^{*} - (Dh(\theta^*)-\zeta I_d) A - A (Dh(\theta^*)-\zeta I_d)^{T} = 0.
$$

Following the lines of the proof of step 3, Lemma \ref{lem:conv:opt:tradeoff}, we have $S_{\ell}\frac{(n^{\frac{1-2\rho}{2}}-1)}{m^{\ell\frac{(1-2\rho)}{2}}(m^{\frac{1-2\rho}{2}}-1)} \overset{a.s.}{\longrightarrow} \Theta^{*}$ as $\ell \rightarrow + \infty$. We leave the computational details to the reader. Finally, from Ces\`{a}ro's Lemma it follows that
$$
\sum_{\ell=1}^{L} S_{\ell} = \left(\frac{m^{\frac{1-2\rho}{2}}-1}{n^{\frac{1-2\rho}{2}}-1}\right) \sum_{\ell=1}^{L} \left(S_{\ell}\frac{(n^{\frac{1-2\rho}{2}}-1)}{m^{\ell\frac{(1-2\rho)}{2}}(m^{\frac{1-2\rho}{2}}-1)}\right) m^{\ell\frac{(1-2\rho)}{2}} \overset{a.s.}{\underset{n\rightarrow + \infty}{\longrightarrow}} \Theta^{*}.
$$

 \section{Numerical Results}\label{num:res:sec}
In this section we illustrate the results obtained in Section \ref{gen:frame:sec}.

\subsection{Computation of quantiles of a one dimensional diffusion process}\label{quant:est:sec}
We first consider the problem of the computation of a quantile at level $l \in (0,1)$ of a one dimensional diffusion process. This quantity, also referred as the Value-at-Risk at level $l$ in the practice of risk management, is the lowest amount not exceeded by $X_T$ with probability $l$, namely
$$
q_{l}(X_T):= \inf \left\{ \theta : \P(X_T \leq \theta) \geq l \right\}.
$$

To illustrate the results of sections \ref{implicit:discret:sec} and \ref{opt:tradeoff:sec}, we consider a simple geometric Brownian motion
\begin{equation}
\label{geom:brow:mot}
X_t = x  + \int_0^t r X_s ds + \int_0^t \sigma X_s dW_s, \ \ t\in [0,T]    
\end{equation}

\noindent for which the quantile is explicitly known at any level $l$. Hence we have $\rho=1/2$. The distribution function of $X_T$ being increasing, $q_{l}(X_T)$ is the unique solution of the equation $h(\theta)=\E_x[H(\theta,X_T)] = 0$ with $H(\theta,x) = \mbox{\textbf{1}}_{\left\{x \leq \theta \right\}} - l$. A simple computation shows that 
 $$
 q_{l}(X_T) = x_0 \exp((r-\sigma^2/2)T + \sigma\sqrt{T} \phi^{-1}(l))
 $$

\noindent where $\phi$ is the distribution function of the standard normal distribution $\mathcal{N}(0,1)$. We associate to the SDE \eqref{geom:brow:mot} its Euler like scheme $X^{n}=(X^{n}_{t})_{t \in [0,T]}$ with time step $\Delta=T/n$. We use the following values for the parameters: $x=100, \ r=0.05, \ \sigma=0.4, \ T=1, \ l = 0.7$. The reference Black-Scholes quantile is $q_{0.7}(X_T) = 119.69$.

\begin{REM}Let us note that when $l$ is close to $0$ or $1$ (usually less than $0.05$ or more than $0.95$) the convergence of the considered SA algorithm is slow and chaotic. This is mainly due to the fact that the procedure obtains few significant samples to update the estimate in this rare event situation. One solution is to combine it with a variance reduction algorithm such as an adaptive importance sampling procedure that will generate more samples in the area of interest, see e.g. \cite{bar:fri:pag:09} and \cite{bar:fri:pag:09:2}.
\end{REM}

 In order to illustrate the result of Theorem \ref{thm:conv:disc}, we plot in Figure \ref{Conv:WeakDisc:Theta:Quantile} the behaviors of $nh^{n}(\theta^*)$ and $n(\theta^{*,n}-\theta^*)$ for $n=100, \cdots,500$. Actually, $h^{n}(\theta^*)$ is approximated by its Monte Carlo estimator and $\theta^{*,n}$ is estimated by $\theta^{n}_{M}$, both estimators being computed with $M=10^{8}$ samples. The variance of the Monte Carlo estimator ranges from $2102.4$ for $n=100$ to $53012.5$ for $n=500$. We set $\gamma_p=\gamma_0/p$ with $\gamma_0=200$. We clearly see that $nh^{n}(\theta^*)$ and $n(\theta^{*,n}-\theta^*)$ are stable with respect to $n$. The histogram of Fig \ref{Hist:SA:quantile} illustrates Theorem \ref{globalTCL}. The distribution of $n(\theta^{n}_{\gamma^{-1}(1/n^2)}-\theta^*)$, obtained with $n=100$ and $N=1000$ samples, is close to a normal distribution.
 
 \begin{figure}[!ht]
\begin{center}
\includegraphics[width=8cm,height=7cm]{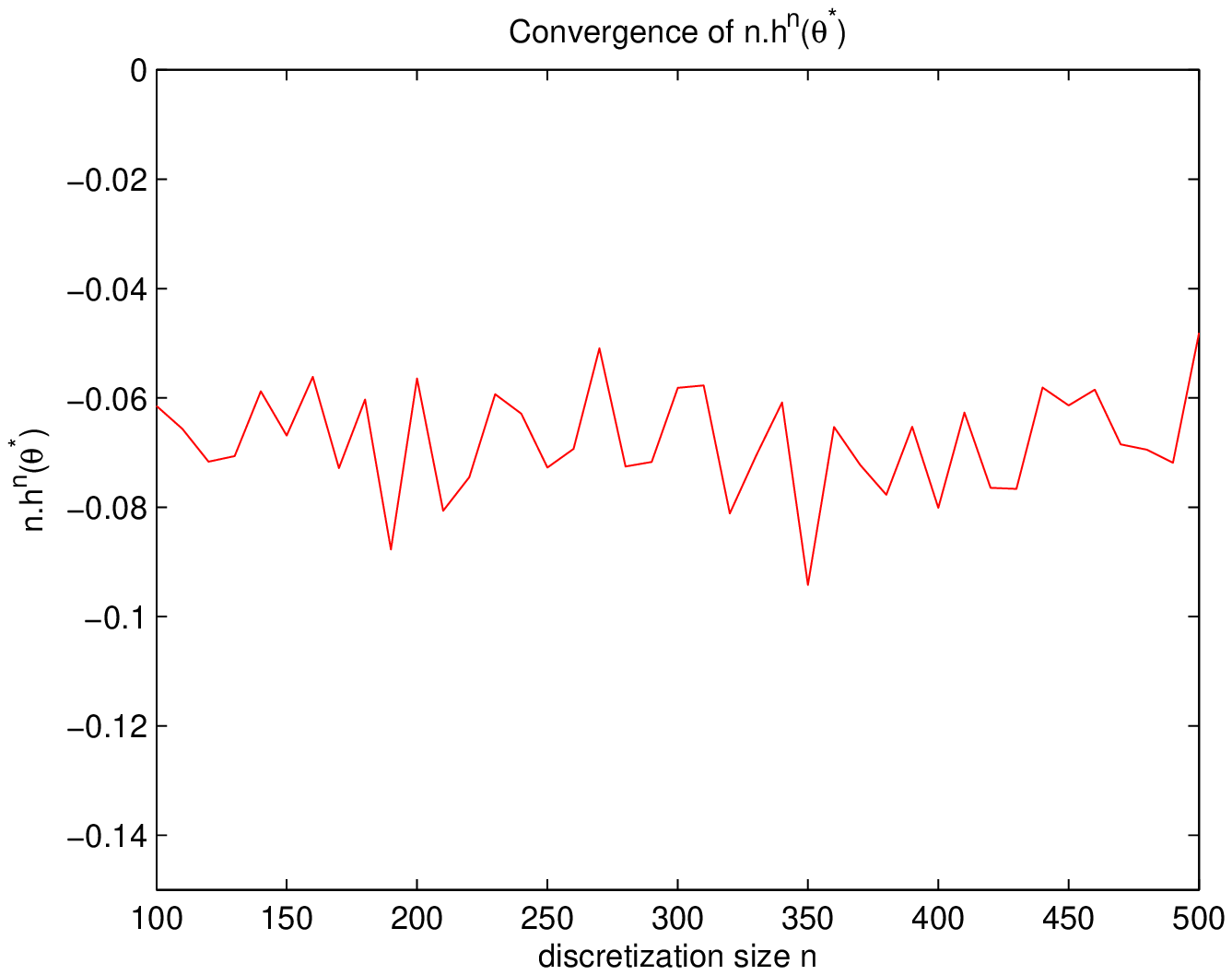}
\includegraphics[width=8cm,height=7cm]{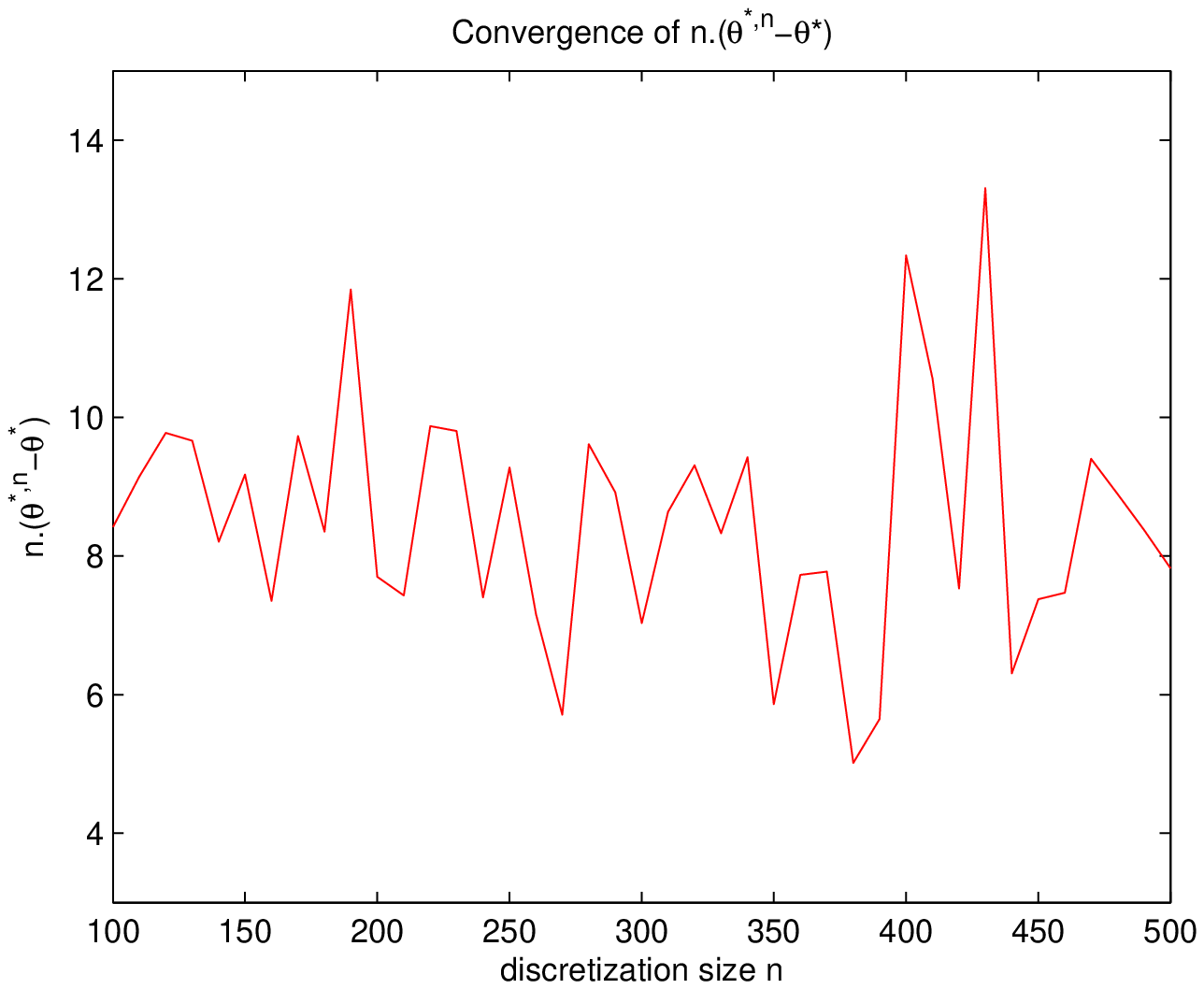}
\caption{On the left: Weak discretization error $n\mapsto n h^{n}(\theta^*)$. On the right: Implicit discretization error $n\mapsto n(\theta^{*,n}-\theta^*)$, $n=100, \cdots, 500$.}
\label{Conv:WeakDisc:Theta:Quantile}
\end{center}
\end{figure}

 \begin{figure}[!ht]
\begin{center}
\includegraphics[width=8cm,height=8cm, keepaspectratio=true]{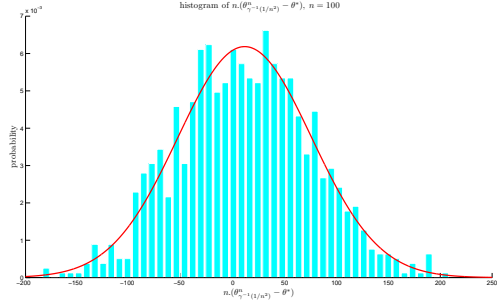}
\caption{Histogram of $n(\theta^{n}_{\gamma^{-1}(1/n^2)}-\theta^*)$, $n=100$, with $N=1000$ samples.}
\label{Hist:SA:quantile}
\end{center}
\end{figure}
 \subsection{Computation of the level of an unknown function}
 We turn our attention to the computation of the level of the function $\theta\mapsto e^{-rT}\E(X_T-\theta)_{+}$ (European call option) for which the closed-form formula under the dynamic \eqref{geom:brow:mot} is given by
 \begin{equation}
 \label{bs:formula}
e^{-rT}\E(X_T-\theta)_{+} = e^{-rT}x \phi(d_+ (x,\theta, \sigma)) -e^{-rT} \theta \phi(d_{-}(x,\theta,\sigma)),
 \end{equation}
 
 \noindent where $d_{\pm}(x,y,z)= \log(x/y)/(z\sqrt{T}) \pm z\sqrt{T}/2$. Therefore, we first fix a value $\theta^*$ (the target of our procedure) and compute the corresponding level $l=\E(X_T-\theta^*)_+$ by \eqref{bs:formula}. The values of the parameters $x, r, \sigma, T$ remain unchanged. We plot in Figure \ref{Conv:WeakDisc:Theta:Call} the behaviors of $nh^{n}(\theta^*)$ and $n(\theta^{*,n}-\theta^*)$ for $n=100, \cdots,500$. As in the previous example, $h^{n}(\theta^*)$ is approximated by its Monte Carlo estimator and $\theta^{*,n}$ is estimated by $\theta^{n}_{M}$, both estimators being computed with $M=10^{8}$ samples. The variance of the Monte Carlo estimator ranges from $9.73 \times 10^{6}$ for $n=100$ to $9.39 \times 10^7$ for $n=500$.

 To compare the three methods to approximate the solution to $h(\theta)= \E_{x}[H(\theta,X_T)]=0$ with $H(\theta,x) = l - (x-\theta)_{+}$ in terms of computational costs, we compute the different estimators, namely $\theta^{n}_{\gamma^{-1}(1/n^2)}$ where $(\theta^{n}_p)_{p\geq1}$ is given by \eqref{RM}, $\Theta^{sr}_n$ and $\Theta^{ml}_n$ for a set of $N=200$ values of the target $\theta^*$ equidistributed on the interval $[90,110]$ and for different values of $n$. For each value $n$ and for each method we compute the complexity given by \eqref{Compl:SA}, \eqref{Compl:SR-SA} and \eqref{Compl:ML-SA} respectively and the root-mean-squared error which is given by
 $$
 \text{RMSE} = \left(\frac{1}{N} \sum_{k=1}^{N} (\Theta^{n}_k - \theta^{*}_{k})^2 \right)^{1/2}
 $$  
 
 \noindent where $\Theta^{n}_k= \theta^{n}_{\gamma^{-1}(1/n^2)}, \ \Theta^{sr}_n$ or $\Theta^{ml}_n$ is the considered estimator. For each given $n$, we provide a couple $(\text{RMSE}, \text{Complexity})$ which is plotted on Figure \ref{Compl:RMSE}. Let us note that the multi-level SA estimator has been computed for different values of $m$ (ranging from $m=2$ to $m=7$) and different values of $L$. We set $\gamma(p)=\gamma_0/p$, with $\gamma_0=2$, $p\geq1$, so that $\beta^{*}=1/2$.

\begin{figure}[!ht]
\begin{center}
\includegraphics[width=8cm,height=7cm]{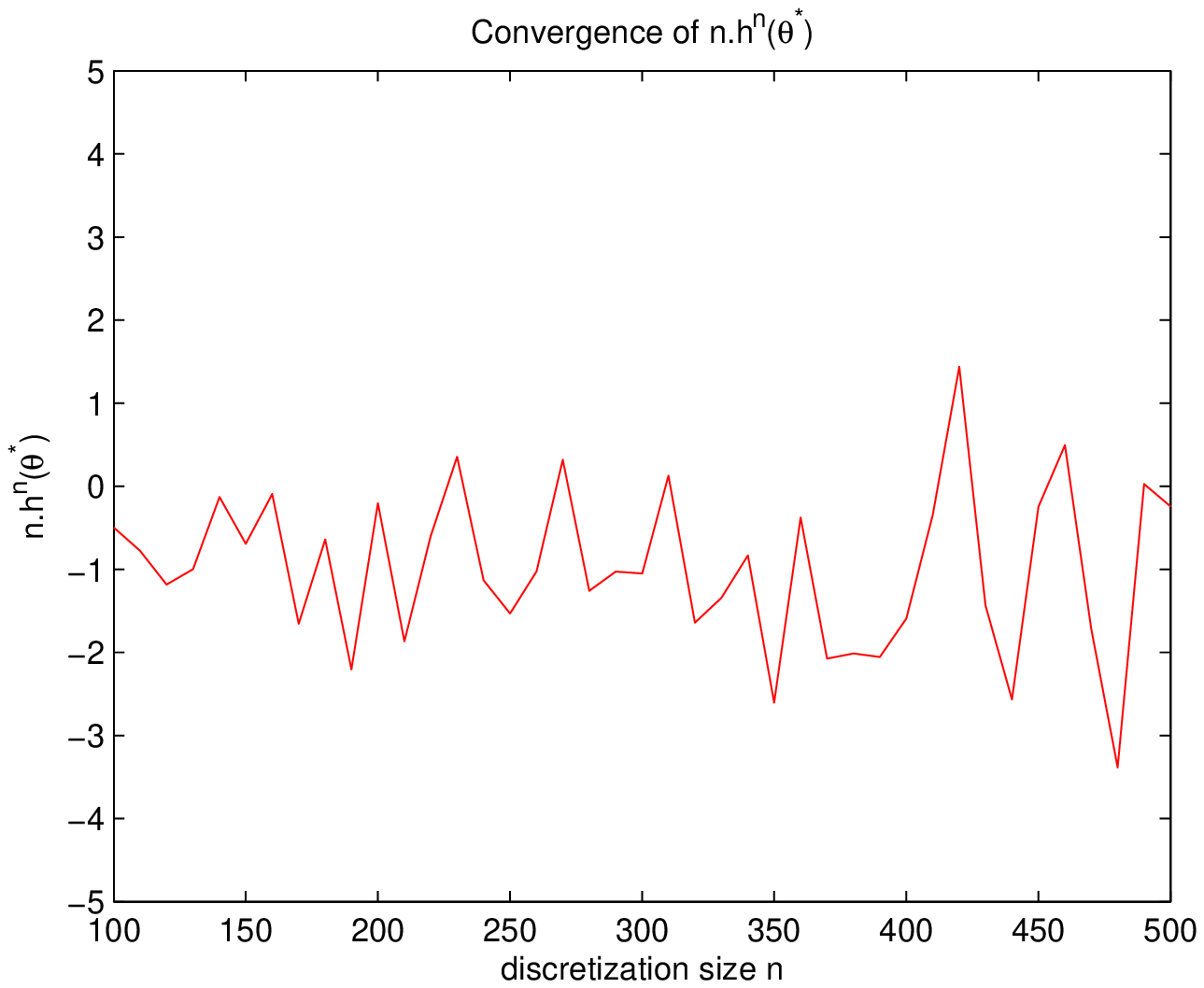}
\includegraphics[width=8cm,height=7cm]{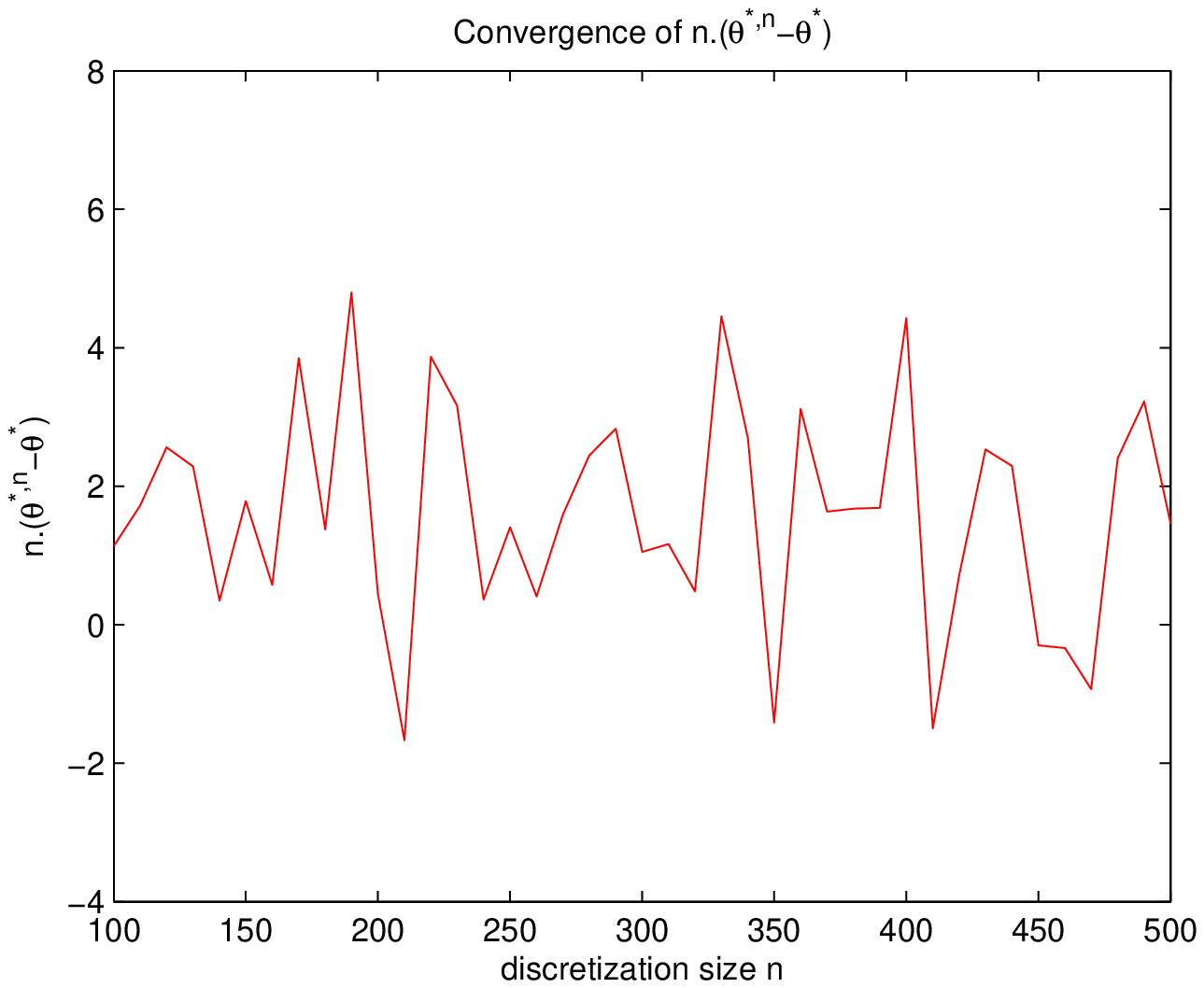}
\caption{On the left: Weak discretization error $n\mapsto n h^{n}(\theta^*)$. On the right: Implicit discretization error $n\mapsto n(\theta^{*,n}-\theta^*)$, $n=100, \cdots, 500$.}
\label{Conv:WeakDisc:Theta:Call}
\end{center}
\end{figure}

From a practical point of view, it is of interest to use the information provided at level 1 by the Statistical Romberg SA estimator and at each level by the multi-level SA estimator. More precisely, the initialization point of the SA procedures devised to compute the correction terms $\theta^{n}_{\gamma_0 n^{3/2}} - \theta^{\sqrt{n}}_{\gamma_0 n^{3/2}}$ (for the statistical Romberg SA) and $\theta^{m^{\ell}}_{M_{\ell}} - \theta^{m^{\ell-1}}_{M_{\ell}}$ (for the Multi-level SA) at level $\ell$ are fixed to $\theta^{\sqrt{n}}_{\gamma_0 n^{2}}$ and to $ \theta^{1}_{\gamma_0 n^{2}}  + \sum_{\ell=1}^{L-1} \theta^{m^{\ell}}_{M_\ell} - \theta^{m^{\ell-1}}_{M_{\ell}}$ respectively.  We set $\theta^{n^{1/2}}_0= \theta^{1}_{0} = x$ for all $k\in \left\{1, \cdots, M\right\}$ to initialize the procedures. Moreover, by Lemma \ref{sstrongerror:tech:lemme}, the $L^{1}(\P)$-norm of an increment of a SA algorithm is of order $\sqrt{\gamma_0/p}$ since $\E|\theta^{n}_{p+1}-\theta^{n}_p| \leq \E[|\theta^{n}_{p+1}-\theta^{*,n}|^2]^{1/2} + \E[|\theta^{n}_p-\theta^{*,n}|^2]^{1/2} \leq C(H,\gamma) \sqrt{\gamma(p)}$. Hence, during the first iterations (say $M/100$ if $M$ denotes the number of samples of the estimator), to ensure that the different procedures do not jump too far ahead in one step, we freeze the value of $\theta^{\sqrt{n}}_{p+1}$ (respectively $\theta^{m^{\ell}}_{p+1}$) and reset it to the value of the previous step as soon as $|\theta^{\sqrt{n}}_{p+1}-\theta^{\sqrt{n}}_{p}| \leq K/\sqrt{p}$ (respectively $|\theta^{m^{\ell}}_{p+1}-\theta^{m^{\ell}}_{p}| \leq K/\sqrt{p}$), for a pre-specified value of $K$. This is just an heuristic approach that notably prevents the algorithm from blowing up during the first steps of the procedure. We select $K=5$ in the different procedures. Note anyway that this projection-reinitialization step does not lead to additional bias but slightly increases the complexity of each procedures. In our numerical examples, we observe that it only represents around $1$-$2 \%$ of the total complexity.
 
Now let us interpret Figure \ref{Compl:RMSE}. The curves of the statical romberg SA and the multi-level SA methods are displaced below the curve of the SA method. Therefore, for a given error, the complexity of both methods are much lower than the one of the crude SA. The difference in terms of computational cost becomes more significant as the RMSE is small, which corresponds to large values of $n$. The difference between the statistical romberg and the multi-level SA method is not significant for small values of $n$, $i.e.$ for a RMSE between $1$ and $0.1$. For a RMSE lower than $5.10^{-2}$, which corresponds to a number of steps $n$ greater than about $600$-$700$, we observe that the multi-level SA procedure becomes much more effective than both methods. For a RMSE fixed around $1$ (which corresponds to $n=100$ for the SA algorithm and Statiscal Romberg SA), one divides the complexity by a factor of approximately $5$ by using the statistical romberg SA. For a RMSE fixed at $10^{-1}$, the computational cost gain is approximately equal to $10$ by using either the statistical romberg SA algorithm or the multi-level SA one. Finally, for a RMSE fixed at $5.5 . 10^{-2}$, the complexity gain achieved by using the multi-level SA procedure instead of the statistical romberg one is approximately equal to $5$. 
 
 The histograms of Fig \ref{Hist:SA-SR-ML:Strike} illustrates Theorems \ref{globalTCL}, \ref{TWO:LVL:SA} and \ref{MLVL:SA}. The distributions of $n(\theta^{n}_{\gamma^{-1}(1/n^2)}-\theta^*)$, $n(\Theta^{sr}_n-\theta^*)$ and $n(\Theta^{ml}_n-\theta^*)$, obtained with $n=4^{4}=256$ and $N=1000$ samples, are close to a normal distribution.

\begin{figure}[!ht]
\begin{center}
\includegraphics[width=8cm,height=8cm, keepaspectratio=true]{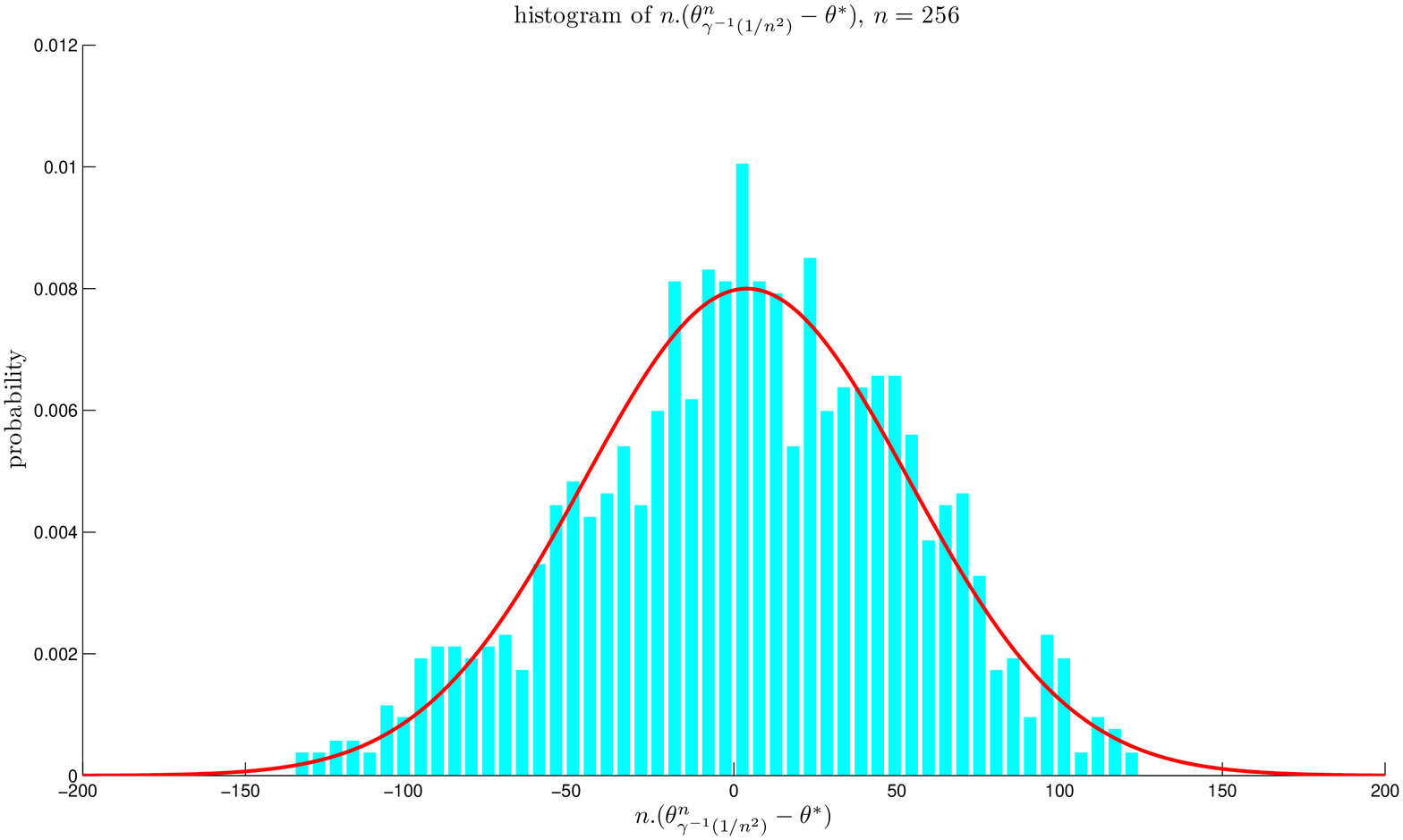}
\includegraphics[width=8cm,height=8cm, keepaspectratio=true]{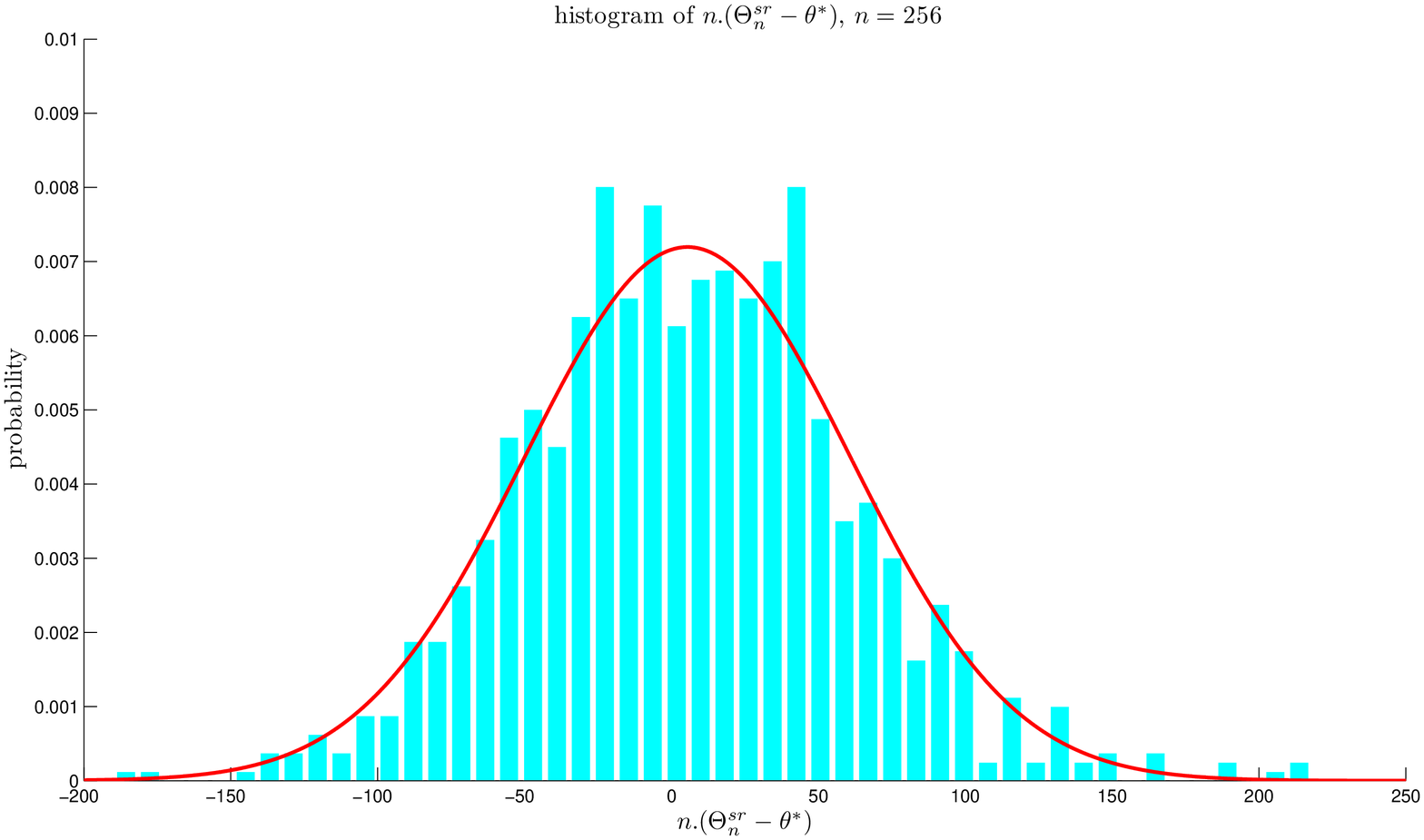}
\includegraphics[width=8cm,height=8cm, keepaspectratio=true]{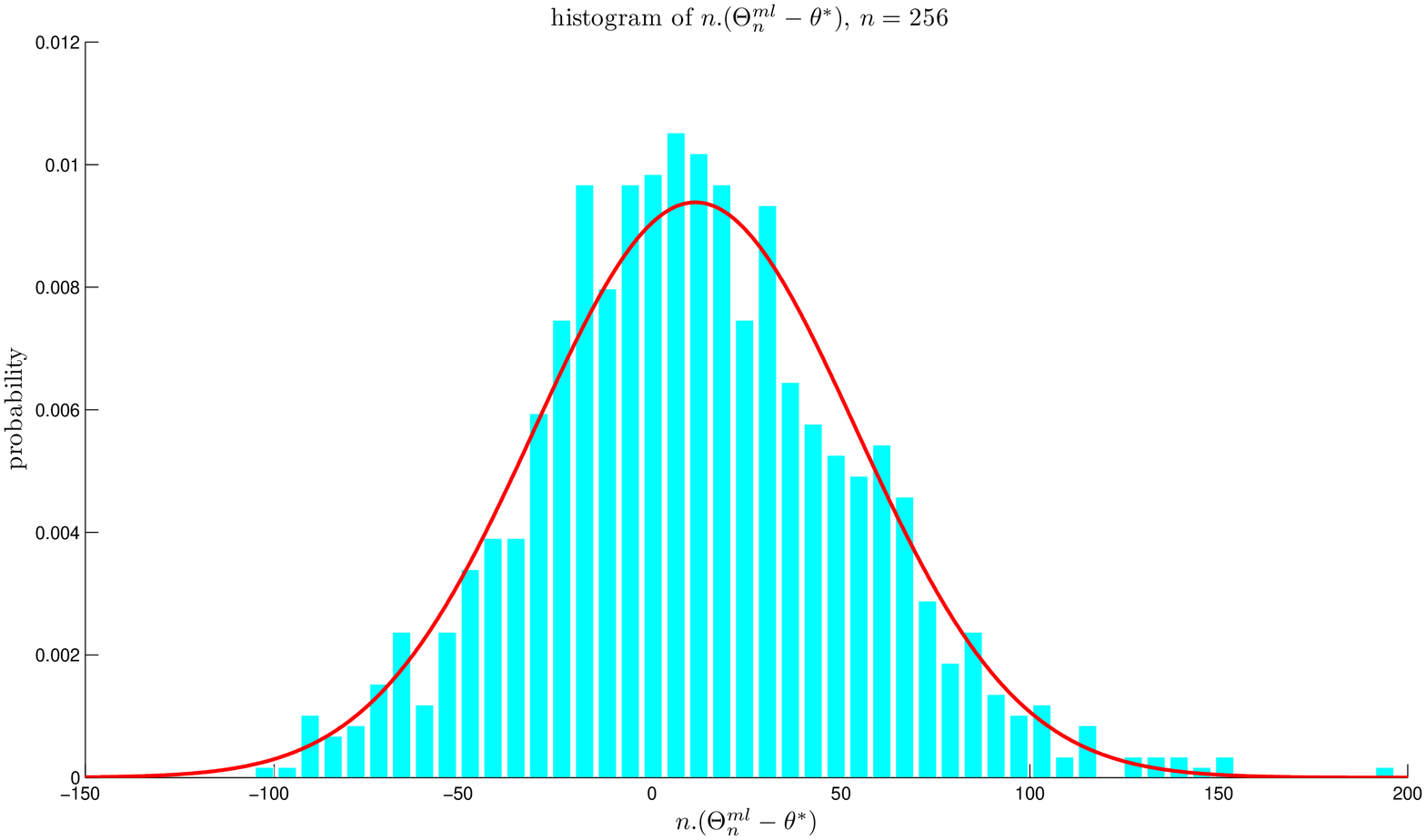}
\caption{Histograms of $n(\theta_{\gamma^{-1}(1/n^{2})}-\theta^*)$, $n(\Theta^{sr}_n-\theta^*)$ and $n(\Theta^{ml}_n-\theta^*)$ (from left to right), $n=256$, with $N=1000$ samples.}
\label{Hist:SA-SR-ML:Strike}
\end{center}
\end{figure}
\begin{figure}
\begin{center}
\hspace*{-6.5cm}
\includegraphics[width=30cm,height=15cm]{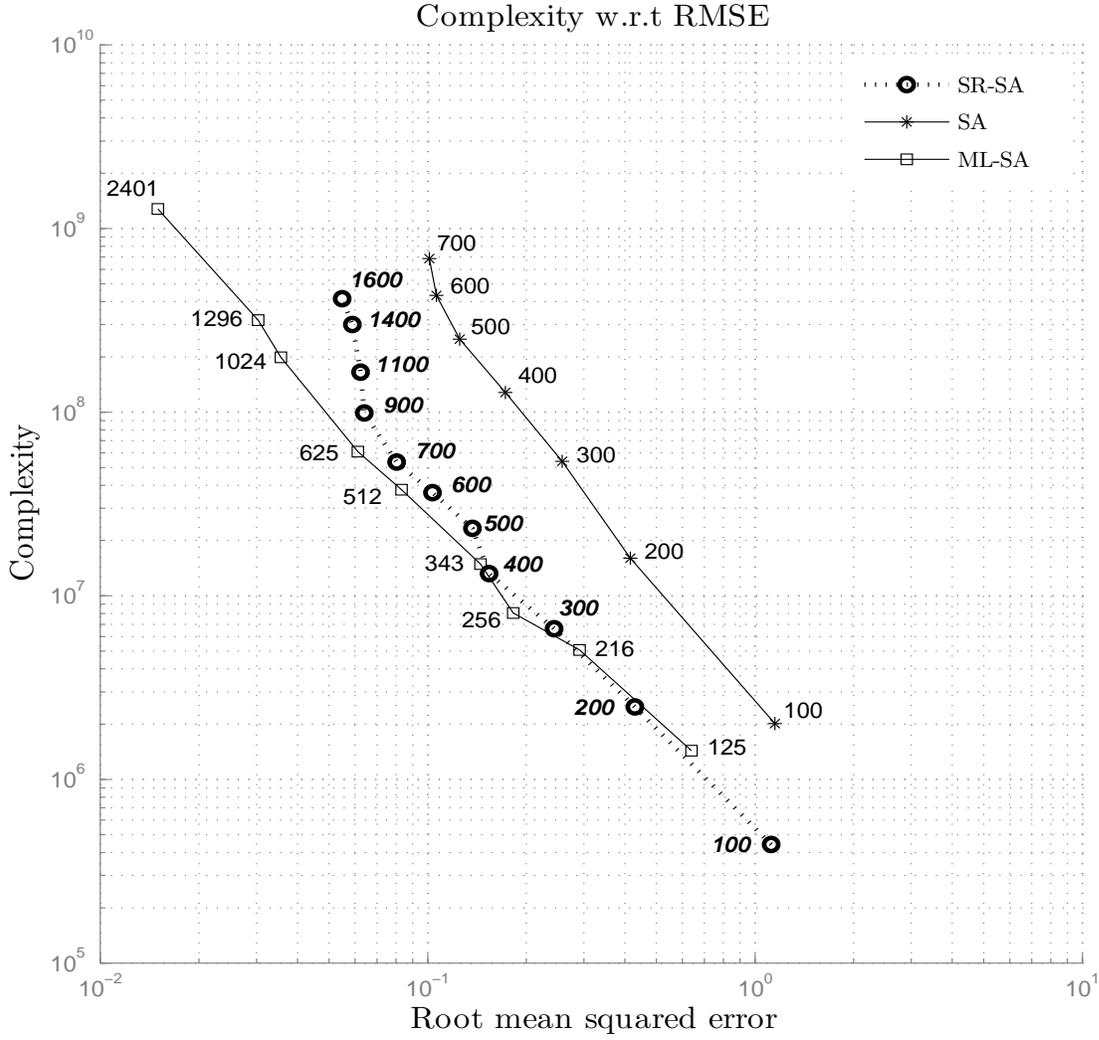}
\caption{Complexity with respect to RMSE.}
\label{Compl:RMSE}
\end{center}
 \end{figure}
 \begin{figure}
\begin{center}
\hspace*{-6.5cm}
\includegraphics[width=30cm,height=15cm]{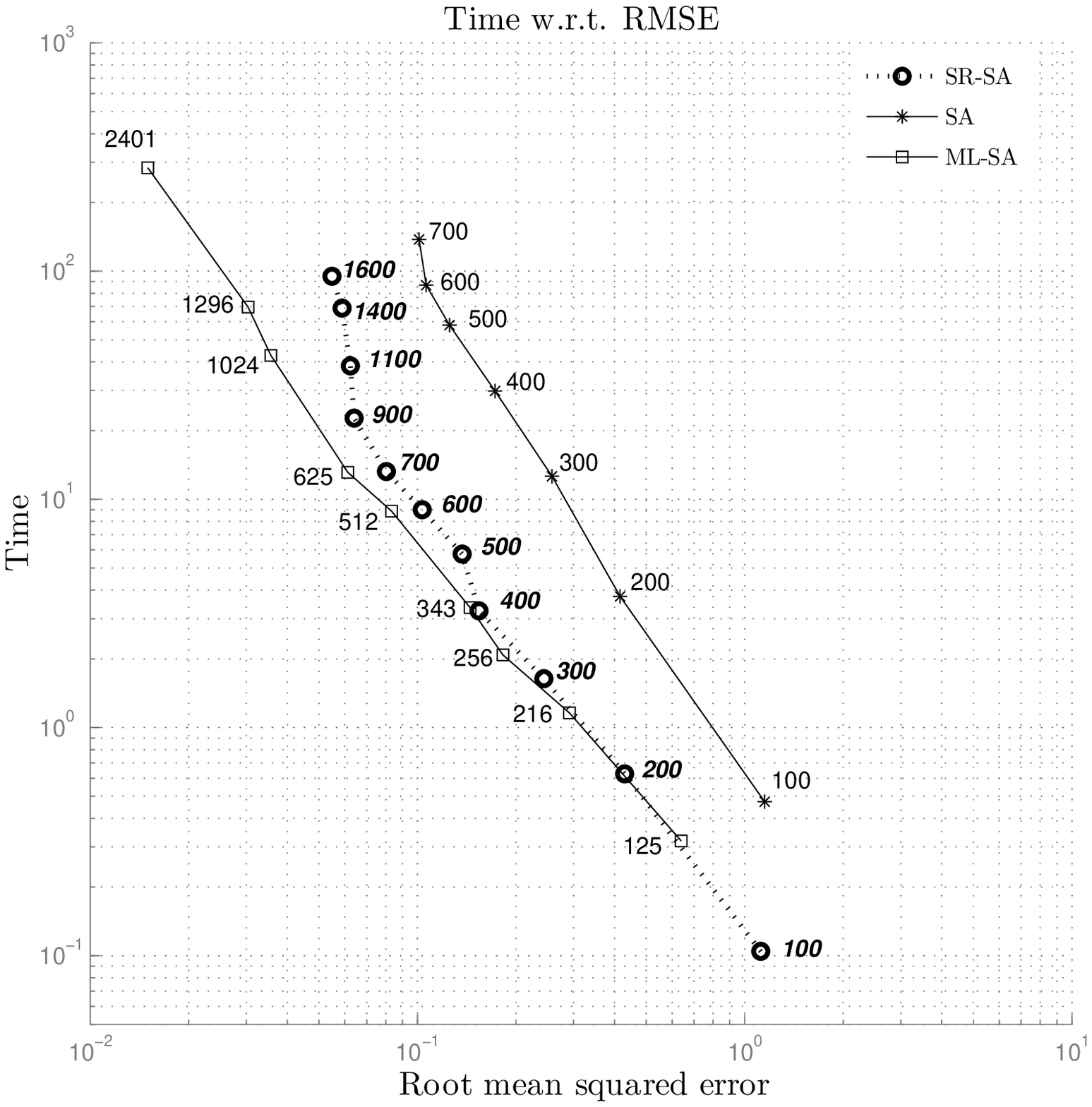}
\caption{Time in second (average time for one sample) with respect to RMSE.}
\label{Compl:RMSE}
\end{center}
 \end{figure}
 \section{Technical results}\label{technical:res:sec}
We provide here some useful technical results that are used repeatedly throughout the paper. When the exact value of a constant is not important we may repeat the same symbol for constants that may change from one line to next.

\begin{LEMME}
\label{stepseq:tech:lemme}
Let $H$ be a stable $d\times d$ matrix and denote by $\lambda_{min}$ its eigenvalue with the lowest real part. Let $(\gamma_n)_{n\geq1}$ be a sequence defined by $\gamma_n = \gamma(n)$, $n\geq1$, where $\gamma$ is a positive function defined on $[0,+\infty[$ decreasing to zero and such that $\sum_{n\geq1} \gamma(n)=+\infty$. Let $a,b>0$. We assume that $\gamma$ satisfies one of the following assumptions:
\begin{itemize}

\item $\gamma$ varies regularly with exponent $(-c)$, $c \in [0,1)$, that is for any $x>0$, $\lim_{t\rightarrow + \infty} \gamma(tx)/\gamma(t) = x^{-c}$. 

\item for $t\geq1$, $\gamma(t)=\gamma_0/t$ with $b \mathcal{R}e(\lambda_{min})\gamma_0>a$.

\end{itemize}

Let $(v_n)_{n\geq1}$ be a non-negative sequence. Then, for some positive constant $C$, one has
$$
\lim\sup_{n} \gamma^{-a}_{n} \sum_{k=1}^{n} \gamma^{1+a}_{k}  v_k \|\Pi_{k+1,n}\|^{b} \leq C \lim\sup_{n} v_n,
$$

\noindent where $\Pi_{k,n}:=\prod_{j=k}^{n}(I_d-\gamma_{j}H)$, with the convention $\Pi_{n+1,n}=I_d$.

\end{LEMME}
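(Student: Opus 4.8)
We must control the weighted sum $\gamma_n^{-a}\sum_{k=1}^n \gamma_k^{1+a} v_k \|\Pi_{k+1,n}\|^b$, where $\Pi_{k+1,n} = \prod_{j=k+1}^n(I_d - \gamma_j H)$ and $H$ is stable. The plan is to reduce everything to a scalar estimate using the exponential decay of $\|\Pi_{k+1,n}\|$, then compare the resulting sum to a Riemann-type integral governed by the regular variation (or explicit $\gamma_0/t$ form) of $\gamma$.

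**Step 1: exponential bound on the product.** First I would recall the standard estimate for stable $H$: for any $\eta \in (0,\mathcal{R}e(\lambda_{min}))$ there is $C>0$ with $\|\Pi_{k+1,n}\| \leq C\prod_{j=k+1}^n(1-\eta\gamma_j) \leq C\exp(-\eta\sum_{j=k+1}^n \gamma_j)$, valid once the $\gamma_j$ are small enough (so that $1-\eta\gamma_j \in (0,1)$), which holds for $k$ beyond some fixed index. Raising to the power $b$ gives $\|\Pi_{k+1,n}\|^b \leq C\exp(-b\eta\, s_{k+1,n})$ with $s_{k+1,n}:=\sum_{j=k+1}^n\gamma_j$. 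Under \textbf{(HS2)} ($\gamma_t=\gamma_0/t$) I would choose $\eta$ close enough to $\mathcal{R}e(\lambda_{min})$ that $b\eta\gamma_0 > a$ still holds, which is possible precisely because of the hypothesis $b\,\mathcal{R}e(\lambda_{min})\gamma_0 > a$; under the regularly-varying case any small $\eta>0$ works because $s_{k+1,n}\to\infty$ fast enough relative to $\gamma_n^{-a}$.

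**Step 2: splitting off the tail where $v_k$ is large.** Fix $\varepsilon>0$ and let $L:=\limsup_n v_n$. Choose $N_\varepsilon$ so that $v_k \leq L+\varepsilon$ for all $k\geq N_\varepsilon$. I would split the sum at $N_\varepsilon$. The head $\sum_{k\leq N_\varepsilon}\gamma_k^{1+a}v_k\|\Pi_{k+1,n}\|^b$ is a fixed finite sum multiplied by $\|\Pi_{k+1,n}\|^b$; since each such factor decays like $\exp(-b\eta\, s_{k+1,n})$ and, after multiplying by $\gamma_n^{-a}$, this tends to $0$ as $n\to\infty$ (using $b\eta\gamma_0>a$ in the harmonic case, or regular variation in the other case), the head contributes nothing to the $\limsup$. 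The tail is bounded by $(L+\varepsilon)\,\gamma_n^{-a}\sum_{k=N_\varepsilon}^n \gamma_k^{1+a}\|\Pi_{k+1,n}\|^b$.

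**Step 3: the core scalar estimate and conclusion.** The crux is therefore to show $\limsup_n \gamma_n^{-a}\sum_{k=1}^n \gamma_k^{1+a}\exp(-b\eta\,s_{k+1,n}) \leq C$ for a constant $C$ independent of everything. This is the main obstacle and is where the two cases on $\gamma$ genuinely differ. I would treat it by a comparison-to-integral argument: writing $\sum_{k}\gamma_k^{1+a}\exp(-b\eta s_{k+1,n})$ and recognizing $\gamma_k\,ds \approx ds_{k,n}$, the sum behaves like $\int_0^{s_{1,n}} \gamma(\cdot)^a e^{-b\eta u}\,du$ up to the regular-variation asymptotics of $\gamma$. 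In the harmonic case $\gamma_k=\gamma_0/k$ one has $s_{k+1,n}\sim \gamma_0\log(n/k)$, so the sum is essentially $\gamma_0^{1+a}\sum_k k^{-(1+a)}(k/n)^{b\eta\gamma_0}$, and the condition $b\eta\gamma_0>a$ makes the exponent $-(1+a)+b\eta\gamma_0 > -1+b\eta\gamma_0-a$ produce a sum dominated by its top terms $k\sim n$, yielding a bound of order $n^{-a}\sim\gamma_n^a/\gamma_0^a$; multiplying by $\gamma_n^{-a}$ gives a constant. In the regularly-varying case the analogous Abelian computation (using $\lim_t\gamma(tx)/\gamma(t)=x^{-c}$ and Karamata-type bounds on $\sum\gamma_k$) again yields $\gamma_n^a$ times a constant. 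Combining Steps 2 and 3 gives $\limsup_n \gamma_n^{-a}\sum_k\gamma_k^{1+a}v_k\|\Pi_{k+1,n}\|^b \leq C(L+\varepsilon)$, and letting $\varepsilon\downarrow 0$ finishes the proof. The delicate point throughout is keeping the constant $C$ uniform while choosing $\eta$ subject to the sharp threshold $b\,\mathcal{R}e(\lambda_{min})\gamma_0>a$ in \textbf{(HS2)}.
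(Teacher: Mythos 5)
Your proposal is correct and follows essentially the same route as the paper: the exponential bound $\|\Pi_{k+1,n}\|\leq C\exp(-\lambda\sum_{j=k+1}^n\gamma_j)$ from stability of $H$, splitting the sum at a large index so that only $\sup_{k>N}v_k$ (equivalently $\limsup_n v_n + \varepsilon$) survives, and a series--integral comparison giving the core bound $\gamma_n^{-a}\sum_k\gamma_k^{1+a}e^{-\lambda b(s_n-s_k)}=\mathcal{O}(1)$, with the harmonic case handled exactly as you do via $s_p=\gamma_0\log p + O(1)$ and the threshold $b\lambda\gamma_0>a$, and the regularly varying case via the Karamata-type integral estimate you sketch (the paper makes this explicit by splitting the integral at $x\,s(n)$ and using that $t\mapsto\gamma^{a}(s^{-1}(t))$ varies regularly with exponent $-ac/(1-c)$). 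The paper's recursive formulation $z_{n+1}=e^{-\lambda b\gamma_{n+1}}z_n+\gamma_{n+1}^{a+1}v_{n+1}$ is only a bookkeeping variant of your direct split.
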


\begin{proof} First, from the stability of $H$, for all $0<\lambda < \mathcal{R}e(\lambda_{min})$, there exists a positive constant $C$ such that for any $k\leq n$,  $\|\Pi_{k+1,n}\| \leq C \prod_{j=k}^{n} (1-\lambda \gamma_j)$. Hence, we have  $ \sum_{k=1}^{n} \gamma^{1+a}_{k} v_k \|\Pi_{k+1,n}\|^{b} \leq C\sum_{k=1}^{n} \gamma^{1+a}_{k} v_k e^{-\lambda b (s_n-s_k)}$, $n\geq1$, with $s_n:=\sum_{k=1}^{n} \gamma_k$. We set $z_n:=\sum_{k=1}^{n} \gamma^{1+a}_{k} v_k e^{-\lambda b (s_n-s_k)}$. It can written in the recursive form
$$
z_{n+1} = e^{-\lambda b \gamma_{n+1}} z_n + \gamma^{a+1}_{n+1} v_{n+1}, \ n \geq 0.
$$

 Hence, a simple induction shows that for any $n>N$, $N\in \N^{*}$ 
\begin{align*}
z_{n} & = z_{N} \exp(-\lambda b (s_{n}-s_{N})) + \exp(-\lambda b s_{n}) \sum_{k=N+1}^{n} \exp(\lambda b s_{k}) \gamma^{a+1}_{k} v_{k} \\
& \leq z_{N} \exp(-\lambda b (s_{n}-s_{N})) + \left(\sup_{k>N} v_{k}\right) \exp(-\lambda b s_{n}) \sum_{k=N+1}^{n} \exp(\lambda b s_k) \gamma^{a+1}_k.
\end{align*}

\noindent We study now the impact of the step sequence $(\gamma_p)_{p\geq1}$ on the above estimate. We first assume that $\gamma_p= \gamma_0/p$ with $ b\mathcal{R}e(\lambda_{min}) \gamma_0 > a$. We select $\lambda>0$ such that $b\mathcal{R}e(\lambda_{min}) \gamma_0> b\lambda \gamma_0 > a$. Then, one has $s_{p}=\gamma_0 \log(p) + c_{1} + r_{p}$, $c_{1}>0$ and $r_{p}\rightarrow 0$ so that a comparison between the series and the integral yields
$$
 \exp(-\lambda b s_{n}) \sum_{k=N+1}^{n} \exp(\lambda b s_k) \gamma^{a+1}_k \leq C \frac{1}{n^{b \lambda \gamma_0}}\sum_{k=N+1}^{n} \frac{1}{k^{a-b\lambda \gamma_0 +1}} \leq \frac{C}{n^{a}}
$$

\noindent for some positive constant $C$ (independent of $N$) so that we clearly have
$$
\lim\sup_{n} \gamma^{-a}_n z_{n+1} \leq  C \sup_{k>N} v_{k}.
$$

\noindent and we conclude by passing to the limit $N\rightarrow + \infty$. 

We now assume that $\gamma$ varies regularly with exponent $-c$, $c \in [0,1)$. Let $s(t)=\int_0^t \gamma(s) ds$. We have 
\begin{align*}
\exp(-\lambda b s_n) \sum_{k=N}^{n} \exp(\lambda b s_{k}) \gamma^{a+1}_{k+1} & \sim \exp(-\lambda b s(n)) \int_0^{n} \exp(\lambda b s(t)) \gamma^{a+1}(t) dt \\
& \sim  \exp(-\lambda b s(n) ) \int_0^{s(n)} \exp(\lambda b t) \gamma^{a}(s^{-1}(t)) dt,
\end{align*}

\noindent so that for any $x$ such that $0<x<1$, since $t\mapsto \gamma^{a}(s^{-1}(t))$ is decreasing, we deduce
\begin{align*}
\int_0^{s(n)} \exp(\lambda b t) \gamma^{a}(s^{-1}(t)) dt & \leq \gamma^{a}(s^{-1}(0)) \int_0^{x s(n)} \exp(\lambda b t) dt + \gamma^{a}(s^{-1}(xs(n))) \int_{xs(n)}^{s(n)} \exp(\lambda b t) dt \\
& \leq \frac{\gamma^{a}(s^{-1}(0))}{\lambda b} \exp(\lambda b xs(n)) + \frac{\gamma^{a}(s^{-1}(xs(n)))}{\lambda b} \exp(\lambda b s(n)). 
\end{align*}

Hence it follows that
$$
\frac{\exp(-\lambda b s(n))}{\gamma^{a}(n)} \int_0^{s(n)} \exp(\lambda b t) \gamma^{a+1}(t) dt  \leq  \frac{\gamma(s^{-1}(0))}{\lambda \gamma^{a}(n)} \exp(-\lambda b(1-x) s(n)) +  \frac{\gamma^{a}(s^{-1}(xs(n)))}{\lambda b \gamma^{a}(n)},
$$

\noindent and since $t\mapsto \gamma^{a}(s^{-1}(t))$ varies regular with exponent $-a c /(1-c)$, and $\lim_{n \rightarrow + \infty} \frac{1}{\gamma^{a}(n)} \exp(-\lambda(1-x) s(n))=0$,
$$
\limsup_{n\rightarrow + \infty} \frac{\exp(-\lambda b s(n))}{\gamma^{a}(n)} \int_0^{s(n)} \exp(\lambda b t) \gamma^{a+1}(t) dt  \leq \frac{x^{-ac/(1-c)}}{\lambda b}.
$$

\noindent An argument similar to the previous case concludes the proof. 

\end{proof}

\begin{LEMME}
\label{sstrongerror:tech:lemme}
Let $(\theta^{n}_{p})_{p\geq0}$ be the procedure defined by \eqref{RM} where $\theta^{n}_0$ is independent of the innovation of the algorithm with $\sup_{n\geq1}\E|\theta^{n}_0|^2<+\infty$. Suppose that the assumption of theorem \ref{thm:conv:disc} are satisfied and that the mean-field function $h^{n}$ satisfies
\begin{equation}
\label{strongmeanrevert:assump}
\exists \underline{\lambda} >0, \ \forall n\in \N^{*}, \ \forall \theta \in \R^d, \ \langle  \theta-\theta^{*,n}, h^{n}(\theta) \rangle  \geq \underline{\lambda} |\theta-\theta^{*,n}|^2,
\end{equation}

\noindent where $\theta^{*,n}$ is the unique zero of $h^{n}$ satisfying $\sup_{n\geq1} |\theta^{*,n}| <+\infty$. Moreover, we assume that $\gamma$ satisfies one of the following assumptions:
\begin{itemize}

\item $\gamma$ varies regularly with exponent $(-c)$, $c \in [0,1)$, that is for any $x>0$, $\lim_{t\rightarrow + \infty} \gamma(tx)/\gamma(t) = x^{-c}$. 

\item for $t\geq1$, $\gamma(t)=\gamma_0/t$ with $2 \underline{\lambda} \gamma_0>1$.

\end{itemize}

Then, for some positive constant $C$ (independent of $p$ and $n$) one has:
$$
\forall p \geq 1, \ \ \sup_{n\geq1}\E[|\theta^{n}_p-\theta^{*,n}|^2] + \E[|\theta_p-\theta^*|^2] \leq C \gamma(p).
$$
\end{LEMME}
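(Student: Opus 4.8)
The plan is to derive a recursive inequality for the squared $L^2$-error $u^n_p := \E[|\theta^n_p - \theta^{*,n}|^2]$ and then to extract the rate $\gamma(p)$ from it by means of Lemma \ref{stepseq:tech:lemme}. First I would establish a uniform (in $n$) linear growth bound on the second moment of $H$: applying \A{HR} with $\theta' = \theta^{*,n}$, together with the local uniform $(2+\delta)$-moment control of \A{HI} and $\sup_{n}|\theta^{*,n}|<+\infty$ (so that $\sup_n\E[|H(\theta^{*,n},U^n)|^2]<+\infty$), and using $2b\leq 2$, one obtains a constant $C_1$, independent of $n$, with
$$
\E[|H(\theta, U^n)|^2] \leq C_1(1 + |\theta - \theta^{*,n}|^2), \quad \forall \theta \in \R^d,\ \forall n\geq1.
$$

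Next, squaring the recursion \eqref{RM}, conditioning on $\mathcal{F}_p$ and using that $(U^n)^{p+1}$ is independent of $\mathcal{F}_p$, the cross term equals $\langle \theta^n_p - \theta^{*,n}, h^n(\theta^n_p)\rangle \geq \underline{\lambda}|\theta^n_p - \theta^{*,n}|^2$ by the strong mean-reverting property \eqref{strongmeanrevert:assump}, while the quadratic term is handled by the growth bound above. Taking expectations yields, with $a_j := 1 - 2\underline{\lambda}\gamma_j + C_1\gamma_j^2$,
$$
u^n_{p+1} \leq a_{p+1}\,u^n_p + C_1\gamma_{p+1}^2, \qquad u^n_0 \leq 2\sup_{n}\E|\theta^n_0|^2 + 2\sup_{n}|\theta^{*,n}|^2 < +\infty.
$$
An immediate induction gives $u^n_p<+\infty$ for all $p$, and unrolling produces
$$
u^n_p \leq u^n_0 \prod_{j=1}^p a_j + C_1\sum_{k=1}^p \gamma_k^2\prod_{j=k+1}^p a_j.
$$

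The rate is then extracted by reducing to Lemma \ref{stepseq:tech:lemme}. Since $\gamma_j\to 0$, for any $\lambda < 2\underline{\lambda}$ there is $N_0$ with $a_j \leq 1 - \lambda\gamma_j$ for $j\geq N_0$; in the case $\gamma(t)=\gamma_0/t$ I would choose $\lambda$ so that moreover $\lambda\gamma_0 > 1$, which is possible \emph{precisely} because $2\underline{\lambda}\gamma_0 > 1$. Comparing termwise and absorbing the finitely many factors $j<N_0$ into a constant, $\prod_{j=k+1}^p a_j \leq C\prod_{j=k+1}^p(1-\lambda\gamma_j)$. Applying Lemma \ref{stepseq:tech:lemme} with $d=1$, $H=\lambda$, $a=b=1$ and $v_k\equiv 1$ gives $\sum_{k=1}^p \gamma_k^2\prod_{j=k+1}^p(1-\lambda\gamma_j) = O(\gamma_p)$; meanwhile $\prod_{j=1}^p a_j \leq C\exp(-\lambda(s_p-s_{N_0}))$ with $s_p=\sum_{j\leq p}\gamma_j\to+\infty$ by \eqref{STEP}, which is $o(\gamma_p)$ (of order $p^{-\lambda\gamma_0}$ with $\lambda\gamma_0>1$ in the harmonic case, and faster than any power in the regularly varying case). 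Combining these gives $u^n_p \leq C\gamma_p$ for $p$ large with $C$ depending only on $\underline{\lambda}$, $C_1$, $\gamma_0$ and the uniform initial bound; enlarging $C$ to absorb the finitely many small $p$ yields the claim for $\theta^n_p$.

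Finally, the bound for $(\theta_p)$ follows from the identical argument applied to $h$ and $\theta^*$ in place of $h^n$ and $\theta^{*,n}$: $h$ inherits the strong mean-reverting inequality \eqref{strongmeanrevert:assump} with the same $\underline{\lambda}$ by passing to the pointwise limit $n\to+\infty$ (using $h^n(\theta)\to h(\theta)$ and $\theta^{*,n}\to\theta^*$, as in Remark \ref{eigenvalue:rem}), and the growth control passes to the limit as well, so the same recursion gives $\E[|\theta_p-\theta^*|^2]\leq C\gamma_p$. The main obstacle here is not any single computation but the simultaneous control of the rate \emph{and} of the uniformity in $n$: every constant ($\underline{\lambda}$, $C_1$, the initial bound) must be traced to be $n$-independent, and the threshold $2\underline{\lambda}\gamma_0>1$ in the harmonic regime is exactly what makes Lemma \ref{stepseq:tech:lemme} applicable with $a=b=1$, i.e.\ what guarantees the optimal decay $\gamma_p$ of the error rather than a slower one.
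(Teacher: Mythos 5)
Your proof is correct and follows essentially the same route as the paper: the same expansion of $|\theta^{n}_{p+1}-\theta^{*,n}|^2$ with the martingale cross term, the same recursive inequality with contraction factor $1-2\underline{\lambda}\gamma_{p+1}+C\gamma^{2}_{p+1}$, the same unrolled product bound, and the same reduction to Lemma \ref{stepseq:tech:lemme}, with the constraint $2\underline{\lambda}\gamma_0>1$ playing exactly the role you identify. The only (minor) divergence is in the bound for $(\theta_p)_{p\geq1}$: you transfer \eqref{strongmeanrevert:assump} to $h$ by a pointwise passage to the limit in $n$ on all of $\R^d$, whereas the paper invokes the a.s.\ convergence of $(\theta_p)_{p\geq1}$ to $\theta^*$ together with a mean-reverting property of $h$ on a compact set via Remark \ref{eigenvalue:rem} --- your global limiting argument is, if anything, the more direct of the two.
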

\begin{proof} From the dynamic of $(\theta^{n}_{p})_{p\geq1}$, we have
\begin{align*}
|\theta^{n}_{p+1}-\theta^{*,n}|^2 & = |\theta^{n}_{p} - \theta^{*,n}|^2 - 2 \gamma_{p+1}  \langle \theta^{n}_p-\theta^{*,n}, h^{n}(\theta^{n}_p) \rangle + 2 \gamma_{p+1} \langle \theta^{n}_p-\theta^{*,n},  \Delta M^{n}_{p+1} \rangle  \\
& + \gamma^{2}_{p+1} |H(\theta^{n}_p, (X^{n}_T)^{p+1})|^2,
\end{align*}

\noindent so that  taking expectation in the previous equality and using assumptions \eqref{Growth_Cond} and \eqref{strongmeanrevert:assump}, we easily derive
 $$
\E |\theta^{n}_{p+1}-\theta^{*,n}|^2 \leq (1-2\underline{\lambda}\gamma_{p+1} + C \gamma^{2}_{p+1}) \E|\theta^{n}_{p} - \theta^{*,n}|^2  + C \gamma^{2}_{p+1}.
 $$

Now a simple induction argument yields
\begin{align*}
\E |\theta^{n}_{p}-\theta^{*,n}|^2 \leq \E|\theta^{n}_0 - \theta^{*,n}|^2 \Pi_{1,p} + \sum_{k=1}^{p} \Pi_{k+1,p} \gamma^{2}_{k}
\end{align*}

\noindent where we set $\Pi_{k,p}:=\prod_{j=k}^{p}(1-2 \underline{\lambda} \gamma_{j} + C \gamma^{2}_{j})$ for sake of simplicity. Moreover, computations similar to the proof of Lemma \ref{stepseq:tech:lemme} imply
$$
\forall p \geq1, \ \ \E |\theta^{n}_{p}-\theta^{*,n}|^2 \leq C \gamma(p).
$$

In order to prove the similar bound for the sequence $(\theta_p)_{p\geq1}$ we first observe that since $(\theta_p)_{p\geq1}$ converges $a.s.$ to $\theta^*$ there exists a compact set $K$ (which depends on $w$) such that $\theta_p \in K$, for $p\geq0$. Then, Remark 2.3 shows that a mean reverting assumption is satisfied also for $h$ on $K$ with the same constant $\underline{\lambda}$. Finally we conclude using similar arguments as those used above. 
\end{proof}

\begin{PROP}\label{dec:aver:step}Assume that the assumptions of Theorem \ref{TWO:LVL:SA:AV} are satisfied. Then, for all $n \in \N$ there exist two sequences $(\tilde{\mu}^{n}_{p})_{p \in \leftB0,n\rightB}$ and $(\tilde{r}^{n}_p)_{p\in \leftB 0,n \rightB}$ with $\tilde{r}^{n}_0 = \theta^{n}_0 -\theta_0 - (\theta^* - \theta^{*,n})$ such that
$$
\forall p \in \leftB0,n\rightB, \ \ z^{n}_p = \theta^{n}_p - \theta^{*,n} - (\theta_p - \theta^*) = \tilde{\mu}^{n}_p + \tilde{r}^{n}_{p} 
$$

\noindent and satisfying for all $n \in \N$, for all $p \in \leftB 1,n \rightB$
$$
\sup_{p\geq1} \gamma^{-1/2}_{p} \E|\tilde{\mu}^{n}_p| < C n^{-\rho}, \ \  \ \sup_{n\geq1, p\geq0}\gamma^{-1}_{p}\E[ |\tilde{r}^{n}_{p}|] < +\infty. 
$$

%

\end{PROP}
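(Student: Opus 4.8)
The plan is to reproduce, at the level of $L^{1}(\P)$-magnitudes, the recursive linearisation already used in the proof of Lemma \ref{conv:lem:2step:sr}. First I would subtract the dynamics of $(\theta_{p})_{p\geq0}$ from those of $(\theta^{n}_{p})_{p\geq0}$, linearise each around its own equilibrium ($\theta^{*}$ and $\theta^{*,n}$), and solve the resulting affine recursion for $z^{n}_{p}=\theta^{n}_{p}-\theta^{*,n}-(\theta_{p}-\theta^{*})$ by means of the resolvent $\Pi_{k,p}=\prod_{j=k}^{p}(I_{d}-\gamma_{j}Dh(\theta^{*}))$. This yields exactly the four-group representation \eqref{rec:two:level:sr}: the transported initial error $\Pi_{1,p}z^{n}_{0}$; the martingale driven by the innovation nonlinearity $\Delta N^{n}_{k}$; the martingale driven by the space nonlinearity $\Delta R^{n}_{k}$; and the bias sum gathering the quadratic remainders $\zeta^{n}_{k-1}-\zeta_{k-1}$ together with the Jacobian discrepancy $(Dh(\theta^{*})-Dh^{n}(\theta^{*,n}))(\theta^{n}_{k-1}-\theta^{*,n})$. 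I would then \emph{define} $\tilde{\mu}^{n}_{p}$ to collect precisely the contributions whose $L^{1}$-size is of order $n^{-\rho}\gamma^{1/2}_{p}$ and $\tilde{r}^{n}_{p}$ to collect those of order $\gamma_{p}$, so that $z^{n}_{p}=\tilde{\mu}^{n}_{p}+\tilde{r}^{n}_{p}$ and $\tilde{r}^{n}_{0}$ is the initial discrepancy.

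For the remainder $\tilde{r}^{n}_{p}$ I would route in the transported initial error, the $\zeta$-remainder sum, and the part of the space martingale measured against the correct equilibria. The initial error obeys $\|\Pi_{1,p}\|\,\E|z^{n}_{0}|\leq C\|\Pi_{1,p}\|$ uniformly in $n$ (using $\sup_{n}\E|\theta^{n}_{0}|^{2}<+\infty$ and the boundedness of $\theta^{*,n}$), and the geometric decay of $\|\Pi_{1,p}\|$ dominates $\gamma_{p}$, so $\|\Pi_{1,p}\|/\gamma_{p}\to0$. The $\zeta$-term is handled through $\E|\zeta^{n}_{k-1}-\zeta_{k-1}|\leq C\gamma_{k}$ (the Taylor remainders being quadratic in the deviation, via Lemma \ref{sstrongerror:tech:lemme}) and Lemma \ref{stepseq:tech:lemme} with $a=1,\ b=1$, giving $\sum_{k}\gamma^{2}_{k}\|\Pi_{k+1,p}\|=O(\gamma_{p})$. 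The ``clean'' part of the space martingale is controlled by $\E|\theta^{n}_{k}-\theta^{*,n}|^{2}\leq C\gamma_{k}$ and $\E|\theta_{k}-\theta^{*}|^{2}\leq C\gamma_{k}$ (Lemma \ref{sstrongerror:tech:lemme}, where \A{HMR} enters) together with \A{HLH}, and Lemma \ref{stepseq:tech:lemme} with $a=2,\ b=2$ then gives $\sum_{k}\gamma^{3}_{k}\|\Pi_{k+1,p}\|^{2}=O(\gamma^{2}_{p})$, i.e. an $L^{2}$-size of order $\gamma_{p}$. Summing yields $\sup_{n,p}\gamma^{-1}_{p}\E|\tilde{r}^{n}_{p}|<+\infty$.

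For $\tilde{\mu}^{n}_{p}$ I would collect the innovation martingale, the Jacobian discrepancy, and the ``equilibrium-gap'' component of the space martingale. For the innovation part, martingale orthogonality gives $\E|\cdot|^{2}=\sum_{k}\gamma^{2}_{k}\|\Pi_{k+1,p}\|^{2}\E|\Delta N^{n}_{k}|^{2}$, and \A{HSR}, \A{HLH} (with the $x$-smoothness \A{HDH} for the underlying Taylor expansion) give $\E|\Delta N^{n}_{k}|^{2}\leq Cn^{-2\rho}$; Lemma \ref{stepseq:tech:lemme} with $a=1,\ b=2$ then produces $Cn^{-\rho}\gamma^{1/2}_{p}$. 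For the Jacobian term, Lemma \ref{sstrongerror:tech:lemme} gives $\E|\theta^{n}_{k}-\theta^{*,n}|\leq C\gamma^{1/2}_{k}$, while the Jacobian weak-convergence hypothesis of Theorem \ref{TWO:LVL:SA:AV} combined with $|\theta^{*,n}-\theta^{*}|=O(n^{-\alpha})$, $\alpha\geq\rho$, and the uniform Lipschitz continuity of $Dh^{n}$ yields $\|Dh(\theta^{*})-Dh^{n}(\theta^{*,n})\|\leq Cn^{-\rho}$; Lemma \ref{stepseq:tech:lemme} with $a=1/2,\ b=1$ bounds $\sum_{k}\gamma^{3/2}_{k}\|\Pi_{k+1,p}\|=O(\gamma^{1/2}_{p})$, again giving $Cn^{-\rho}\gamma^{1/2}_{p}$. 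The equilibrium-gap component of the space martingale is of order $|\theta^{*,n}-\theta^{*}|\,\gamma^{1/2}_{p}=O(n^{-\alpha}\gamma^{1/2}_{p})\leq O(n^{-\rho}\gamma^{1/2}_{p})$. Adding the three estimates gives $\sup_{p}\gamma^{-1/2}_{p}\E|\tilde{\mu}^{n}_{p}|<Cn^{-\rho}$.

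The main obstacle is precisely the recognition that the space-nonlinearity martingale and the Jacobian term are \emph{not} purely $O(\gamma_{p})$: each carries a component proportional to the equilibrium gap $\theta^{*,n}-\theta^{*}$ (respectively the Jacobian gap $Dh(\theta^{*})-Dh^{n}(\theta^{*})$) whose $L^{1}$-size is $n^{-\rho}\gamma^{1/2}_{p}$, too large for $\tilde{r}^{n}_{p}$ yet perfectly admissible for $\tilde{\mu}^{n}_{p}$. Carrying out this routing cleanly --- by re-centring the $H$-increments at $\theta^{*,n}$ rather than $\theta^{*}$ so that the two scales separate --- and then checking that the gaps are genuinely $O(n^{-\rho})$ is the crux: this is where $\alpha\geq\rho$ and the hypothesis \eqref{weak:conv:jacob} are used, the latter read with the correct bias parameter $n^{\beta}$, where $n^{\alpha-(\alpha-\rho\beta)a}\|Dh-Dh^{n^{\beta}}\|\to0$ forces $\|Dh(\theta^{*})-Dh^{n^{\beta}}(\theta^{*,n^{\beta}})\|\leq Cn^{-\rho\beta}$ because $(\alpha-\rho\beta)(1-a)>0$. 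All the remaining estimates are uniform simultaneously in $n$ and in $p$ thanks to the uniform-in-$n$ control furnished by \A{HMR} through Lemma \ref{sstrongerror:tech:lemme} and the resolvent bounds of Lemma \ref{stepseq:tech:lemme}.
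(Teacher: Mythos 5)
Your proposal is correct and takes essentially the same route as the paper's own proof: the identical decomposition \eqref{rec:two:level:sr}, the same routing of terms into $\tilde{\mu}^{n}_{p}$ (innovation martingale, Jacobian discrepancy, equilibrium-gap part of the space martingale obtained by re-centring the $H$-increments at $\theta^{*,n}$) and $\tilde{r}^{n}_{p}$ (transported initial error, $\zeta$-remainders, re-centred space martingale), with the same estimates through Lemmas \ref{sstrongerror:tech:lemme} and \ref{stepseq:tech:lemme}. Your reading of the Jacobian hypothesis \eqref{weak:conv:jacob} with the bias parameter $n^{\beta}$, giving $\|Dh(\theta^{*})-Dh^{n^{\beta}}(\theta^{*})\|=o(n^{-\rho\beta})$ since $(\alpha-\rho\beta)(1-a)>0$, correctly fills in a point the paper leaves implicit.
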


\begin{proof}
Using \eqref{rec:two:level:sr}, we define the two sequences $(\tilde{\mu}^{n}_{p})_{p \in \leftB0,n\rightB}$ and $(\tilde{r}^{n}_p)_{p\in \leftB 0,n \rightB}$ by
\begin{align*}
\tilde{\mu}^{n}_p & =  \sum_{k=1}^{p} \gamma_k \Pi_{k+1,p} \Delta N^{n}_k + \sum_{k=1}^{p} \gamma_k \Pi_{k+1,p} (Dh(\theta^{*})-Dh^{n}(\theta^{*,n}))(\theta^{n}_{k-1}-\theta^{*,n}) \\
& +  \sum_{k=1}^{p} \gamma_k  \Pi_{k+1, n} ( h^{n}(\theta^{*,n})-h^{n}(\theta^{*}) - (H(\theta^{*,n},(U^{n})^{k+1})-H(\theta^{*},(U^{n})^{k+1}))  )
\end{align*}

\noindent and
\begin{align*}
\tilde{r}^{n}_{p} & = \Pi_{1,p} z^{n}_0 + \sum_{k=1}^{p} \gamma_k \Pi_{k+1,p} (\zeta^{n}_{k-1} - \zeta_{k-1}) +  \sum_{k=1}^{p} \gamma_k  \Pi_{k+1, p} ( h^{n}(\theta^{n}_k)-h^{n}(\theta^{*,n}) - (H(\theta^{n}_k,(U^{n})^{k+1})-H(\theta^{*,n},(U^{n})^{k+1}))   ) \\
& + \sum_{k=1}^{p} \gamma_k  \Pi_{k+1, p} ( H(\theta_k, U^{k+1}) - H(\theta^*, U^{k+1}) - (h(\theta_k) - h(\theta^*) ) ).
\end{align*}

We first focus on the sequence $(\tilde{\mu}^{n}_p)_{p\in \leftB 0, n \rightB}$. Moreover, by the definition of the sequence $(\Delta N^{n}_{k})_{k \in \leftB 1, n \rightB}$ and the Cauchy-Schwarz inequality we derive
$$
\E \left| \sum_{k=1}^{p} \gamma_k \Pi_{k+1,p} \Delta N^{n}_k \right| \leq C  (\E|H(\theta^*,U^{n}) - H(\theta^*, U)|^2)^{1/2} (\sum_{k=1}^{p} \gamma^{2}_k \|\Pi_{k+1,p}\|^2)^{1/2} = \O( \gamma^{1/2}_p  n^{-\rho}).
$$

Taking the expectation for the third term and following the lines of the proof of Lemma \ref{conv:lem:2step}, we obtain
\begin{align*}
\E \left| \sum_{k=1}^{p} \gamma_k \Pi_{k+1,p} (Dh(\theta^{*})-Dh^{n}(\theta^{*,n}))(\theta^{n}_{k-1}-\theta^{*,n}) \right| &  \leq C \sum_{k=1}^{p} \gamma^{3/2}_k \|\Pi_{k+1,p}\| (|\theta^{*,n} - \theta^*| + \|Dh(\theta^*) - Dh^{n}(\theta^*)\|) \\
& = \O(\gamma^{1/2}_p n^{-\rho}).
 \end{align*}
 
Finally we take the square of the $L^{2}$-norm of the last term and use Lemma \ref{stepseq:tech:lemme} to derive
\begin{align*}
\E \left|\sum_{k=1}^{p} \gamma_k  \Pi_{k+1, p} ( h^{n}(\theta^{*,n})-h^{n}(\theta^{*}) - (H(\theta^{*,n},(X^{n}_T)^{k+1}) - H(\theta^{*},(X^{n}_T)^{k+1}))  ) \right|^2  & \leq |\theta^*-\theta^{*,n}|^2 \sum_{k=1}^{p} \gamma^{2}_k  \|\Pi_{k+1, p}\|^2 \\
& = \O(\gamma_p n^{-2\rho}).
\end{align*}

We now prove the bound concerning the sequence $(\tilde{r}^{n}_{p})_{p\in \leftB 0, n \rightB}$. Under the assumption on the step sequence we have
$$
\E[| \Pi_{1,p} z^{n}_{0}|] \leq \| \Pi_{1,p}\| (1 + | \theta^{*}-\theta^{*,n}|) = \O(\gamma_p).
$$

By Lemma \ref{sstrongerror:tech:lemme}, we derive
\begin{align*}
\sup_{n\geq1}\E\left| \sum_{k=1}^{p} \gamma_k \Pi_{k+1,p} (\zeta^{n}_{k-1} - \zeta_{k-1}) \right|  &  \leq C  \sum_{k=1}^{p} \gamma^{2}_k \|\Pi_{k+1,p}\|  = \O(\gamma_p).
\end{align*}


Concerning the second term, following the lines of the proof of Lemma \ref{conv:lem:2step} we simply take the square of its $L^2(\P)$-norm to derive
\begin{align*}
\sup_{n\geq1} \E \left|  \sum_{k=1}^{p} \gamma_k  \Pi_{k+1, p} ( h^{n}(\theta^{n}_k)-h^{n}(\theta^{*,n}) - (H(\theta^{n}_k,(X^{n}_T)^{k+1})-H(\theta^{*,n},(X^{n}_T)^{k+1}))   ) \right|^2 & \leq C  \sum_{k=1}^{p} \gamma^3_k  \|\Pi_{k+1, p}\|^2  \\
& = \O(\gamma^{2}_p)
\end{align*}

\noindent and similarly $ \E \left|  \sum_{k=1}^{p} \gamma_k  \Pi_{k+1, p} \left( (H(\theta_k,(X_T)^{k+1})-H(\theta^{*},(X_T)^{k+1}))   - (h(\theta_k) - h(\theta^{*})) \right) \right|^2  = \O(\gamma^{2}_p)$.

%
%
\end{proof}

\bibliographystyle{alpha}
\bibliography{bibli}

 \end{document}